\numberwithin{equation}{section}
\theoremstyle{plain}
\newtheorem{thm}{Theorem}[section]
\newtheorem{thma}{Theorem}
\newtheorem{lem}{Lemma}[section]
\newtheorem{rmk}{Remark}
\newtheorem{prop}{Proposition}
\newtheorem{clm}[lem]{Claim}
\newtheorem{obs}[lem]{Observation}
\newtheorem*{obs*}{Observation}
\newtheorem*{cor*}{Corollary}
\newcommand{\ds}{\displaystyle}
\newcommand{\norm}[1]{\left\Vert#1\right\Vert}
\newcommand{\wh}{\widehat}
\newcommand{\sinc}{\mathrm{sinc}}
\newcommand{\sgn}{\mathrm{sgn}\,}
\newcommand{\FF}{{\mathcal F}}
\newcommand{\R}{\mathbb R}
\newcommand{\E}{\mathbb E}
\newcommand{\N}{\mathbb N}
\newcommand{\Z}{\mathbb Z}
\newcommand{\Pro}{\mathbb P}
\newcommand{\lm}{\lambda}
\newcommand{\si}{\sigma}
\newcommand{\g}{\gamma}
\newcommand{\al}{\alpha}
\newcommand{\ep}{\varepsilon}
\newcommand{\p}{\varphi}
\newcommand{\var}{\mathrm{var}\,}
\newcommand{\sprt}{\mathrm{sprt}\,}
\newcommand{\ind}{1{\hskip -2.5 pt}\hbox{I}}
\newcommand{\D}{\Delta}
\newcommand{\calN}{\mathcal{N}}
\newcommand{\subjclass}[2][1991]{%
  \let\@oldtitle\@title%
  \gdef\@title{\@oldtitle\footnotetext{#1 \emph{Mathematics subject classification.} #2}}%
}
\newcommand{\keywords}[1]{%
  \let\@@oldtitle\@title%
  \gdef\@title{\@@oldtitle\footnotetext{\emph{Key words and phrases.} #1.}}%
}
\author[N. Feldheim, O. Feldheim \& S. Nitzan]{Naomi Feldheim, Ohad Feldheim and Shahaf Nitzan}
\title{Persistence of Gaussian stationary processes:\\ a spectral perspective}
\begin{document}
\subjclass[2000]{60G15, 60G10, 42A38}
\keywords{
Gaussian process, stationary process, spectral measure, persistence, gap probability, one-sided barrier, Chebyshev polynomials}

\date{}
\maketitle


\begin{abstract}
{We study the persistence probability of a centered stationary Gaussian process on 
$\mathbb Z$ or $\mathbb R$, that is,
its probability to remain positive for a long time. We describe the delicate interplay between this probability and the behavior of the spectral measure of the process near zero and infinity.}
\end{abstract}

\section{Introduction}\label{sec: intro}

\subsection{General Introduction} 
The persistence of a stochastic process $f$ above a certain level $\ell$, that is, the probability
that $f(t)>\ell$ for all $t$ in some large interval,
is a classical topic of study (see the recent surveys by Aurzada-Simon~\cite{AS} and Bray-Majumdar-Schehr~\cite{BMS}).
Here we investigate the persistence probability for the class of Gaussian stationary processes (GSP's) above the mean. This quantity
has been extensively studied since the 1950's, by Slepian \cite{Slep}, Newell-Rosenblatt \cite{NR} and many others, with old and new applications
in mathematical physics, engineering and other areas of probability \cites{BMS, DM1, DM2, ScMa}.
Nonetheless, until recently, good estimates of the persistence decay were known only for particular cases (e.g. \cites{ABMO, Slep}), and for families of processes with either summable or non-negative correlations. The state of the art in the latter case was recently achieved by Dembo-Mukherjee \cite{DM2}, who were able to determine the log persistence of non-negatively correlated GSP's up to a constant factor.

A few years ago, by introducing a spectral point of view, the first two authors
were able to provide general conditions under which the log persistence is bounded between two linear functions~\cite{FF}.
This extended a result by Antezana-Buckley-Marzo-Olsen for the \emph{sinc-kernel} process~\cite{ABMO},
and provided the first general result on persistence of {GSP's} which does not require summability or non-negativity of correlations.
However,
these tools alone were insufficient to provide answers to two long-standing questions formulated by Slepian in his well known 1962 paper \cite{Slep}:

 \begin{itemize}
 \item
 What are the possible asymptotic behaviors of the persistence probability of a GSP on large intervals?
 \item What features of the covariance function determine this behavior?
\end{itemize}


Spectral methods were recently used by Krishna-Krishnapur \cite{KK} in order to give a lower bound of $e^{-cN^2}$ on the persistence
of any GSP over $\Z$, provided that the spectral measure has a non-trivial absolutely continuous part. This gave rise to other interesting questions, stated in \cite{KK} and related to us also by M. Sodin \cite{Sodin}:
\begin{itemize}
\item Is there a GSP that achieves a persistence of the order of $e^{-cN^2}$?
\item  Is it possible for a GSP over $\R$ to have an even lower persistence?
\end{itemize}

In this paper we combine the spectral methods of \cite{FF} with tools from real and harmonic analysis in order to provide nearly complete
answers to all of these questions, in the case where the spectral measure has a non--trivial absolutely continuous component. While our methods do not employ~\cite{ABMO} directly, they are nonetheless inspired by the behavior of the sinc-kernel process.
Our results promote a point of view which regards persistence as a spectral property, governed by the interplay between the spectral behavior near zero and near infinity.

\subsection{Mathematical overview and discussion}\label{sec: overview}

Let $T\in\{\Z,\R\}$. A \emph{Gaussian process} on $T$
is a random function $f:T\to\R$ whose finite marginals, i.e. $(f(t_1),\dots, f(t_n))$ for any $t_1,\dots, t_n\in T$,
have multi-variate centered Gaussian distribution.
We say that $f$ is \emph{stationary} if its distribution is invariant under translations by elements of $T$. For an introduction to Gaussian processes see \cite{AT}.

The persistence probability of a Gaussian stationary process (GSP) $f$ on $[0,N]$ is defined by
\[
P_f(N):=\Pro\Big(f(t)>0, \: \forall t\in (0,N]\cap T\Big).
\]
Notice that we consider persistence above the mean (the zero level).
Our methods may be applied to study other constant levels, though we expect some qualitative differences in the results.

A GSP is determined uniquely by its covariance kernel
$ r(t)=\E [f(0)f(t)]$, $t\in T.$
Throughout the paper we implicitly assume Gaussian stationary processes to be almost-surely continuous and with a continuous covariance kernel.
Since $r$ is continuous and positive-definite, Bochner's theorem implies that there exists a
finite, symmetric, non-negative measure $\rho$ on $T^*$ (where $\R^*$ is identified with $\R$, and $\Z^*$ is identified with $[-\pi,\pi]$) such that
 \begin{equation}\label{eq: r rho}
r(t) = \widehat{\rho}(t) = \int_{T^*} e^{-i \lm t} \ d\rho(\lm).
\end{equation}
The measure $\rho$ is called \emph{the spectral measure} of the process $f$.
It is well known that any finite, symmetric, non-negative measure on $T^*$ corresponds to a unique GSP on $T$ (see Lemma~\ref{lem: basis} for a construction).
\bigskip

While our main results are presented in Section \ref{sec: results}, we
state here a simplified version which demonstrates our findings for particularly well-behaved spectral measures.
We write $A(N)\lesssim B(N)$ to denote that $A(N) \le C \, B(N)$ for some $C>0$ and all $N$, and $A(N)\asymp B(N)$ to denote that both $A(N)\lesssim B(N)$ and $B(N)\lesssim A(N)$.

\begin{thm}\label{Thm: main presentable}
Let $f$ be a GSP over $\R$ or $\Z$.
Suppose that its spectral measure is absolutely continuous with density $w(\lm)$ satisfying $\int |\lm|^\delta w(\lm)d\lm <\infty$ for some $\delta>0$, and $c_1|\lm|^\al \le w(\lm)\le c_2|\lm|^\al$ for all $\lm$ in a neighborhood of $0$ (and some $\al >-1$, $c_1,c_2>0$). Then , for large enough $N$:
\begin{align*}
\log P_f(N)
\begin{cases}
\asymp - N^{1+\al}\log N, & \al <0 \\
\asymp - N,  & \al=0 \\
 \lesssim - N \log N, & \al>0.
\end{cases}
\end{align*}
Moreover, if $w(\lm)$ vanishes on an interval containing $0$, then $\log P_f(N)\lesssim {-N^2}$.
In this case, if the process is over $\R$ and it satisfies in addition $w(\lm) \ge \lm^{-\eta}$ for some $\eta>0$ and all $|\lm|>1$, then $ \log P_f(N) \le -e^{CN}$.
\end{thm}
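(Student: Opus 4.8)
The plan is to establish the three regimes separately, using the spectral machinery of \cite{FF} together with polynomial approximation tools.

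\medskip

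\textbf{The case $\al=0$ (linear bounds).} Here the spectral density is bounded above and below near $0$. The lower bound $\log P_f(N)\gtrsim -N$ should follow from the general lower bounds of \cite{FF}: a density bounded below near the origin allows one to compare, via Slepian-type inequalities, with a process whose persistence decays at most exponentially. For the matching upper bound $\log P_f(N)\lesssim -N$, I would use the moment condition $\int|\lm|^\delta w\,d\lm<\infty$ to control the behavior at infinity and the upper bound $w(\lm)\le c_2$ near $0$ to prevent too-heavy mass at the origin; the general spectral upper bound from \cite{FF} should then apply. So this case is essentially a citation of \cite{FF} once the hypotheses are checked.

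\medskip

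\textbf{The case $\al<0$ (mass blowing up at $0$).} Now $w(\lm)\asymp|\lm|^{\al}$ with $-1<\al<0$, so $\rho$ is unbounded near the origin but still finite. I expect the upper bound $\log P_f(N)\lesssim -N^{1+\al}\log N$ to be the heart of the matter. The strategy: the persistence event forces $f$ to stay positive on $(0,N]$, hence forces $\int_0^N f$ to be large and comparable to its length, which in spectral terms means the process cannot be well-approximated by its low-frequency part alone. Concretely, one tests against a suitable nonnegative trigonometric polynomial $Q$ of degree $\sim N$ supported near $0$: since $w$ is not integrable against $|\lm|^{-1}$... more precisely, since $\int_{|\lm|<1/N} w(\lm)\,d\lm \asymp N^{-(1+\al)}$, the fluctuations of a smoothed average of $f$ over scale $N$ are of order $N^{-(1+\al)/2}$ while persistence demands they be of order $1$; a large-deviations estimate for this Gaussian functional yields $\log P_f(N)\lesssim -N^{1+\al}$, and the extra $\log N$ factor comes from optimizing over the degree of the approximating polynomial (a Chebyshev-type / Remez-type gain, as in the paper's keyword list). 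The matching lower bound should come from exhibiting an explicit event of the required probability, again by approximating $f$ on $[0,N]$ by a low-frequency Gaussian vector and using the anti-concentration afforded by $w\gtrsim|\lm|^\al$ near $0$.

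\medskip

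\textbf{The case $\al>0$ (mass vanishing at $0$), including the last two assertions.} When $w(\lm)\asymp|\lm|^\al$ with $\al>0$, or more dramatically when $w$ vanishes on a whole interval $[-a,a]$ around $0$, the persistence should decay super-exponentially. For the upper bound $\log P_f(N)\lesssim -N\log N$ when $\al>0$: the key point is that $\widehat{w}=r$ has a zero of order related to $\al$ at $t=\infty$ in an averaged sense, so any linear functional of $\{f(t)\}_{t\le N}$ that approximates a constant must have large norm; testing persistence against a polynomial that is large on $[0,N]$ but whose ``spectrum'' avoids a neighborhood of $0$ forces a cost $\sim N\log N$. The cleanest route is: find a polynomial $p$ of degree $\sim N$ with $p\ge 1$ on $(0,N]$ but $\widehat{p}$ concentrated where $w$ is small; the variance of $\int p\,df$ is then small relative to its mean, giving the bound. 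When $w$ vanishes on $[-a,a]$, the spectral gap lets one take $p$ to be (a truncation of) a product of shifted Fej\'er-type kernels whose transform lives in $\{|\lm|>a\}$, and a counting of how large such a $p$ must be on an interval of length $N$ — essentially an uncertainty-principle/Tur\'an-type estimate — yields the quadratic bound $\log P_f(N)\lesssim -N^2$. Finally, the doubly-exponential bound $\log P_f(N)\le -e^{CN}$ on $\R$ under $w(\lm)\ge|\lm|^{-\eta}$ for $|\lm|>1$: here the heavy tail of the spectrum at infinity means the process is very rough, so on the gap $[-a,a]$ one can build compactly-supported (in frequency) approximations to functions that oscillate on an exponentially fine scale; iterating the previous argument with polynomials of doubly-exponential degree — permissible precisely because the process on $\R$ ``sees'' arbitrarily high frequencies with enough mass — compounds the quadratic cost into an exponential one.

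\medskip

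\textbf{Main obstacle.} The hard part throughout is the upper bounds, i.e.\ constructing, for each regime, the optimal test polynomial and proving the matching large-deviations bound for the associated Gaussian functional; in particular, for the $w$-vanishes-on-an-interval and the doubly-exponential cases the construction must exploit the spectral gap at $0$ together with the mass at $\infty$ simultaneously, and quantifying how large a trigonometric polynomial with a spectral gap of width $2a$ must be on an interval of length $N$ (a Tur\'an--Remez-type lower bound, with the right $N$-dependence, and its iterated version on $\R$) is where the real work lies. The lower bounds, by contrast, should follow more routinely from low-frequency Gaussian approximation and anti-concentration, once the correct heuristic scale is identified.
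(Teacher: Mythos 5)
Your proposal gets the heuristic scales right (the variance of the running average of order $N^{-(1+\al)}$, the special role of the spectral gap and of the heavy tail), but the central step of every upper bound is asserted rather than proved, and it is precisely the step that fails without a new idea. You write that ``persistence demands [the smoothed average of $f$] be of order $1$,'' and similarly that a test polynomial $p\ge 1$ on $(0,N]$ forces $\int pf$ to be large. Positivity of a Gaussian process on $[0,N]$ does \emph{not} by itself force any linear functional to be quantitatively large (consider $f\equiv\zeta$ for a single Gaussian $\zeta$: persistence has probability $1/2$ while every average of $f$ equals $\zeta$, which is positive but typically tiny). The paper closes this gap by running the logic in the other direction and by exploiting the absolutely continuous component: (a) it shows the time-average of $f$ is \emph{small} with high probability, by integrating $k$ times to form an anti-derivative process $F_k$ with $F_k^{(k)}\overset{d}{=}f$, controlling $\sup F_k$ via Dudley and Borell--TIS, and converting ``$F_k^{(k)}>0$ and $\sup|F_k|\le M$'' into ``the average of $f$ is $\le (c_0k/N)^kM$'' via the Chebyshev-extremal-polynomial Theorem~\ref{thm: anal}; and (b) it shows that ``$f>0$ at $\asymp N$ points \emph{and} small average'' has probability $\le \Pro(\beta|Z|\le\ell)^{cN}$, by extracting an i.i.d.\ Gaussian component on a positive-density subset of $\Z$ via the Bourgain--Tzafriri restricted invertibility theorem (Claim~\ref{clm: Riesz}) and a simplex-volume/Khatri--\v{S}id\'ak computation. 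Neither ingredient appears in your sketch, and without (b) in particular your large-deviation estimates bound the wrong event.

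Several secondary points also diverge from what actually happens. The $\log N$ factors come from optimizing the threshold level $\ell\sim\sqrt{\log N}$ in the trade-off between the Gaussian tail of the average and the $N$-th power of a ball probability, not from a Chebyshev/Remez gain in polynomial degree; indeed for $\al<0$ (so $\g=1+\al<1$, $k=0$) no Chebyshev polynomials are used at all. The $-N^2$ bound for a spectral gap does not come from a Tur\'an-type lower bound for gapped trigonometric polynomials, but from taking $k\asymp N$ anti-derivatives (possible since $m_{-2k}\le a^{-2k}<\infty$ for all $k$) and the $(c_0k/N)^k$ factor in Theorem~\ref{thm: anal}. And the $-e^{CN}$ bound is not an ``iteration'' of the quadratic bound: it comes from choosing the set $E=[1,x_N]$ with $x_N=e^{cN/\eta}$ on which the density is bounded below, so that after rescaling the interval $[0,N]$ contains $\asymp Ne^{cN}$ effective lattice points carrying independent Gaussian components, and the exponent $\al N\propto|E|N$ in Theorem~\ref{thm: balance up} becomes exponentially large. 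Your lower-bound sketch (low-frequency approximation plus anti-concentration) is closer to the paper's actual argument, which decomposes $f\overset{d}{=}\zeta\psi_N\oplus R$ with $\psi_N$ the transform of the normalized indicator of $[-1/N,1/N]$ and controls $R$ by Anderson plus a chaining ball estimate, but even there the continuous-time ball estimate (Lemma~\ref{lem: prod}) is a nontrivial step you have not supplied.
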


\smallskip

In Section \ref{sec: results} we provide results which are more general than Theorem \ref{Thm: main presentable}.
{The results are given by
\emph{the spectral mass near the origin} $\rho([0, \tfrac 1 N])$.
We do not require $\rho$ to have density; but rather a \emph{non-trivial absolutely continuous component} for the upper bounds, 
and \emph{no additional condition at all} for the lower bounds.
}

\medskip

For measures with a non--vanishing absolutely continuous component, our results prove the \emph{``spectral gap conjecture''} \cites{KK, Sodin}. This conjecture states that any process whose spectral measure vanishes on an interval around $0$ should have persistence smaller than $e^{-CN^{2}}$.
Prior to this paper there has not been any rigorous example of a GSP whose persistence decays faster than the order $e^{-C N \log N}$, although it was believed that the lower bound of $e^{-C N^2}$ given by
 Krishna-Krishnapur in~\cite{KK} for processes over $\Z$ should be attainable.
In a later work, joint with B.~Jaye and F.~Nazarov, we further developed the techniques of the present paper in order to establish the spectral gap conjecture in full generality. This result appears in a separate paper \cite{FFJNN}, which concentrates on the case of a spectral gap alone. 
 

Interestingly, our stochastic result regarding a spectral gap corresponds with the following analytic theorem of Eremenko-Novikov \cite{EN}:
The Fourier transform of a measure with a spectral gap has a positive asymototic density of zeroes on the real line.
Roughly speaking, both results reflect the idea that functions with a spectral gap have a strong tendency to oscillate.
Another result in this flavor was obtained recently by Borichev-Sodin-Weiss \cite{BSW}. They showed that any finite-valued stationary process on $\Z$ which has a spectral gap must be periodic, thus giving a probabilistic counterpart to a theorem by Helson (see references within \cite{BSW}).

\medskip

One may notice that no matching lower bound is given in Theorem \ref{Thm: main presentable} for the case $\al>0$. Over~$\Z$ we give such a matching lower bound (see Proposition \ref{cor: low exp} below), but over $\R$ this is impossible. This is due to an interesting phenomenon which happens only in continuous time:
\emph{When the spectral measure vanishes at $0$, then the heavier is the spectral tail at infinity -- the smaller is the persistence probability.}
This phenomena is reflected in the estimate $\log P_f(N)\le -e^{CN}$ appearing in Theorem~\ref{Thm: main presentable}, and a precise formulation of it appears in Proposition \ref{cor: small}.
A possible interpretation is that the heavy tail makes the process very rough, and this non-smoothness makes it even harder to stay positive (as opposed to a smooth process, for which positivity at a certain point makes it more likely for its whole neighborhood to be positive).
However, if the spectral measure is compactly supported, we believe matching lower bounds should hold (as is the case over $\Z$). This remains to be studied.

\medskip
As noted earlier, one novelty in our work is the ability to capture persistence behavior without requiring absolute summability or non-negativity of correlations.
For non-negatively correlated processes, that is, when $r(t)\ge 0$ for all $t\in T$,
the asymptotic behavior of $P_f(N)$ could be obtained
directly from $r(t)$ without using the spectral measure.
This was done by Dembo-Mukherjee, first in \cite{DM1} for the case $\al=0$ and later in \cite{DM2} for $\al<0$ (using the notation of Theorem~\ref{Thm: main presentable}).
Notice that, when $r(t)\ge 0$ the spectral measure at $\lm=0$ cannot vanish, so the case $\al>0$ is impossible.

\subsection{Outline of the paper}
The rest of the paper is organized as follows.

Section \ref{sec: results} contains {more precise} formulations of our results.
We present several propositions with explicit upper and lower bounds under spectral conditions (which imply, in particular, Theorem \ref{Thm: main presentable}). These propositions are all corollaries of Theorems \ref{thm: balance low} and \ref{thm: balance up}, which we formulate in Sections~\ref{sec: low} and~\ref{sec: up} respectively.
In Section \ref{sec: results} we also present Theorem \ref{thm: anal} which is an analytic tool in the flavor of the persistence results developed in this paper.
 Section \ref{sec: prelim} contains useful tools, such as spectral properties and decompositions of GSPs, ball and tail estimates, and well-known Gaussian inequalities.
Section \ref{sec: low} is dedicated to the proofs of lower bounds:
We develop a general inequality (Theorem \ref{thm: balance low}), from which we deduce Propositions \ref{cor: low exp} and \ref{cor: low}.
Section \ref{sec: up} is dedicated to the proofs of upper bounds:
We develop a general inequality (Theorem \ref{thm: balance up}) from which we deduce Propositions \ref{cor: up}, \ref{cor: up inf} and \ref{cor: small}.
Finally, Section \ref{sec: anal} contains the proof of the analytic (non-probabilistic) result, Theorem \ref{thm: anal}, which is used in Section \ref{sec: up}.

\section{Results}\label{sec: results}
In this section we provide a more precise presentation of our results.
Let $f$ be a GSP over $\R$ or $\Z$ with spectral measure $\rho$.
For $\delta\in \R$ denote the $\delta$-moment of $\rho$ by
\[
 m_\delta= m_\delta(\rho):=\int_{T^*} |\lm|^\delta \ d\rho(\lm).
 \]
Throughout, we assume that $\rho$ is normalized and has some finite positive moment, that is,
\begin{equation}\label{eq: basic cond}
\exists \delta>0: \, m_\delta<\infty, \quad \text{and} \quad m_0=\rho(T^*)=1.
\end{equation}
To capture the spectral behavior near zero we will employ both negative moments and the total measure on small intervals which we denote by
\[
\si^2_N:= \rho([0,\tfrac 1 N]), \quad \text{ for } N>0.
\]
These two ways to describe a measure near $0$ are related, e.g. by the following (see Observation~\ref{obs: IBP}):
\begin{equation}\label{eq: IBP items}
\begin{cases}
\text{If $\si_N^2 \le b N^{-(\g + \ep)}$ for some $b,\ep>0$ and all $N>0$, then $m_{-\g} < \infty$.} \\
\text{If $m_{-\g} < \infty$, then $\si_N^2\le b N^{-\g}$ for some $b>0$ and all $N>0$.}
\end{cases}
\end{equation}


The absolutely continuous component of $\rho$ is denoted by $\rho_{ac}$, and the notation
$\rho_{ac}\neq 0$ means that it is not trivial.
The support of $\rho_{ac}$ is denoted by $\sprt(\rho_{ac})$, and $|E|$ denotes the Lebesgue measure of a set $E$.
We reserve the letter $Z$ to denote a standard Gaussian random variable, i.e. $Z\sim\calN(0,1)$.

\subsection{Lower bounds}
Here we provide explicit
 lower bounds for the persistence probability.
{These bounds do not require the spectral measure to have a non-trivial absolutely continuous part,
 and depend only on the spectral mass near the origin.}
 In Section \ref{sec: low} we provide a more general lower bound (Theorem \ref{thm: balance low}) from which these {propositions} follow.
Throughout, we assume $f$ is a GSP with spectral measure $\rho$ satisfying the conditions in  \eqref{eq: basic cond}.
\begin{prop}[explicit lower bounds]\label{cor: low exp}
Assume that $m_\delta<\infty$ for some $\delta>0$.
Assume that $\si^2_N\ge b N^{-\g}$ holds on a subsequence of $\N$, where $b,\g>0$ are some constants. Then
there exists $C,N_0>0$ such that along this subsequence, for $N>N_0$,
\begin{align*}
\log P_f(N) \ge
\begin{cases}
-C N^{\g}\log N, & \g <1 \\
 -C N,  & \g=1 \\
  -C N \log N, & \g>1,\ T=\Z.
\end{cases}
\end{align*}
Here $N_0$ and $C$ depend only on $b,\g,\delta, m_\delta$.
\end{prop}

Note that in the case $\g>1$, which corresponds to vanishing spectrum at the origin, we give a lower bound only over $T=\Z$. As was discussed in Section \ref{sec: overview}, a similar lower bound is not true over $\R$ (see Proposition \ref{cor: small} below).
However, over $\Z$ we obtain additional estimates in the case of deeply vanishing spectrum at the origin.

\begin{prop}[lower bounds for vanishing spectrum]\label{cor: low}
Over $T=\Z$, assume that for a certain $N$ we have $N\si_N^2 <1$. Then for some universal $C>0$ we have
\[
\log P_f(N) \ge C N ( \log(N\si_N^2)-1).
\]

\end{prop}
In particular, if $\si_N^2 \ge e^{-AN^{\alpha}}$ for some
$A>0$ and $0<\alpha\leq 1$,
then $\log P_f(N) \ge -C' N^{1+\al}$.
 Note that Proposition \ref{cor: low} reproduces the cases $\g=1$ and $\g>1$ of Proposition \ref{cor: low exp} (over $\Z$).
However, it does not capture the lower bound of $e^{-c N^2}$ which holds for any spectral measure with density, as proven by
 Krishna-Krishnapur in \cite{KK}. 

\subsection{Upper bounds}
Here we state explicit upper bounds for the persistence probability.
In Section \ref{sec: up} we provide a more general upper bound (Theorem \ref{thm: balance up}) from which these propositions follow.
Throughout, we assume $f$ is a GSP with spectral measure $\rho$ satisfying the conditions in  \eqref{eq: basic cond}.
\begin{prop}[explicit upper bounds]\label{cor: up}
Assume that $\rho_{ac}\ne 0$ and 
{$\si_N^2 \le b N^{-\g}$ for all $N>0$ and some $b, \g>0$.}
Then there exist $N_0, C>0$ such that for all $N>N_0$:
\begin{align*}
\log P_f(N) \le
\begin{cases}
-C N^{\g}\log N, & \g <1 \\
 -C N,  & \g=1 \\
  -C N \log N, & \g>1.  
\end{cases}
\end{align*}
Here $N_0$ and $C$ depend on
 $b,\g, m_{-\g+\ep}, m_{-2k}, \nu, E$, where $\ep>0$ is arbitrary, $E$ is a set on which $d\rho_{ac} \ge \nu dx$ and $\g=2k+s$ with $k\in \N_0$ and $s<2$.
The dependence of $C$ on $E$ is linear in $|E|$.
\end{prop}

If the spectral measure has an infinite order zero at the origin, then the persistence becomes much smaller.
In particular, the estimate $e^{-cN^2}$ for the persistence of a process with a spectral gap is implied by the following proposition. In addition, this proposition provides an interpolation between this estimate and the results stated in Proposition  \ref{cor: up}.

\begin{prop}[upper bounds for vanishing spectrum]\label{cor: up inf}
Assume that $\rho_{ac}\ne 0$ and $m_{-4}<\infty$.
Then for large enough~$N$ we have
\[
\log P_f(N) \le -C N k (N) \log \left(\tfrac{cN}{k(N)}\right),
\]
where $C$ and $c$ are positive constants depending on $\rho$, and
$k=k(N)$ is any integer such that 
\[
1\le k\le \min(1,c)N, \quad k \, m_{-2k}^{1/k}\le  N.
\]

\end{prop}
In particular:
\begin{itemize}

\item If  $\rho\equiv 0$ on $[-\delta, \delta]$ for some $\delta>0$, then $m_{-k} < \delta^{-k}$ for all $k>0$ and $\log P_f(N) \lesssim -  N^2$.

\item  Let $A>0$. If $m_{-2k} < k^{A k}$ for all $k>0$, then $\log P_f(N) \lesssim -N^{1+\frac 1{1+ A}}\log N$.\\
(An example of this behavior is given by the spectral density $e^{-\frac 1{|\lm|^A}} \mathbf{1}_{[-1,1]}(\lm)$.)


\item If $m_{-2k_0}<\infty$ for some $k_0\ge 2$ but $m_{-2k}=\infty$ for $k>k_0$, then 
$\log P_f(N) \lesssim - N \log N$.\\ (this is implied also by the case $\g>1$ of Proposition \ref{cor: up}.)

\end{itemize}

The next result shows that over $\R$, if the spectral measure vanishes at zero and has a heavy enough tail at infinity, then the persistence probability is tiny.

\begin{prop}[tiny persistence]\label{cor: small}
Let $f$ be a GSP over $\R$, whose spectral measure $\rho$ has an absolutely continuous component with density $w(\lm)$.
Let {$\al>1$}. There exists a constant $C>0$ such that the following hold for large enough $N$.
\begin{enumerate}
\item If $\rho=0$ on $[-1,1]$ and $w(\lm) \ge \lm^{-\al}$ for $|\lm|>1$, then $\:\:\log P_f(N) \le -e^{CN}$.

\item If $m_{-4}(\rho)<\infty$ and $w(\lm) \ge \lm^{-\al}$ for $|\lm|>1$, then $\:\:\log  P_f(N) \le - C N^{1+
\frac 2 {\al}}\log N$.

\end{enumerate}
\end{prop}

Using our methods one can generate many more such examples, for instance, it is possible to get $\log P_f(N) \le -e^{C\sqrt N}$ with only first-order vanishing at $0$ (by using a tail of $\frac 1{\lm \log^2 \lm}$ on $[1,\infty)$).

\bigskip
We note that by combining Proposition \ref{cor: low exp} with Proposition \ref{cor: up} we obtain Theorem \ref{Thm: main presentable}. 
The `moreover' part of Theorem \ref{Thm: main presentable} follows from the first cases of Propositions \ref{cor: up inf} and \ref{cor: small}.

\subsection{Limitations of analysis near the origin}
Proposition~\ref{cor: small} indicates that for spectral measures which nullify near the origin,  the asymptotic behavior of $\log P_f(N)$ is influenced by the decay of the spectral measure near infinity. 
However, for compactly supported measures
the asymptotic behavior of $\log P_f(N)$ appears to be governed, up to a constant, by the spectral behavior near the origin. Nevertheless, even in this case, the leading constant itself cannot be determined by this local behavior, as it is affected by the entire measure. 
We demonstrate this through the following example, which is proved in Section~\ref{sec: up}.
\begin{prop}[leading constant]\label{cor: leading}
There exist GSPs $f_1$ and $f_2$, whose spectral measures are supported on $[-\pi,\pi]$ and coincide in a neighborhood of $0$, for which
\[
C_1<-\frac {1}{N} \log P_f(N)   < C_2 < -\frac {1}{N}\log P_{f_2}(N) < C_3.
\]
for some constants $C_1,C_2,C_3\in (0,\infty)$ and all sufficiently large $N$.
\end{prop}


\subsection{An analytic result}

Finally, we present an analytic result (in the flavor of persistence) which plays a role in proving our upper bounds.
It quantifies the fact that, 
if a function on an interval takes values in 
$[-L,L]$ and has a positive $k$-th derivative,
then the average of this $k$-th derivative cannot be too large with respect to $L$.
The statement holds true both on $T=\Z$ and on $T=\R$, where over $\Z$ the notion of derivative is the usual discrete one (see \eqref{eq: Delta} below)
and integrals are replaced by sums.

\begin{thm}[deterministic result]\label{thm: anal}
Let $T\in \{\Z, \R\}$ and $N>0$. Fix $k\in \N$ such that $k\le N$.
Suppose that $f:T\to \R$
is $k$-times differentiable and $f^{(k)}>0$ on $[-N,N]\subset T$.
Then
\[
\frac 1{N} \int_{-\tfrac{9}{20} N}^{\tfrac 9{20} N} f^{(k)} \le \left(\frac{c_0 k}{N}\right)^k \
\sup_{[-N,N]} |f|,
\]
where $c_0>0$ is a universal constant.
\end{thm}

Results of this type play a role in obtaining Remes-type inequalities in the context of approximation theory. 
Theorem~\ref{thm: anal} is a variant of a theorem due to Bernstein~\cite{Bern}, which implies that if $\inf_{[-N,N]} f^{(k)}> k!$ then
 $\sup_{[-N,N]} |f| \ge ({N}/ {c_0} )^k$ (see~\cite{Ganz}*{Sec. 2.1} for this result and a short discussion). It appears though, that Bernstein's proof has never appeared in English, and
as neither of the theorems implies the other, 
we provide a proof of our own in Section~\ref{sec: anal}. This proof also employs results from approximation theory and relies on the fact that 
the $k$-th degree Chebyshev polynomial is in some sense extremal for this inequality.

\section{Preliminaries}\label{sec: prelim}
This section contains tools which will be used throughout our proofs. These tools include properties implied by finite spectral moments, decompositions of a GSP, basic calculus for GSPs (that is, properties of its derivative and anti-derivative processes), ball and tail estimates, and some famous Gaussian inequalities. We begin with some notations and then sort our tools by topic.

\subsection{Notation}
Recall that we let $T$ be either $\R$ or $\Z$ and  correspondingly, we let $T^*$ be $\R$ or $[-\pi,\pi]$. Let $\rho$ be a positive, symmetric, and finite measure over $T^*$ and denote by
 $\mathcal{L}^2_\rho(T^*)$ the Hilbert space of functions $\{ \p: \ \int|\p|^2 d\rho<\infty\}$ with the inner product
$ \langle \p_1, \p_2 \rangle = \int_{T^*} \p_1 \overline{\p_2} d\rho$.
In case $\rho$ has density $\ind_E$ with $E \subseteq T^*$ being a compact, symmetric measurable set, we abbreviate this space by $\mathcal{L}^2_E$.

Unless stated otherwise, we always assume
$f:T\to\R$ to be a GSP with spectral measure $\rho$ and covariance function $r$. As before, for $\delta\in \R$ we denote the $\delta$-moment of $\rho$ by
 $m_\delta=\int_{T^*} |\lm|^\delta \ d\rho(\lm)$. Recall that we assume $m_\delta < \infty$ for some $\delta>0$.

A set of integers $\Lambda\subset \Z$ is said to have positive (central) density if
\[
D^-(\Lambda) := \liminf_{N\to\infty} \frac {\left|\Lambda \cap [-N,N] \right|}{2N}>0.
\]

The symbol $\overset d=$ indicates equality in distribution (between two random variables or processes on the same space).
The symbol $\oplus$ is used to sum two independent processes.

\subsection{Spectral moments}
Below are a few observations regarding spectral moments.
The first relates negative moments with
the spectral mass near $0$, as was stated in~\eqref{eq: IBP items}.

\begin{obs}\label{obs: IBP}
Let $\g, \ep >0$.
\begin{itemize}
\item[i.] If $\rho([0,\lm]) \le b\lm^{\g + \ep}$ for some $b>0$ and all $\lm>0$, then $m_{-\g}<\infty$.
\item[ii.] If $m_{-\g}<\infty$ then $\rho([0,\lm]) \le b \lm^{\g}$ with $b=\tfrac{1}{2}m_{-\g}$.
\end{itemize}
\end{obs}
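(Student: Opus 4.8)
The statement to prove is Observation~\ref{obs: IBP}, relating negative moments of $\rho$ to its mass near the origin.

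\medskip

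\textbf{Plan of proof.} Both parts follow from writing the relevant integral as a Stieltjes integral against the distribution function $G(\lm) := \rho([0,\lm])$ and integrating by parts (equivalently, using Fubini's theorem on a suitable layer-cake representation). By symmetry of $\rho$, it suffices throughout to work on $[0,\infty)$ (or $[0,\pi]$) and double: $m_{-\g} = 2\int_{0^+}^{\cdot} \lm^{-\g}\, d\rho(\lm)$, with the integrand understood to exclude any atom at $0$ — note that finiteness of $m_{-\g}$ for $\g>0$ forces $\rho(\{0\})=0$, so this is harmless in part i and automatic in part ii.

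\medskip

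For part (ii), assume $m_{-\g}<\infty$. For any fixed $\lm>0$ I would simply estimate
\[
m_{-\g} \;=\; 2\int_{(0,\infty)} t^{-\g}\, d\rho(t) \;\ge\; 2\int_{(0,\lm]} t^{-\g}\, d\rho(t) \;\ge\; 2\lm^{-\g}\,\rho((0,\lm]) \;=\; 2\lm^{-\g}\,\rho([0,\lm]),
\]
using $t^{-\g}\ge \lm^{-\g}$ on $(0,\lm]$ and $\rho(\{0\})=0$. Rearranging gives $\rho([0,\lm]) \le \tfrac12 m_{-\g}\,\lm^{\g}$, which is exactly the claim with $b=\tfrac12 m_{-\g}$. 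This direction is essentially immediate.

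\medskip

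For part (i), assume $\rho([0,\lm]) = G(\lm) \le b\lm^{\g+\ep}$ for all $\lm>0$. Here I would use the layer-cake / integration-by-parts identity
\[
\int_{(0,\infty)} t^{-\g}\, d\rho(t) \;=\; \g \int_0^\infty s^{-\g-1}\,\rho((0,s])\, ds \;=\; \g\int_0^\infty s^{-\g-1} G(s)\, ds,
\]
valid because $t^{-\g} = \g\int_t^\infty s^{-\g-1}\,ds$ and Tonelli applies to the nonnegative integrand. Now split the $s$-integral at $s=1$. On $(0,1]$, the hypothesis gives $G(s)\le b s^{\g+\ep}$, so $s^{-\g-1}G(s) \le b s^{\ep-1}$, which is integrable near $0$ since $\ep>0$, contributing at most $b/\ep$. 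On $[1,\infty)$, $G(s)\le \rho(T^*) = m_0 <\infty$ (finite since $\rho$ is a finite measure; in the normalized setting $m_0=1$), so $s^{-\g-1}G(s)\le m_0 s^{-\g-1}$, integrable at infinity since $\g>0$, contributing at most $m_0/\g$. Hence $m_{-\g} = 2\g\int_0^\infty s^{-\g-1}G(s)\,ds < \infty$. (Over $\Z$, where $T^*=[-\pi,\pi]$, the tail integral is over $[1,\pi]$ and is trivially finite; nothing changes.)

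\medskip

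\textbf{Main obstacle.} There is no substantial obstacle here — the only points requiring a little care are (a) justifying the integration-by-parts / Tonelli step cleanly for a general Stieltjes measure (handled by the layer-cake representation $t^{-\g}=\g\int_t^\infty s^{-\g-1}ds$ with a nonnegative integrand, so Tonelli applies without integrability hypotheses), and (b) the bookkeeping around a possible atom of $\rho$ at $0$, which is dispatched by the observation that $m_{-\g}<\infty$ (or the desired conclusion) is only meaningful when $\rho(\{0\})=0$. Both parts are short.
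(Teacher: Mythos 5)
Your proof is correct and follows essentially the same route as the paper: part (ii) is the same one-line Chebyshev-type bound, and part (i) is the paper's integration-by-parts argument, which you justify via the equivalent layer-cake/Tonelli identity and the same split of the $s$-integral at $1$. Your added care about the atom at $0$ and the rigorous Tonelli step are fine refinements of the same argument.
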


\begin{proof}
For the first item, assume that $\rho([0,\lm]) \le b\lm^{\g + \ep}$ and use integration by parts:
\begin{align*}
\int_0^\infty \frac{d\rho(\lm)}{\lm^\g } &= -\lim_{\lm\to 0} \frac{\rho([0,\lm])}{\lm^\g} + \g \int_0^\infty \frac{\rho([0,\lm])}{\lm^{\g+1}} d\lm
\\ &\le 0 + \g\left(b \int_0^1 \lm^{-1+\ep} d\lm +\int_1^\infty \frac {d\lm}{\lm^{1+\g}}  \right) < \infty.
\end{align*}
For the second part, notice that $\int_0^\infty \frac{d\rho(\lm)}{\lm^\g} \ge \frac{1}{\tau^\g} \int_0^{\tau} d\rho(\lm)$ for any $\tau>0$.
\end{proof}

The second observation relates positive moments with the behavior of the covariance function near $0$.
\begin{obs}\label{obs: short dist}
If $m_\delta<\infty$ for some $\delta \in (0,2]$, then
\[
\forall t \in T: \quad r(t) \ge r(0) -C(\delta) m_\delta |t|^\delta,
\]
where $C(\delta)$ is a positive constant depending only on $\delta$. In particular, $C(2)=\tfrac 12$.
\end{obs}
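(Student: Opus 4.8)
The plan is to estimate $r(0)-r(t)$ directly from the spectral representation~\eqref{eq: r rho}. Writing $r(0)-r(t) = \int_{T^*}(1-\cos(\lm t))\,d\rho(\lm)$ (the sine part drops out by symmetry of $\rho$), the task reduces to the elementary pointwise bound $1-\cos(x) \le C(\delta)|x|^\delta$ for $x\in\R$ when $\delta\in(0,2]$. Once this is in hand, integrating against $d\rho$ gives $r(0)-r(t) \le C(\delta)|t|^\delta \int_{T^*}|\lm|^\delta\,d\rho(\lm) = C(\delta)m_\delta|t|^\delta$, which is exactly the claim. Note that over $\Z$ the integral is over $[-\pi,\pi]$, but nothing changes since the same pointwise inequality is applied.

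The remaining point is to justify $1-\cos(x)\le C(\delta)|x|^\delta$ with the sharp constant $C(2)=\tfrac12$. For $\delta=2$ this is the familiar $1-\cos(x)\le \tfrac12 x^2$ (e.g. from $1-\cos x = 2\sin^2(x/2)$ and $|\sin u|\le |u|$). For general $\delta\in(0,2)$, split on $|x|\le 1$ and $|x|>1$: on $|x|\le 1$ use $1-\cos(x)\le \tfrac12 x^2 \le \tfrac12 |x|^\delta$ since $|x|^2\le|x|^\delta$ there; on $|x|>1$ use the trivial bound $1-\cos(x)\le 2 \le 2|x|^\delta$. Hence $C(\delta)=2$ works for all $\delta\in(0,2]$, while the sharper value $\tfrac12$ holds at $\delta=2$, which is all that is asserted.

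I do not anticipate any real obstacle here; the only mild subtlety is keeping track of which constant is claimed (a universal $C(\delta)$, with the explicit value only at the endpoint $\delta=2$), and observing that finiteness of $m_\delta$ is used merely to make the right-hand side finite and meaningful. The symmetry of $\rho$, used to discard the imaginary/odd part of $e^{-i\lm t}$, should be stated explicitly since it is what makes $r(0)-r(t)$ real and nonnegative in the first place.
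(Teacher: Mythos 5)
Your proposal is correct and follows essentially the same route as the paper: both write $r(0)-r(t)=\int(1-\cos(\lm t))\,d\rho(\lm)$ using symmetry of $\rho$ and then apply the pointwise bound $1-\cos(x)\le C(\delta)|x|^\delta$, the paper simply taking $C(\delta)=\sup_{x}\frac{1-\cos x}{|x|^\delta}$ while you verify finiteness by an explicit case split. No gaps.
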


\begin{proof}
By definition $r(t)$ is symmetric and we have:
\[
r(0)-r(t)=\int_\R\Big(1-\cos(\lm t) \Big) d\rho(\lm)\le C|t|^\delta \int_\R |\lm|^\delta d\rho(\lm),
\]
where $C =C(\delta)= \sup_{x\in\R} \frac {1-\cos(x)}{|x|^\delta}<\infty$ is finite when $\delta\in(0,2]$.
In particular, $C(2)=\tfrac 1 2$.
\end{proof}

We conclude with a property of negative moments.
\begin{clm}\label{clm: finite tau}
For any spectral measure $\rho$, there exists $\tau\in (0,\infty)$ such that
$m_{-2k+2} \le \tau \cdot m_{-2k}$ for all $k\in\N$.
\end{clm}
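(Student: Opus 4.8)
The plan is to reduce the claim to controlling the single ratio $m_0/m_{-2}$, by showing that $m_{-2k+2}/m_{-2k}$ is non-increasing in $k\in\N$. This is not obvious a priori, since $m_{-2k+2}=\int_{T^*}|\lm|^{2}|\lm|^{-2k}\,d\rho$ and the factor $|\lm|^{2}$ is unbounded when $T^*=\R$; the resolution is that for $k\ge 1$ we are only comparing \emph{negative}-order moments, so the measure $|\lm|^{-2k}d\rho$ becomes more concentrated near $0$ as $k$ grows, which can only push down its $|\lm|^{2}$-average.

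Concretely, I would first dispose of the degenerate case $m_{-2}=\infty$ (which in particular covers $\rho(\{0\})>0$). Splitting the integral at $|\lm|=1$, the contribution of $\{|\lm|>1\}$ to $m_{-2}$ is at most $\rho(T^*)=m_0<\infty$, so the contribution of $\{0<|\lm|\le 1\}$ is infinite; since $|\lm|^{-2k}\ge |\lm|^{-2}$ on that set for every $k\ge 1$, it follows that $m_{-2k}=\infty$ for all $k\in\N$, and then $m_{-2k+2}\le\tau\, m_{-2k}$ holds trivially with $\tau=1$. So I may assume $m_{-2}<\infty$; since $\rho$ is a nonzero positive measure this also gives $m_{-2}>0$. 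The analytic input is Hölder's inequality in its log-convexity form: if $b=\tfrac{a+c}{2}$ for real exponents $a<c$, then $m_b^{2}\le m_a m_c$. Taking $(a,b,c)=(-2j,\,-2j+2,\,-2j+4)$ yields $m_{-2j+2}/m_{-2j}\le m_{-2j+4}/m_{-2j+2}$ for $j\ge 2$ (when the moments are finite), and telescoping this chain from $j=k$ down to $j=2$ gives
\[
\frac{m_{-2k+2}}{m_{-2k}}\ \le\ \frac{m_0}{m_{-2}}\ =\ \frac{1}{m_{-2}}.
\]
Hence $\tau:=\max\{1,\,m_{-2}^{-1}\}\in(0,\infty)$ works for every $k$ with $m_{-2k}<\infty$, while for $k$ with $m_{-2k}=\infty$ the desired inequality is immediate.

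The one step that needs care — and the place I would expect any trouble — is the bookkeeping with possibly infinite moments: the telescoped Hölder chain is only valid along a stretch of finite moments. This is handled by the elementary observation that $m_{-2k}<\infty$ forces $m_{-2j}<\infty$ for all $0\le j\le k$: splitting at $|\lm|=1$ and using $|\lm|^{-2j}\le\max\{|\lm|^{-2k},1\}$ gives $m_{-2j}\le m_{-2k}+m_0$. Thus whenever the left-hand side of the target inequality is finite, so are all of $m_0,m_{-2},\dots,m_{-2k}$, and the chain above applies verbatim. Everything else is routine.
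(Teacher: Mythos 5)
Your proof is correct, and it takes a genuinely different route from the paper's. The paper picks a median-type point $M>1$ with $\rho([0,M])\ge\tfrac12\rho([0,\infty))$; then for every $j$ the near part $\int_0^M\lm^{-j}d\rho$ dominates the far part $\int_M^\infty\lm^{-j}d\rho$ (the weight is $\ge M^{-j}$ on $[0,M]$ and $\le M^{-j}$ on $[M,\infty)$), so $m_{-2k+2}\le 2\int_0^M\lm^{-2k+2}d\rho\le 2M^2\int_0^M\lm^{-2k}d\rho\le 2M^2\,m_{-2k}$, giving $\tau=2M^2$ with no finiteness bookkeeping (all inequalities live in $[0,\infty]$). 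You instead invoke log-convexity of $\delta\mapsto m_\delta$ via Cauchy--Schwarz, $m_{-2j+2}^2\le m_{-2j}\,m_{-2j+4}$, to show the consecutive ratio $m_{-2k+2}/m_{-2k}$ is non-increasing in $k$ and hence bounded by $m_0/m_{-2}$; this requires the case analysis for infinite moments and the positivity of $m_{-2j}$, both of which you handle correctly (the key observation $m_{-2j}\le m_{-2k}+m_0$ for $j\le k$ is exactly what is needed to make the telescoping legitimate). The two arguments buy slightly different things: the paper's is a one-line pointwise comparison that works uniformly without any degeneracy discussion, while yours reveals the stronger structural fact that the ratios are monotone and yields the explicit (and typically sharper) constant $\tau=\max\{1,m_0/m_{-2}\}$. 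Both are complete proofs of the claim.
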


\begin{proof}
Let $M>1$ be such that $\int_0^M d\rho \ge \frac 1 2 \int_0^\infty d\rho$. This implies that
 $\int_0^M d\rho \ge \int_M^\infty d\rho$ and so, we have for every $j\in N$,
\[
\int_{0}^M \frac{d\rho(\lm)}{\lm^{j} } \geq \frac{1}{M^j} \int_0^M d\rho(\lm) \geq \frac{1}{M^j} \int_M^\infty d\rho(\lm)\geq \int_M^\infty \frac{d\rho(\lm)}{\lm^{j} }.
\]
It follows that  for every $j\in N$ we have
$\int_{0}^M \frac{d\rho(\lm)}{\lm^{j} }\geq \frac 1 2 \int_0^\infty \frac{d\rho(\lm)}{\lm^{j}} .$
We can therefore conclude that
\[
{\frac{1}{2}} m_{-2k}\geq \int_{0}^M \frac{d\rho(\lm)}{\lm^{2k} }\geq \frac{1}{M^2}\int_{0}^M \frac{d\rho(\lm)}{\lm^{2k-2} }\geq \frac{1}{2M^2}\int_0^\infty \frac{d\rho(\lm)}{\lm^{2k-2} }= \frac{1}{4M^2}\ m_{-2k+2}.
\]
\end{proof}

\subsection{Decomposition of a GSP}\label{sec: decomp}
\subsubsection{Spectral decomposition}

Assume that $\rho_{ac}\ne 0$.
This condition may be written more explicitly as follows: There exists a number $\nu>0$ and a bounded
set of positive measure $E\subset T^{*}$ such that
\begin{equation}\label{eq: AC cond}
d\rho = \nu \ind_E(\lm)d\lm + d\mu,  \quad \text{ where $\mu$ is a non-negative measure}.
\end{equation}
By rescaling $f$ we will assume that $E\subset [-\pi,\pi]$ (see Claim \ref{clm: E in pi}).
The next decomposition is an extension of~\cite{FF}*{Obs. 1}, where it was assumed that $E$ is an interval.

\begin{clm}\label{clm: Riesz}
Suppose that condition \eqref{eq: AC cond} holds with $E\subseteq [-\pi,\pi]$.
Then, there exist $\Lambda=\{\lm_n\}\subset \Z$ of positive density $A$, and a constant $B>0$, such that $f(\lm_n) \overset{d}{=} B Z_n \oplus g_n$, where $Z_n$ are i.i.d. standard Gaussian random variables and $g_n$ is a Gaussian process on $\Z$.

\noindent
Moreover, given $\ep>0$ one can have $A=(1-\ep)\frac{|E|}{2\pi}$, $B= \sqrt{c(\ep) \nu |E|}$, where $c(\ep)>0$ is a constant depending only on $\ep$.
\end{clm}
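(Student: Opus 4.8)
The plan is to pass to the covariance (Gram) matrix of $f$ sampled along a well-chosen set $\Lambda\subset\Z$, and to reduce the desired splitting to a statement about Riesz sequences of exponentials in $\calL^2_E$. For any $\Lambda=\{\lambda_n\}\subset\Z$ the vector $(f(\lambda_n))_n$ is centered Gaussian with covariance $\big[\widehat\rho(\lambda_n-\lambda_m)\big]_{n,m}$, and by \eqref{eq: AC cond}
\[
\widehat\rho(\lambda_n-\lambda_m)=\nu\int_E e^{-i\lm(\lambda_n-\lambda_m)}\,d\lm+\widehat\mu(\lambda_n-\lambda_m)
=\nu\,\langle e_{\lambda_n},e_{\lambda_m}\rangle_{\calL^2_E}+\widehat\mu(\lambda_n-\lambda_m),
\]
where $e_\lm(x):=e^{-i\lm x}$. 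Since $\widehat\mu$ is positive-definite (Bochner), the matrix $\big[\widehat\mu(\lambda_n-\lambda_m)\big]$ is positive semidefinite, so it suffices to produce $\Lambda\subset\Z$ of positive density and a constant $B>0$ with $\nu\big[\langle e_{\lambda_n},e_{\lambda_m}\rangle_{\calL^2_E}\big]\succeq B^2 I$; that is, so that $\{e_\lm\}_{\lm\in\Lambda}$ is a Riesz sequence in $\calL^2_E$ with lower bound at least $B^2/\nu$. Granting this, $G:=\big[\widehat\rho(\lambda_n-\lambda_m)\big]-B^2 I\succeq 0$ is the covariance of some centered Gaussian process $(g_n)$ on $\Z$, and $(BZ_n+g_n)_n$, with $(Z_n)$ i.i.d.\ standard Gaussians independent of $(g_n)$, has covariance $B^2 I+G=\big[\widehat\rho(\lambda_n-\lambda_m)\big]$ — the same as $(f(\lambda_n))_n$ — so the two processes agree in law.

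It remains to construct $\Lambda$. Because $E\subseteq[-\pi,\pi]$, Parseval's identity on $L^2([-\pi,\pi])$ shows that $\{e_n\}_{n\in\Z}$ is a tight frame for $\calL^2_E$ consisting of vectors of equal norm $\sqrt{|E|}$, namely $\sum_n|\langle g,e_n\rangle_{\calL^2_E}|^2=2\pi\|g\|_{\calL^2_E}^2$ for every $g\in\calL^2_E$. As $|E|\le 2\pi$ this frame is redundant, and one may thin it out to an exponential Riesz sequence of prescribed density: by the Marcus--Spielman--Srivastava solution of the Kadison--Singer problem, in the sharp form for exponential systems (Nitzan--Olevskii--Ulanovskii), for every $\ep>0$ there is $\Lambda\subset\Z$ of uniform density $(1-\ep)\tfrac{|E|}{2\pi}$ for which $\{e_\lm\}_{\lm\in\Lambda}$ is a Riesz sequence in $\calL^2_E$ with lower bound $c(\ep)|E|$, where $c(\ep)>0$ depends only on $\ep$. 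Taking $B=\sqrt{c(\ep)\,\nu\,|E|}$ then yields exactly the stated parameters $A=(1-\ep)\tfrac{|E|}{2\pi}$ and $B=\sqrt{c(\ep)\,\nu|E|}$, and $A>0$ since $|E|>0$.

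The one substantial point is this last step: the existence of an exponential Riesz sequence in $L^2$ of a general bounded \emph{measurable} set $E$ with density arbitrarily close to the critical value $|E|/(2\pi)$. When $E$ is an interval this is classical (Beurling--Landau--Seip) and is what was used in \cite{FF}; the extension to arbitrary measurable $E$ is where the Kadison--Singer machinery is genuinely needed. If one is content with $A=c_0|E|$ for some absolute constant $c_0$ — which still suffices for the applications, cf.\ the linear dependence on $|E|$ in Corollary \ref{cor: up} — then the classical Bourgain--Tzafriri restricted invertibility theorem already produces such a $\Lambda$; obtaining the sharp constant $1/(2\pi)$ requires the Marcus--Spielman--Srivastava refinement. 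Everything else — the Gram-matrix decomposition, positive semidefiniteness of $\big[\widehat\mu(\lambda_n-\lambda_m)\big]$, and the matching of covariances — is routine.
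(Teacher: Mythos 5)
Your argument is correct and is essentially the paper's own proof: decompose the covariance matrix $\bigl[\widehat\rho(\lm_n-\lm_m)\bigr]$ as $\nu$ times the Gram matrix of $\{e_{\lm_n}\}$ in $\mathcal{L}^2_E$ plus the positive semidefinite matrix coming from $\mu$, extract a positive-density $\Lambda\subset\Z$ along which the exponentials form a Riesz sequence with lower bound $c(\ep)|E|$, and peel off $B^2 I$ to leave the covariance of the residual process $g$. The only divergence is the black box you invoke for the Riesz-sequence step: the paper gets the sharp density $A=(1-\ep)\tfrac{|E|}{2\pi}$ for arbitrary measurable $E$ from Vershynin's refinement of the Bourgain--Tzafriri restricted invertibility theorem (Theorem~\ref{thm: ver}), which predates Marcus--Spielman--Srivastava, so your remark that the near-critical density genuinely requires the Kadison--Singer machinery is not accurate --- though citing it does no harm.
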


The proof of Claim~\ref{clm: Riesz} is based on the following result by Bourgain and Tzafriri, which is a consequence of their celebrated ``Restricted Invertibility Theorem'' \cite{BT}. The `moreover' part is due to Vershynin \cite{Ver}*{Thm. 1.5}.

\begin{thma}[Bourgain, Tzafriri, Vershynin]\label{thm: ver}
Let $E\subseteq [-\pi,\pi]$ be a set of positive Lebesgue measure.
Then, there exist $\Lambda=\{\lm_n\}\subset \Z$ and constants $A, D>0$, such that:
\begin{enumerate}[{\rm (i)}]
\item\label{itm: I} $\forall \{a_n\}\in l^2(\Z): \quad D\sum|a_n|^2 \le \norm{\sum a_n e^{-i\lm_n x}}_{L^2(E)}^2 \le  \sum |a_n|^2$.
\item\label{itm: II} $\liminf_{N\to\infty} \frac {\left|\Lambda \cap [-N,N] \right|}{2N}>A$.
\end{enumerate}
Moreover, given $\ep > 0$ one can have $A=(1-\ep)\frac{|E|}{2\pi}$ and ${D=c(\ep)|E|}$, where $c(\ep)>0$ is a constant depending only on $\ep$.
\end{thma}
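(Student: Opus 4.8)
The statement is the harmonic--analysis corollary of the Bourgain--Tzafriri Restricted Invertibility Theorem, sharpened in its constants by Vershynin, so my plan is to \emph{deduce} it from the finite--dimensional Restricted Invertibility Theorem (RIT) rather than reprove that result. I would begin by recalling the finite--dimensional input in Vershynin's sharp form: if $v_1,\dots,v_n$ are vectors in a Hilbert space with $\|v_j\|=1$ for all $j$ and Bessel bound $\big\|\sum a_j v_j\big\|\le B\,\|a\|_2$, then for every $\ep\in(0,1)$ there is $\sigma\subseteq\{1,\dots,n\}$ with $|\sigma|\ge (1-\ep)\,n/B^2$ such that $\{v_j\}_{j\in\sigma}$ is a Riesz sequence with lower bound $c(\ep)$, i.e. $\big\|\sum_{j\in\sigma}a_j v_j\big\|^2\ge c(\ep)\,\|a\|_2^2$. (The original Bourgain--Tzafriri theorem is this with an absolute constant in place of $1-\ep$ and a non-explicit $c$; I take its proof as given.)

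The second step is to specialize RIT to the exponential system restricted to $E$ on a finite frequency window. Normalize Lebesgue measure on $[-\pi,\pi]$ so that $\{e^{-inx}\}_{n\in\Z}$ is orthonormal, and write $\delta=|E|/2\pi$. On the window $W_M=\{-M,\dots,M\}$ consider the vectors $w_n=\ind_E\,e^{-inx}\in L^2([-\pi,\pi])$. Then $\|w_n\|^2=\delta$ for every $n$, while Parseval on $[-\pi,\pi]$ gives $\big\|\sum_{n\in W_M}a_n w_n\big\|^2=\int_E\big|\sum a_n e^{-inx}\big|^2\le\sum|a_n|^2$, so the unit vectors $w_n/\sqrt{\delta}$ have Bessel bound $B=\delta^{-1/2}$. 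Applying RIT to this system yields $\sigma_M\subseteq W_M$ with $|\sigma_M|\ge (1-\ep)\,(2M+1)\,\delta$ and $\big\|\sum_{n\in\sigma_M}a_n e^{-inx}\big\|_{L^2(E)}^2\ge c(\ep)\,\delta\,\|a\|_2^2$, the matching upper bound $\le\sum|a_n|^2$ being automatic. Unwinding the normalization produces the asserted values $A=(1-\ep)|E|/2\pi$ and $D=c(\ep)|E|$. This already gives a finite version of item~(i) together with a \emph{local} density bound on the window.

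The third step, and the genuine obstacle, is to upgrade this to a single infinite $\Lambda\subset\Z$ that \emph{simultaneously} has positive lower density and retains the uniform lower Riesz bound. I would run the window construction along $M\to\infty$, encode each selection $\sigma_M$ as a point $\chi_M\in\{0,1\}^{\Z}$, and extract (by compactness of $\{0,1\}^{\Z}$ in the product topology) a subsequential pointwise limit $\chi$ with $\Lambda=\{n:\chi(n)=1\}$. The lower Riesz bound transfers to $\Lambda$ because any finitely supported coefficient vector supported in $\Lambda$ is, for $M$ large along the subsequence, supported in $\sigma_M$, so it inherits $\ge c(\ep)\,\delta\,\|a\|^2$, and this extends to all of $\ell^2(\Lambda)$ by density; the upper bound is free. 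The difficulty is that a crude pointwise limit can destroy positive density, so one must run the selection so as to control $|\sigma_M\cap[-N,N]|/2N$ uniformly in $M$ — either by a measure-theoretic compactness argument on the selections, or by splicing RIT selections on a growing family of blocks and using Riemann--Lebesgue decay of $\widehat{\ind_E}$ to make inter-block interactions negligible while keeping the blocks dense enough. Balancing density against almost-orthogonality between blocks is the heart of the Bourgain--Tzafriri argument, and I would follow their construction to obtain a single $\Lambda$ with $D^-(\Lambda)>A$ on which (i) holds with bounds $D$ and $1$.
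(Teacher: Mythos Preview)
The paper does not prove this theorem at all: it is quoted as a known result, attributed to Bourgain--Tzafriri (for the existence of $\Lambda$ with positive density and some $A,D>0$) and to Vershynin (for the sharp constants in the ``moreover'' clause), with references \cite{BT} and \cite[Thm.~1.5]{Ver}. There is therefore no in-paper proof to compare your proposal against; the authors use Theorem~A purely as a black box in the proof of Claim~\ref{clm: Riesz}.

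As for the content of your sketch: the first two steps are correct and give exactly the finite-window statement with the right constants. You have also correctly identified the real issue in Step~3 --- a naive pointwise (diagonal) limit of the selections $\sigma_M$ need not have positive lower density, so some additional structure is required. Your two suggested fixes (a compactness argument on the selection \emph{densities}, or a block construction controlling inter-block interference via decay of $\widehat{\ind_E}$) are in the right spirit, but as written they remain heuristics rather than a proof: you have not specified a mechanism that simultaneously preserves the uniform lower Riesz bound \emph{and} the density through the limit. In Bourgain--Tzafriri's original argument (and in Vershynin's refinement) this is handled by working directly with the infinite operator and applying restricted invertibility in a form that already yields an infinite $\Lambda$ with the stated density, rather than by gluing finite selections. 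If you want a self-contained argument, the cleanest route is to invoke Vershynin's theorem as stated (for operators on $\ell_2^n$ with $n\to\infty$, or its infinite-dimensional formulation), rather than to redo the passage to the limit by hand.
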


\begin{proof}[Proof of Claim~\ref{clm: Riesz}]
 Let $\ep>0$, and let $\Lambda=\{\lambda_n\}\subset \Z$ be the sequence whose existence is guaranteed by Theorem \ref{thm: ver}.
We have:
\[
\norm{ \sum_n a_n e^{-i\lm_n x}}^{2}_{L^2_{\rho}} = \sum_{n,m} a_n a_m\int e^{-i(\lm_n-\lm_m)x} d\rho = \sum_{n,m} a_n a_m r(\lm_n-\lm_m) =
a^T \Sigma a,
\]
where $\Sigma = ({r}(\lm_n-\lm_m) )_{n,m\in \Z}$ is the infinite covariance matrix of the Gaussian process $( f(\lm_n) )_{n\in\Z}$.
We note that this matrix is symmetric, as $r$ is symmetric.
By item \eqref{itm: I} of Theorem \ref{thm: ver}, and  condition \eqref{eq: AC cond},  we have for all $\{a_n\}\in l^2(\Z)$,
\[
D\nu\sum|a_n|^2 \le \nu\norm{\sum a_n e^{-i\lm_n x}}_{L^2(E)}^2 \le \norm{\sum a_n e^{-i\lm_n x}}_{L^2_{\rho}}^2 =
a^T \Sigma a.
\]
It follows that $\Sigma -\nu D I$ defines a positive-definite operator on
$\ell^2(\Z)$ (here $I(n,m)=\ind\{n=m\}$ is the identity).
Therefore, it is the covariance of some Gaussian process $g:\Z\to\R$ (see e.g. \cite{Lif}*{Sec. 4, Thm. 2}).
We obtain that
\[
f(\lm_n) \overset{d}= \sqrt{\nu D} Z_n \oplus g_n, \quad Z_n \sim \calN(0,1) \text{  i.i.d.}
\]
This establishes the result with $B=\sqrt{\nu D}=\sqrt{c(\ep) \nu |E|} $.
\end{proof}

The decomposition in Claim~\ref{clm: Riesz} will be useful to us together with the following two claims, the first of which appeared in \cite{FF}*{Prop. 3.1}.

\begin{clm}\label{clm: iid}
Let $(Z_j)_{j\in\Z}$ be a sequence of i.i.d. centered Gaussian random variables. Let $q, b_1,\dots,b_N \in\R$ be numbers such that $\frac{1}{N}\sum_{j=1}^N b_j \le q$. Then:
\begin{align*}
\Pro \left( Z_j +b_j{\geq} 0, \: 1\le j\le N \right)  \leq  \Pro(Z_1 \leq q)^N.
\end{align*}
\end{clm}

\begin{proof}
Write
$\Phi(b) :=\Pro(Z_1\le  b) = \Pro(Z_1 \geq -b)$.
One may check that
$x\mapsto \log \Phi(x)$ is monotone and concave (for $x>0$ it is straightforward, for $x\le 0$ one should use
the tail estimate in Lemma~\ref{lem: tail}(a)). Thus we have:
\begin{align*}
\log \Pro\left(Z_j+b_j \geq 0,\: 1\le j\le N\right)
= \sum_{j=1}^N \log \Phi(b_j)
\le N \cdot\log
\Phi\left({\frac{1}{N}\sum_{j=1}^N b_j }\right) \leq N\cdot \log \Phi(q).
\end{align*}

\end{proof}

\begin{clm}\label{average}
Let $f:\Z/N\Z\rightarrow \R$ be a function satisfying $\frac{1}{N}\sum_{n=0}^{N-1}f(n)\leq L$, for some $L\in\R, N\in\N$. Then, for every $S\subseteq \Z/N\Z$ there exists $\tau\in\Z/N\Z$, such that \[\frac{1}{|S|}\sum_{n\in S}f(n+\tau)\leq L.\]
\end{clm}
\begin{proof}
Let $g:\Z/N\Z\rightarrow \R$ be the function defined by $g(\tau)=\frac{1}{|S|}\sum_{n\in S}f(n+\tau)$. Then $\frac{1}{N}\sum_{\tau=0}^{N-1}g(\tau)\leq L$, which implies that
there exists $\tau\in\Z/N\Z$ such that $g(\tau)\leq L$.
\end{proof}

\subsubsection{Hilbert decomposition}\label{fourdec}
We turn to a different type of decomposition.
The next proposition gives a classical \emph{basis representation} of {GSP's}.

\begin{lem}\label{lem: basis}
Let $\rho$ be a symmetric, non-negative measure on $T^*$ with a finite positive moment, and let $\p_n$ be an orthonormal basis in $\mathcal{L}^2_\rho(T^*)$ which satisfies, for every $n\in\N$, $\p_n(-\lambda)=\overline{\p_n(\lambda)}$.
Denote $\psi_n(t) = \int_\R e^{-i\lm t}\p_n(\lm) d\rho(\lm)$. Then
\[
f(t) = \sum_n \zeta_n \psi_n(t), \quad \zeta_n \sim \calN(0,1) \text{  i.i.d.}
\]
is a continuous GSP over $T$ with spectral measure $\rho$.
\end{lem}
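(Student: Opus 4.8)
The plan is to verify that the random function $f(t) = \sum_n \zeta_n \psi_n(t)$ has the right distributional properties: that the series converges (almost surely and in a suitable sense), that it is a centered Gaussian process, that its covariance kernel is $\widehat\rho$, and finally that it admits an almost-surely continuous version. First I would address convergence. Since $\{\p_n\}$ is an orthonormal basis of $\calL^2_\rho(T^*)$, Parseval gives $\sum_n |\psi_n(t)|^2 = \sum_n |\langle e^{-i\lm t}, \ov{\p_n}\rangle_\rho|^2 = \|e^{-i\lm t}\|_{\calL^2_\rho}^2 = \rho(T^*) < \infty$ for every fixed $t$; hence $\sum_n \zeta_n \psi_n(t)$ converges in $L^2(\Omega)$ and almost surely (the terms are independent with summable variances, so Kolmogorov's one-series theorem applies). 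The condition $\p_n(-\lm) = \ov{\p_n(\lm)}$ together with symmetry of $\rho$ guarantees each $\psi_n$ is real-valued, so $f(t)$ is a genuine real random variable.

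Next I would compute the covariance. For fixed $t,s\in T$, using independence and $\E\zeta_n\zeta_m = \delta_{nm}$,
\[
\E[f(t)f(s)] = \sum_n \psi_n(t)\ov{\psi_n(s)} = \sum_n \langle e^{-i\lm t}, \ov{\p_n}\rangle_\rho \, \ov{\langle e^{-i\lm s}, \ov{\p_n}\rangle_\rho}.
\]
By Parseval applied to the two functions $\lm\mapsto e^{-i\lm t}$ and $\lm\mapsto e^{-i\lm s}$ in $\calL^2_\rho(T^*)$, this sum equals $\langle e^{-i\lm t}, e^{-i\lm s}\rangle_\rho = \int_{T^*} e^{-i\lm(t-s)}\,d\rho(\lm) = \widehat\rho(t-s) = r(t-s)$, which is exactly the covariance of a stationary process with spectral measure $\rho$. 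Since any finite linear combination $\sum_j c_j f(t_j) = \sum_n \zeta_n (\sum_j c_j \psi_n(t_j))$ is an $L^2$-limit of Gaussians and hence Gaussian, the finite marginals of $f$ are jointly centered Gaussian; thus $f$ is a centered GSP with spectral measure $\rho$.

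The main obstacle is the almost-sure continuity claim, since the naive series need not converge uniformly. Here I would invoke the standing assumption recorded in the paper — that GSP's with continuous covariance kernel are taken to have an a.s.-continuous version — noting that $r = \widehat\rho$ is continuous because $\rho$ is finite (dominated convergence), and that the finite positive moment $m_\delta < \infty$ gives the quantitative modulus of continuity via Observation~\ref{obs: short dist}: $r(0) - r(t) \le C(\delta) m_\delta |t|^\delta$, so $\E|f(t)-f(s)|^2 = 2(r(0)-r(t-s)) \lesssim |t-s|^{\min\{\delta,2\}}$. A standard Gaussian chaining / Kolmogorov–Chentsov argument then yields a continuous modification of $f$, and one checks that the series representation can be taken to converge uniformly on compacts along this modification (e.g. by a Lévy-type argument for Gaussian series, or simply by identifying $f$ with the canonical continuous GSP attached to $\rho$ via the uniqueness of the Gaussian law determined by $r$). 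Replacing $f$ by this modification completes the proof; the bookkeeping that the same $\{\zeta_n\}$ drive the continuous version is the only slightly delicate point, and it follows because $L^2$ convergence pins down $f(t)$ for each $t$ and both sides are continuous in $t$.
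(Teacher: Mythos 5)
Your proposal is correct and follows essentially the same route as the paper: a.s. convergence of the Gaussian series by standard arguments, the covariance identity $\sum_n \psi_n(t)\psi_n(s)=\widehat\rho(t-s)$ via Parseval, and almost-sure continuity from the finite positive moment of $\rho$ (the paper cites Kahane/Hough--Krishnapur--Peres--Vir\'ag for uniform convergence and Adler--Taylor for continuity, where you instead sketch the Kolmogorov--Chentsov and It\^o--Nisio-type steps explicitly).
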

 We note that every space  $\mathcal{L}^2_\rho(T^*)$, with a measure $\rho$ satisfying the requirements above, admits such an orthonormal basis. In this case, the condition on the elements of the basis, $\p_n(-\lambda)=\overline{\p_n(\lambda)}$, implies that the functions $ \psi_n(t)$ are real.

\begin{proof}
Standard arguments (see \cite{Kah}*{Chapter 3, Thm. 2} or \cite{GAFbook}*{Lemma 2.2.3}) yield that the series defining $f$ converges almost surely to a Gaussian function, with covariance
\[
K(t,s)= \E [f(t)f(s)] = \sum_n \psi_n(t)\psi_{n}(s).
\]

Denote $e_{t}(\lm)=e^{i\lm t}$. Since $\{\p_n\}$ is an orthonormal basis in $\mathcal{L}^2_\rho(T^*)$, we have

\[
\sum_n \psi_n(t)\psi_{n}(s) =\sum_n\langle\p_n,e_t\rangle_{\mathcal{L}^2_\rho(T^*)}\langle \p_{n}, e_s\rangle_{\mathcal{L}^2_\rho(T^*)}=
\langle e_t,e_s \rangle_{\mathcal{L}^2_\rho(T^*)} =\widehat{\rho}(t-s).
\]
Thus $f$ is stationary with spectral measure $\rho$. Almost sure continuity of $f$ follows from the moment condition on
$\rho$ (in fact, it follows from the weaker condition
$ \int \log^{1+\ep}(1+|\lm|) d\rho(\lm)<\infty$ for some $\ep>0$, see \cite{AT}*{Ch. 1.4.1}).
\end{proof}

From Lemma \ref{lem: basis} we deduce the following claim.
\begin{clm}\label{clm: one}
Let $\p \in \mathcal{L}^2_\rho$ be a real symmetric function such that $\|\p\|_{\mathcal{L}^2_\rho}=1$. Write $\psi (t) = \int_\R e^{-i\lm t}\p(\lm) d\rho(\lm)$, then we have the decomposition
\[
f(t) \overset d{=} \zeta \cdot \psi(t) \oplus g,
 \]
 where $\zeta\sim \calN (0,1)$ and $g$ is a Gaussian process which is independent of $\zeta$.
\end{clm}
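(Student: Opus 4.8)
The plan is to derive Claim~\ref{clm: one} as a special case of the basis representation in Lemma~\ref{lem: basis}, by choosing an orthonormal basis of $\mathcal{L}^2_\rho(T^*)$ whose first element is the prescribed function $\p$. Since $\|\p\|_{\mathcal{L}^2_\rho}=1$ and $\p$ is real and symmetric (hence $\p(-\lm)=\overline{\p(\lm)}$), one can extend $\{\p\}$ to an orthonormal basis $\{\p_n\}_{n\ge 1}$ of $\mathcal{L}^2_\rho(T^*)$ with $\p_1=\p$, and — this is the one point requiring a small remark — the extension can be taken so that each $\p_n$ also satisfies $\p_n(-\lm)=\overline{\p_n(\lm)}$. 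This symmetry condition is exactly the hypothesis needed in Lemma~\ref{lem: basis}, and it guarantees that the resulting functions $\psi_n$ are real.

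Concretely, I would first apply Lemma~\ref{lem: basis} with this basis to write $f(t)=\sum_n \zeta_n \psi_n(t)$ with $\zeta_n\sim\calN(0,1)$ i.i.d., where $\psi_n(t)=\int_{T^*} e^{-i\lm t}\p_n(\lm)\,d\rho(\lm)$. By construction $\psi_1=\psi$. Then I would set $\zeta:=\zeta_1$ and $g(t):=\sum_{n\ge 2}\zeta_n\psi_n(t)$. Since the $\zeta_n$ are independent, $g$ is independent of $\zeta$; and $g$ is itself a continuous Gaussian process (the tail of an almost surely convergent series of independent Gaussian terms, with the same convergence and continuity guarantees coming from the moment condition on $\rho$ as in the proof of Lemma~\ref{lem: basis}). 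This gives precisely the decomposition $f(t)\overset{d}{=}\zeta\cdot\psi(t)\oplus g$.

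The main (and essentially only) obstacle is the bookkeeping around the symmetry condition $\p_n(-\lm)=\overline{\p_n(\lm)}$: one must check that starting from a single symmetric unit vector $\p$, the Gram–Schmidt completion to an orthonormal basis can be carried out within the class of functions satisfying this conjugation-symmetry, so that Lemma~\ref{lem: basis} applies verbatim. This is routine — the conjugation $\p\mapsto(\lm\mapsto\overline{\p(-\lm)})$ is an isometric involution of $\mathcal{L}^2_\rho(T^*)$ fixing $\p$, so the orthogonal complement of $\p$ is invariant under it and admits an orthonormal basis of symmetric functions (real and imaginary parts of eigenvectors, or a real-structure argument) — and the excerpt already notes that every such $\mathcal{L}^2_\rho(T^*)$ admits a basis of the required type, so I would simply invoke that remark applied to $\{\p\}^\perp$. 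No further estimates are needed; everything else is a direct rephrasing of Lemma~\ref{lem: basis}.
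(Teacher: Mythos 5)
Your proposal is correct and is essentially the paper's own argument: the paper's proof consists of exactly the observation that $\p$ can be completed to an orthonormal basis satisfying the symmetry hypothesis of Lemma~\ref{lem: basis}, after which the decomposition follows by splitting off the first term of the basis expansion. Your extra remark justifying that the Gram--Schmidt completion can be done within the conjugation-symmetric class is a fine (and welcome) elaboration of a step the paper leaves implicit.
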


\begin{proof}
Such a function $\p$ can be completed into a basis of $\mathcal{L}^2_\rho$, which satisfies the conditions of Lemma \ref{lem: basis}. The result immediately follows.
\end{proof}

\subsection{Calculus of GSP's}
Next we discuss the relationship between the
spectral measure and differentiation or integration.
The derivative of a function $f:T\to\R$, where $T\in\{\Z,\R\}$, is defined to be
\begin{equation*}
\displaystyle f'(t) = \begin{cases}
\lim_{\delta\to 0} \frac{f(t+\delta)-f(t)}{\delta}, & T=\R\\
f(t+1)-f(t), & T=\Z.
\end{cases}
\end{equation*}

\begin{obs}[derivative]\label{obs: rate}
Suppose that $m_\delta<\infty$ for some $\delta>2$.
Then $f$ is a.s. continuously differentiable, and
$f'$ is a GSP with spectral measure $\mu$ defined by:
\begin{equation}
d\mu(\lm)=
\begin{cases}
\lm^2 d\rho(\lm), & T=\R \\
2(1-\cos \lm )d\rho(\lm) & T=\Z.
\end{cases}
\end{equation}
\end{obs}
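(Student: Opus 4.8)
The plan is to handle $T=\Z$ and $T=\R$ separately, since over $\Z$ there is nothing to prove about differentiability. For $T=\Z$ the process $f'(t)=f(t+1)-f(t)$ is a fixed linear combination of the jointly Gaussian vector $(f(t),f(t+1))$, hence automatically a centered stationary Gaussian process, and no moment hypothesis is needed (note $m_\delta<\infty$ for every $\delta$ anyway, since $\rho$ lives on $[-\pi,\pi]$). The only content is identifying its spectral measure, which I would do by the direct computation
\[
\E\big[f'(0)f'(t)\big]=2r(t)-r(t-1)-r(t+1)=\int_{-\pi}^{\pi}\big(2-e^{i\lm}-e^{-i\lm}\big)e^{-i\lm t}\,d\rho(\lm)=\int_{-\pi}^{\pi}2(1-\cos\lm)\,e^{-i\lm t}\,d\rho(\lm);
\]
since $2(1-\cos\lm)\ge0$ and $2(1-\cos\lm)\,d\rho$ is a finite symmetric measure, Bochner's theorem identifies it as the spectral measure of $f'$.

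For $T=\R$, the first step is to produce the derivative in the mean-square sense. I would first record that $m_2<\infty$, which follows from $|\lm|^2\le 1+|\lm|^\delta$ (valid for all $\lm$ once $\delta>2$), so $m_2\le m_0+m_\delta<\infty$. Using $m_2<\infty$, the difference quotients $h^{-1}\big(f(t+h)-f(t)\big)$ form a Cauchy family in $L^2(\Pro)$ as $h\to0$: writing $r=\wh\rho$, the cross-correlation of two such quotients based at points $t,s$ equals $\int e^{-i\lm(s-t)}\,\frac{e^{i\lm h}-1}{h}\cdot\frac{e^{-i\lm k}-1}{k}\,d\rho(\lm)$, and since $\big|\tfrac{e^{i\lm h}-1}{h}\big|\le|\lm|$ with pointwise limit $i\lm$, dominated convergence with dominating function $\lm^2$ shows this converges to $\int\lm^2 e^{-i\lm(s-t)}\,d\rho(\lm)$. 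Hence the $L^2$-limit $f'(t)$ exists, and $(f'(t))_{t\in\R}$ is a centered stationary Gaussian process with spectral measure $d\mu(\lm)=\lm^2\,d\rho(\lm)$.

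It remains to upgrade this to almost-sure continuous differentiability of the sample paths. The measure $\mu$ is symmetric, non-negative and finite ($\mu(\R)=m_2<\infty$), with a finite positive moment, since $m_{\delta-2}(\mu)=\int|\lm|^{\delta-2}\lm^2\,d\rho=m_\delta(\rho)<\infty$ and $\delta-2>0$; hence by Lemma~\ref{lem: basis} the GSP with spectral measure $\mu$ admits an almost surely continuous version, which I fix as the version of $f'$. To see that this continuous process is genuinely a derivative of $f$, I would form the pathwise-defined, a.s.\ $C^1$ process $G(t):=f(0)+\int_0^t f'(s)\,ds$ and check $G(t)=f(t)$ a.s.\ for each fixed $t$: both sides are centered Gaussian, and a Fubini computation using $\E[f'(s)f(u)]=\partial_s r(s-u)$ and $\E[f'(s)f'(s')]=-r''(s-s')$ (both matching the spectral measure $\mu$ just found) yields $\E[(G(t)-f(t))^2]=0$. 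Two a.s.-continuous processes agreeing a.s.\ at every fixed time agree identically a.s., so a.s.\ $f=G$ is $C^1$ with derivative the continuous version of $f'$, whose spectral measure is $\lm^2\,d\rho$.

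The delicate step is the last one: passing from mean-square differentiability — which needs only $m_2<\infty$ — to almost-sure continuous differentiability, and this is exactly where the strict inequality $\delta>2$ enters, through the a.s.\ continuity clause of Lemma~\ref{lem: basis} applied to $\mu$. An alternative to the antiderivative argument would be to differentiate the basis expansion $f=\sum_n\zeta_n\psi_n$ of Lemma~\ref{lem: basis} term by term and invoke locally uniform convergence of the differentiated Gaussian series; but justifying that the differentiated series indeed represents $f'$ appears no shorter than the route above.
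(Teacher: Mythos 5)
Your proof is correct, and for $T=\R$ it takes a genuinely more self-contained route than the paper. The discrete case is identical to the paper's computation. For the continuous case the paper simply cites \cite{AT}, Ch.~1.4.1 for the almost-sure continuous differentiability and then obtains $d\mu=\lm^2\,d\rho$ by formally differentiating $\E[f(t)f(s)]=\widehat\rho(t-s)$ once in $t$ and once in $s$ (without justifying the interchange of derivative and integral). You instead prove everything from first principles: $m_2\le m_0+m_\delta<\infty$, mean-square differentiability via Cauchy difference quotients with $\lm^2$ as dominating function (which simultaneously identifies the spectral measure rigorously), and the upgrade to a.s.\ $C^1$ via the continuity clause of Lemma~\ref{lem: basis} applied to $\mu$ together with the antiderivative identity $f(t)=f(0)+\int_0^t f'$, verified by the second-moment computation $\E[(G(t)-f(t))^2]=0$ (which indeed checks out: $\E[G(t)f(t)]=\E[G(t)^2]=\E[f(t)^2]=r(0)$). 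What the paper's approach buys is brevity; what yours buys is a complete argument that also legitimizes the formal differentiation under the integral. The one point where you should add a sentence of care: Lemma~\ref{lem: basis} constructs \emph{some} continuous GSP with spectral measure $\mu$ on a fresh probability space, whereas your antiderivative argument needs the $L^2$-derivative process $f'$ \emph{on the same space as $f$} to admit a continuous modification. This follows because a.s.\ continuity of a separable Gaussian process is a property of its covariance alone (equivalently, the entropy/Kolmogorov criterion used inside Lemma~\ref{lem: basis} depends only on $\mu$), so the criterion transfers from the constructed process to $f'$; it is standard, but as written it is the only step of your argument that is not fully spelled out.
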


\begin{proof}
In the case $T=\R$, the fact that $f'$ exists a.s. and is a stationary continuous Gaussian process follows from {the} moment condition
(see \cite{AT}*{Ch. 1.4.1}).
Differentiating the relation
$\E[f(t) f(s)] = \int_\R e^{-i\lm (t-s)}d\rho(\lm)$
once by $t$ and once by $s$ yields
\[
\widehat{\mu}(t-s)=\E[f'(t)f'(s)] = \int_\R e^{-i\lm(t-s)}\lm^2 d\rho(\lm).
\]

In the case $T=\Z$, differentiability is immediate and we compute:
\begin{align*}
\widehat{\mu}(m-n)&=\E[ f'(m) f'(n) ] =\ \E[ (f(m+1)-f(m))(f(n+1)-f(n))] \\
&=2r(m-n)- r(m-n+1)- r(m-n-1) \\
&= \int (2- e^{-i\lm}- e^{i\lm}) e^{-i\lm(m-n)} d\rho(\lm)=\int e^{-i\lm(m-n)}2(1-\cos \lm) d\rho(\lm).
\end{align*}
\end{proof}

\begin{obs}[stationary anti-derivative]\label{obs: integ}
Suppose that $m_{-2}<\infty$ and $m_\delta<\infty$ for some $\delta>0$.
Then there exists a GSP $F:T\to\R$ such that $F' \overset{d}{=} f $.
\end{obs}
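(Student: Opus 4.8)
The plan is to build $F$ from its spectral measure, which is essentially dictated by Observation~\ref{obs: rate}: if $F$ is a GSP with spectral measure $\rho_F$, then $F'$ has spectral measure $w(\lm)\,d\rho_F(\lm)$, where
\[
w(\lm)=\begin{cases}\lm^2,& T=\R,\\ 2(1-\cos\lm),& T=\Z.\end{cases}
\]
So I would simply \emph{define} $d\rho_F(\lm):=w(\lm)^{-1}\,d\rho(\lm)$, verify that it is an admissible spectral measure, and then read Observation~\ref{obs: rate} in reverse.

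The first point to check is that $\rho_F$ makes sense as a measure. Since $m_{-2}=\int|\lm|^{-2}\,d\rho<\infty$, the measure $\rho$ can have no atom at the origin; as $w$ is finite and strictly positive off the origin, $w^{-1}$ is finite $\rho$-a.e., so $\rho_F$ is a well-defined non-negative Borel measure on $T^*$, and it is symmetric because $\rho$ is symmetric and $w$ is even. Next I would check finiteness: over $\R$ one has $\rho_F(\R)=\int|\lm|^{-2}\,d\rho=m_{-2}<\infty$, and over $\Z$, using that $2(1-\cos\lm)/\lm^2\to 1$ as $\lm\to 0$ and that $2(1-\cos\lm)$ is continuous and positive on $[-\pi,\pi]\setminus\{0\}$, there is $c>0$ with $2(1-\cos\lm)\ge c\lm^2$ on $[-\pi,\pi]$, whence $\rho_F([-\pi,\pi])\le c^{-1}m_{-2}<\infty$. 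Finally, $\rho_F$ carries a finite positive moment: over $\Z$ this is automatic since $T^*$ is compact, while over $\R$, for the $\delta>0$ with $m_\delta(\rho)<\infty$ one computes $m_{\delta+2}(\rho_F)=\int|\lm|^{\delta+2}|\lm|^{-2}\,d\rho=m_\delta(\rho)<\infty$.

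With $\rho_F$ a finite symmetric non-negative measure possessing a finite positive moment, Lemma~\ref{lem: basis} yields a continuous GSP $F$ on $T$ with spectral measure $\rho_F$. Since $m_{\delta+2}(\rho_F)<\infty$ with $\delta+2>2$ over $\R$ (and all moments of $\rho_F$ are finite over $\Z$), Observation~\ref{obs: rate} applies to $F$ and gives that $F'$ is a GSP with spectral measure $w(\lm)\,d\rho_F(\lm)=d\rho(\lm)$. A GSP is determined in distribution by its spectral measure (equivalently, by its covariance kernel), so $F'\overset{d}{=}f$, which is the claim.

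There is no serious obstacle here; the only step needing a little care is the finiteness of $\rho_F$, which rests on the two elementary facts just used: that $m_{-2}<\infty$ forces $\rho(\{0\})=0$, and that $1-\cos\lm\asymp\lm^2$ on $[-\pi,\pi]$ in the discrete case.
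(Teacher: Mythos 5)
Your proposal is correct and is essentially the paper's own proof: both define the anti-derivative's spectral measure by dividing $d\rho$ by $\lm^2$ (resp. $2(1-\cos\lm)$), verify it is a finite symmetric non-negative measure with a finite positive moment, and then invoke Observation~\ref{obs: rate} in reverse. You merely make explicit two small points the paper leaves implicit (that $m_{-2}<\infty$ rules out an atom at $0$, and that $1-\cos\lm\asymp\lm^2$ on $[-\pi,\pi]$ in the discrete case), which is fine.
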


\begin{proof}
Let $\mu$ be the measure defined by

\begin{equation}
d\mu(\lm)=
\begin{cases}
\frac{d\rho(\lm)}{\lm^2}, & T=\R \\
\frac{d\rho(\lm)}{2(1-\cos \lm )} & T=\Z.
\end{cases}
\end{equation}
By the premise $\mu$ is a finite, non-negative, symmetric measure, and therefore defines a GSP which we denote by $F$.
Moreover, in the case $T=\R$ we have $m_{2+\delta}(\mu) = m_\delta(\rho)<\infty$. Thus by Observation~\ref{obs: rate} it follows that $F$ is a.s. continuously differentiable and $F' \overset{d}{=} f$.
\end{proof}

Observation \ref{obs: integ} asserts that if $m_{-2}<\infty$ then the anti--derivative process is stationary, and in particular its variance is uniformly bounded. The next lemma, which is a generalization of Proposition 3.2 in \cite{FF}, provides estimates for the variance of the anti-derivative of a GSP even when the latter is not stationary.
We formulate and prove it in continuous time, noting that the discrete analogue follows by simple modifications.

\begin{lem}[general anti-derivative]\label{lem: average}
Let $b\ge 0$ and $\g\in[0,2)$.
Suppose that $\rho([0,\lm])\le b\lm^\g$ for all $\lm>0$.
Then for all $N>0$:
\[
\var\left( \int_0^N f(t) dt \right)\le
b \, C(\g) \, \var(f(0)) \cdot
N^{2-\g} ,
\]
where $C(\g) = \frac {16}{2-\g}$.
\end{lem}

\begin{proof}
Without loss of generality assume $\var ( f(0) )=1$.
 We calculate the variance:
\begin{align*}
 \var \left(\int_0^N f(t) dt \right)&=\E\left[ \left(\int_0^N f(t) dt\right)^2 \right]
  = \iint_{[0,N]^2} \E (f(t) f(s)) dt\ ds\\
  &=\int_0^N \int_0^N \widehat{\rho}(t-s) dt\ ds
  =N \int_{|t|<N}\left(1-\frac {|t|}{N}\right) \widehat{\rho}(t)dt.
\end{align*}
The change in order of integration and expectation follows from Fubini's theorem.
The inverse Fourier transform of ${N} (1-\frac {|t|}{N})\ind_{[-N,N]}(t)$ is given by $N^2\ \sinc^2(\tfrac{N}{2}\lm)$
where $\sinc(x) = \tfrac{\sin x}{x}$ (the definition of Fourier transform is given in \eqref{eq: r rho}). We use the estimate
\[
\sinc^2( N \lm/2) \le  \begin{cases}
1, & |\lm|\le \tfrac 1 N, \\
4 (N\lm)^{-2},  & |\lm|> \tfrac 1 N,
\end{cases}
\]
combined with Parseval's formula \cite{Katz}*{Sec 2.2}, to get:
\begin{align*}
\var \left( \int_0^N f(t) dt \right)
&= N^2 \int_\R \sinc^2( N\lm/2) d\rho(\lm)
\\& \le  2N^2 \rho([0,\tfrac 1 N]) + 8 \int_{1/N}^\infty  \frac {d\rho(\lm)}{\lm^2}
\\& = 2N^2\rho([0,\tfrac 1 N])+8  \left( 2\int_{1/N}^\infty \frac{\rho([0,\lm])}{\lm^3}d\lm - N^2\rho([0,\tfrac 1 N])  \right)
\\&
\le 16 b\int_{1/N}^{\infty}\lm^{\g -3} d\lm
\le \frac{16 b }{2-\g} N^{2-\g}.
\end{align*}


\end{proof}

Lastly we need estimates on the supremum of a GSP and its anti-derivative. We achieve this using Dudley's metric entropy bound \cite{AT}*{Thm. 1.3.3}, which reads as follows.
For a Gaussian process $H$ on $I$, we define a canonical semi-metric by
$
d_H(a,b) :=\sqrt{ \E ( H(a) - H(b) )^2 }.
$
For any $\ep>0$, let $N(\ep)$ be the minimal number of $d_H$-balls of radius $\ep$ which cover $I$. Then Dudley's bound states that there exists a universal constant $K>0$ such that
\begin{equation}\label{eq: dudley}
\E \sup_I H \le K \int_0^{\text{diam(I)}} \sqrt{\log N(\ep)} d\ep,
\end{equation}
where $\text{diam(I)}$ is the diameter of $I$ under $d_H$.
The following lemmas are applications of this bound.
  
  \begin{lem}\label{lem: sup stat}
Let $f$ be a GSP for which $m_0 = \int d\rho(\lm)$ and $m_2=\int \lm^2 d\rho(\lm)<\infty$. Denote $a=\sqrt{\frac{m_2}{4m_0}}$. Then there is a universal constant $K>0$ such that for all $N>1$ we have
\[
\E \left(\sup_{ [0,N]} f \right) \le K \sqrt{m_0 \cdot\max\{\log (a N ),1\}},
\]
\end{lem}

\begin{proof}
By stationarity we have $d_f(x,y) =  \sqrt{2(r(0) - r(x-y))}$, from which we deduce $\text{diam}([0,N])\le \sqrt{4m_0}$. Moreover, by Observation~\ref{obs: short dist},
$d_f(x,y)  \le \sqrt{m_2} |x-y|$, which implies that $N(\ep) \le \max(1, \sqrt{m_2} \frac{N}{ \ep})$ and $\text{diam}([0,N])\le \sqrt{m_2} N$.
We consider two cases. If $2\sqrt{4 m_0}\leq \sqrt{m_2}N$, then by Dudley's bound
\[
\E \sup_{[0,N]} f \le K\int_0^{\sqrt{4m_0}} \sqrt{\log  \left( \frac{\sqrt{m_2} N}{ \ep } \right) }d\ep
= K\sqrt{m_2}  N \int_{\sqrt{\tfrac{m_2}{4m_0}} N}^{\infty} \frac{\sqrt{\log u}}{u^{2}} du.
\]
Note that for {$A>1$} one has:
\[
\int_A^\infty \frac{\sqrt {\log u}}{u^{2}} du
= \frac{\sqrt{\log A}}{A} + \frac 1 2
\int_{A}^\infty \frac{1 }{u^2\sqrt{\log u}} du
 \le \frac{\sqrt{\log A}}{A}\left(1+ \frac 1 {2\log A}\right),
\]
{which implies that under these conditions} $\E \sup_{[0,N]} f \le {\widetilde{K}} \sqrt{m_0\log (a N)}$ where $a=\sqrt{\frac{m_2}{4 m_0}}$ and ${\widetilde{K}}$ is some universal constant.
On the other hand, if $2\sqrt{4m_0}\geq \sqrt{m_2}N$ Dudley's bound gives
\[
\E \sup_{[0,N]} f \le K\int_0^{\sqrt{m_2}N} \sqrt{\log  \left( \frac{\sqrt{m_2} N}{ \ep } \right) }d\ep
= K\sqrt{m_2}  N \int_{1}^{\infty} \frac{\sqrt{\log u}}{u^{2}} du
\le {\widetilde{K}} \sqrt{m_0},
\]
so the desired bound holds.
\end{proof}

\begin{lem}\label{lem: sup H}
Let $f$ be a GSP such that $\rho([0,\lm]) \le b\lm^\g$ for some $0\le \g < 2$ and all $\lm>0$. Then, for all $N>1$:
\[
\E \sup_{x\in [0,N]} \left(\int_0^x f(t) dt  \right) \le c(\g)\sqrt{b m_0}\, N^{1-\frac{\g}{2}},
\]
where $c(\g)>0$ is a constant depending only on $\g$ and $m_0=\int d\rho$.
\end{lem}

\begin{proof}
Denote $H(x) = \int_{0}^x f(t) dt$. Then $H$ is a Gaussian process, whose canonical semi-metric may be bounded by Lemma \ref{lem: average}:
\[
d_H(x,y) = \sqrt{\var \left(\int_x^y f(t) dt \right)} = \sqrt{\var \left(\int_0^{y-x}f(t) dt\right) } \le C \cdot|y-x|^{1-\frac {\g}{2}},
\]
where $C= c_0(\g) \sqrt{b m_0}$.
 From now on $c_j$, $j\in \N$, will denote constants which depend only on~$\g$.
Denoting $\al:=1-\frac{\g}{2}$,
we see that $\text{diam}([0,N]) \le C N^{\al}$, and $N(\ep) \le
\max\left(1,
\frac N {(\ep/C)^{1/\al}} \right)$ (by taking balls of Eucleadian radius $(\ep/C)^{1/\al}$).
By Dudley's bound \eqref{eq: dudley}, for $0\le \g < 2$ we have:
\begin{align*}
\E \sup_{x\in [0,N]} H(x)
& \le K \int_0^{C N^{\al}}
\sqrt{\log \left(\frac {C^{1/\al} N}{ \ep^{1/\al}} \right)}d\ep
{=\frac{K}{\sqrt{\al}}\int_0^{CN^\al} \sqrt{\log \left( \frac{CN^\al}{\ep} \right)} d\ep }\\
& = c_1(\g) C N^{\al} \int_{1}^\infty \frac{\sqrt{\log u}}{u^{2}} du
= c_2(\g) \sqrt{b m_0}  N^{1-\tfrac \g 2}.
\end{align*}


\end{proof}

\subsection{Ball and tail estimates}

The terms ``ball'' and ``tail'' events refer to a stochastic process remaining inside or outside a ball, respectively.
These have been immensely studied, see e.g. \cite{Li-Shao}.
The following bounds, which will be repeatedly used, are ball and tail estimates for the one-dimensional Gaussian variable $Z\sim \calN(0,1)$.

\begin{lem}\label{lem: tail}

 For all $x>0$:
\begin{enumerate}[{\rm (a)}]
\item $\frac 1{\sqrt{2\pi}}\left(\frac 1 x - \frac 1{x^3}\right) e^{-x^2/2} \le  \Pro(Z>x) \le \frac 1{\sqrt{2\pi}}\frac 1 x  e^{-x^2/2}$.\\
In particular, for $x\ge 2:\quad e^{-x^2} \le \Pro(Z> x)\le e^{-x^2/2}.$
\item $\sqrt{\frac{2}{\pi}} x e^{-x^2/2} \le \Pro(|Z| \le  x ) \le x.$ \\
In particular, for $0<x\le 1: \quad \frac 1 4 x \le  \mathbb{P}(|Z|\leq x )\le x$.
\end{enumerate}
\end{lem}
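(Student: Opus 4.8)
Both estimates reduce to controlling the Gaussian integrals $I_1(x)=\int_x^\infty e^{-t^2/2}\,dt$ and $I_2(x)=\int_0^x e^{-t^2/2}\,dt$, since $\Pro(Z>x)=\frac1{\sqrt{2\pi}}I_1(x)$ and $\Pro(|Z|\le x)=\frac{2}{\sqrt{2\pi}}I_2(x)$. The plan is entirely elementary calculus: use monotonicity of the integrand together with one clever auxiliary antiderivative for the lower tail bound.

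For part (a), the upper bound follows from $e^{-t^2/2}\le \frac{t}{x}e^{-t^2/2}$ for $t\ge x$, whence $I_1(x)\le \frac1x\int_x^\infty t e^{-t^2/2}\,dt=\frac1x e^{-x^2/2}$. For the lower bound I would observe that $\frac{d}{dt}\!\left[-\left(\frac1t-\frac1{t^3}\right)e^{-t^2/2}\right]=\left(1-\frac{3}{t^4}\right)e^{-t^2/2}\le e^{-t^2/2}$, so integrating from $x$ to $\infty$ gives $I_1(x)\ge\left(\frac1x-\frac1{x^3}\right)e^{-x^2/2}$. Multiplying both through by $\frac1{\sqrt{2\pi}}$ yields the displayed two-sided bound. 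The ``in particular'' clause for $x\ge 2$ then requires only a numerical check: $\frac{1}{\sqrt{2\pi}\,x}\le 1$ gives the upper bound $\Pro(Z>x)\le e^{-x^2/2}$, and for the lower bound one checks that $r(x):=\frac{1}{\sqrt{2\pi}\,x}\bigl(1-\tfrac1{x^2}\bigr)e^{x^2/2}$ satisfies $r(2)\ge 1$ and $\frac{d}{dx}\log r(x)=x-\frac1x+\frac{2}{x^3-x}>0$ on $[2,\infty)$, so $r(x)\ge 1$ throughout, i.e. $\Pro(Z>x)\ge e^{-x^2}$.

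For part (b), on $[0,x]$ we have $e^{-x^2/2}\le e^{-t^2/2}\le 1$, so $x e^{-x^2/2}\le I_2(x)\le x$; multiplying by $\frac{2}{\sqrt{2\pi}}=\sqrt{\tfrac2\pi}$ gives $\sqrt{\tfrac2\pi}\,x e^{-x^2/2}\le \Pro(|Z|\le x)\le \sqrt{\tfrac2\pi}\,x\le x$. The ``in particular'' clause for $0<x\le 1$ is then immediate: the upper bound is inherited, and for the lower bound $e^{-x^2/2}\ge e^{-1/2}$ together with $\sqrt{\tfrac2\pi}\,e^{-1/2}>\tfrac14$ gives $\Pro(|Z|\le x)\ge\tfrac14 x$.

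There is no real obstacle here — the only point requiring mild attention is the numerical verification in the ``in particular'' clauses, particularly the bound $\Pro(Z>x)\ge e^{-x^2}$ for $x\ge 2$, which is the tightest of the stated inequalities (near-equality at $x=2$) and so needs the monotonicity argument rather than a crude constant estimate.
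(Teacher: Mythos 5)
Your proof is correct; the paper itself omits the argument, citing the standard Gaussian tail bound \cite[eq.\ (1.2.2)]{AT} for part (a) and calling part (b) a straightforward integral estimate, and your write-up is exactly that standard argument (the antiderivative $-\bigl(\tfrac1t-\tfrac1{t^3}\bigr)e^{-t^2/2}$ for the lower tail, monotonicity of the integrand for the ball estimate). The numerical verifications in the ``in particular'' clauses, including the monotonicity check of $r(x)$ needed near $x=2$, are also correct.
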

We omit the proof, as its first part is a standard bound on the Gaussian tail (see~\cite{AT}*{eq. (1.2.2)}), while the second part follows from a straightforward integral estimate.

The estimates in Lemma \ref{lem: tail} imply the following comparison of tail probabilities.
\begin{clm}\label{clm: tails comp}
For any $\delta>0$ there exists $\theta>0$ such that $\Pro\big(Z\le x\big) \le \Pro\big(|Z|\le \theta x\big)$ for all $x>\delta$.
\end{clm}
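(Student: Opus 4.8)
Since $\Pro(Z\le x)=1-\Pro(Z>x)$ and $\Pro(|Z|\le \theta x)=1-\Pro(|Z|>\theta x)$, the asserted inequality is equivalent to
\[
\Pro(|Z|>\theta x)\le \Pro(Z>x)\qquad\text{for all }x>\delta.
\]
Moreover, for fixed $x$ the left-hand side is decreasing in $\theta$, so it suffices to produce, for each of two complementary ranges of $x$, a value of $\theta$ that works on that range, and then take $\theta$ to be the maximum of the two. I will use the one-dimensional estimates of Lemma~\ref{lem: tail}, together with $\Pro(|Z|>y)=2\Pro(Z>y)$.

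\textbf{Large $x$.} Fix $\theta_0=2$ (any $\theta_0\ge 1$ large enough works). For $x\ge 2$ we have $\theta_0 x\ge 2$, so Lemma~\ref{lem: tail}(a) gives $\Pro(Z>\theta_0 x)\le e^{-(\theta_0 x)^2/2}=e^{-2x^2}$ and $\Pro(Z>x)\ge e^{-x^2}$. Hence
\[
\Pro(|Z|>\theta_0 x)=2\Pro(Z>\theta_0 x)\le 2e^{-2x^2}\le e^{-x^2}\le \Pro(Z>x),
\]
where the middle inequality holds because $2e^{-2x^2}\le e^{-x^2}\iff e^{x^2}\ge 2$, which is true for $x\ge 2$. (If $\delta\ge 2$ this range already covers $(\delta,\infty)$ and we are done with $\theta=\theta_0$.)

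\textbf{Bounded $x$.} Assume now $\delta<2$ and consider $x\in(\delta,2]$. Here $\Pro(Z>x)\ge \Pro(Z>2)=:p_0>0$, while $\Pro(Z>\theta\delta)\to 0$ as $\theta\to\infty$; choose $\theta_1>0$ with $2\Pro(Z>\theta_1\delta)\le p_0$. Then, using monotonicity of $y\mapsto\Pro(|Z|>y)$,
\[
\Pro(|Z|>\theta_1 x)\le \Pro(|Z|>\theta_1\delta)=2\Pro(Z>\theta_1\delta)\le p_0\le \Pro(Z>x).
\]
Taking $\theta:=\max\{\theta_0,\theta_1\}$ and again invoking that $\Pro(|Z|>\theta x)$ decreases in $\theta$, the inequality $\Pro(|Z|>\theta x)\le\Pro(Z>x)$ holds on $(\delta,2]$ and on $[2,\infty)$, hence on all of $(\delta,\infty)$, which is the claim.

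\textbf{Main obstacle.} There is no deep difficulty; the only point requiring care is \emph{uniformity} of $\theta$ over the full range $x>\delta$: a single $\theta$ must simultaneously control the exponential decay of $\Pro(|Z|>\theta x)$ against $\Pro(Z>x)$ for large $x$ (which forces $\theta$ to be noticeably larger than $1$, because of the extra factor $2$) and the non-degeneracy of $\Pro(|Z|\le\theta x)$ near the left endpoint $x=\delta$ (which forces $\theta\gtrsim 1/\delta$ when $\delta$ is small). Splitting at a fixed threshold and using the explicit tail bounds of Lemma~\ref{lem: tail} resolves both at once.
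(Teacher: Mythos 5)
Your proof is correct and follows essentially the same route as the paper's: both reformulate the claim as $\Pro(|Z|>\theta x)\le\Pro(Z>x)$, split at $x=2$, use the two-sided tail bounds of Lemma~\ref{lem: tail}(a) for the large-$x$ range, and choose $\theta_1$ from the value $\Pro(Z>2)$ to handle $x\in(\delta,2]$, taking $\theta=\max\{2,\theta_1\}$. No issues.
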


\begin{proof}
We first note that the inequality in the statement can be rewritten as $\Pro\big(Z> x\big)\ge \Pro(|Z|>\theta x)$. Let $\delta>0$. There exists $\theta_1=\theta_1(\delta)$ such that $ \Pro(Z>2) \ge  \Pro(|Z|>\theta_1\delta)$. Set $\theta=\max\{2,\theta_1\}$.
To show that the inequality above holds for all $x\geq \delta$ we consider two cases. First, assume that $x\geq 2$. Then,
by part (a) of Lemma \ref{lem: tail} we have
\[
\Pro(Z>x) \ge  e^{-x^2} \ge 2 e^{-(2 x)^2/2} \ge  \Pro(|Z|>2 x)\ge  \Pro(|Z|>\theta x).
\]
Next, consider the case where $\delta\leq x\leq 2$. Then,
\[
\Pro(Z>x) \ge  \Pro(Z>2) \ge  \Pro(|Z|>\theta_1\delta) \ge  \Pro(|Z|>\theta x).
\]
\end{proof}

\medskip

Now we turn to bound the ball probability of a Gaussian process.
For discrete time this is given by the Khatri-Sidak inequality \cite{ineq}*{Ch. 2.4}, which is a particular case of the recently proved Gaussian correlation inequality \cites{LM, Roy}.
\begin{lem}[Khatri-Sidak]\label{lem: KS}
 for any $\ell>0$ and any centered Gaussian vector $Z$ one has:
\begin{equation*}
\Pro(|Z_j|\le \ell, \quad j=1,\dots,N) \ge \prod_{j=1}^N\Pro(|Z_j|\le \ell).
\end{equation*}
\end{lem}
The next lemma extends this inequality to continuous time, provided that $\ell$ is large enough.
We use the standard ``chaining method'', which is nicely presented in \cite{Li-Shao}.

\begin{lem}[Large ball]\label{lem: prod}
Let $h$ be a GSP over $\R$ which satisfies $m_\delta<\infty$ with a given $\delta>0$. Then there exist $\ell_0>0$ and $c\ge 1$ such that
for all $\ell>\ell_0$ and all $N\in\N$ the following holds:
\[
\Pro\left(|h(t)|\le \ell, \;\; \forall t\in[0,N]\right) \ge \Pro\left( c |h(0)|\le \ell \right)^N.
\]
The constants $c$ and $\ell_0$ depend only on $\delta$, $m_\delta$ and $m_0$.
\end{lem}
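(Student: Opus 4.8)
The plan is to deduce the continuous-time bound from the discrete Khatri--Sidak inequality (Lemma~\ref{lem: KS}) by a chaining argument that controls the fluctuations of $h$ between lattice points. Write $M_j := \sup_{t\in[j,j+1]} |h(t)-h(j)|$ for $j=0,\dots,N-1$, and split the event $\{|h(t)|\le \ell \ \forall t\in[0,N]\}$ using
\[
\{|h(j)|\le \ell/2 \ \forall j\in\{0,\dots,N\}\} \cap \{M_j \le \ell/2 \ \forall j\} \subseteq \{|h(t)|\le \ell \ \forall t\in[0,N]\}.
\]
The idea is that each $M_j$ has the same distribution (by stationarity) and, by Observation~\ref{obs: short dist} together with Dudley's bound as in Lemma~\ref{lem: sup stat}, has a bounded mean $\mu_0 := \E M_0$ and a Gaussian tail: $\Pro(M_j \ge t) \le e^{-c t^2}$ for $t$ large, with constants depending only on $\delta, m_\delta, m_0$ (using the Borell--TIS concentration inequality for the supremum of a Gaussian process, which follows from the tools in the paper since $\var$ of the increments is controlled by $m_\delta$).

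First I would make the two events independent-looking by conditioning: since we cannot assume independence of the lattice values from the fluctuations, instead I will bound
\[
\Pro\big(|h(t)|\le \ell \ \forall t\in[0,N]\big) \ge \Pro\big(|h(j)|\le \tfrac{\ell}{2} \ \forall j, \ M_j \le \tfrac{\ell}{2} \ \forall j\big),
\]
and control the right-hand side by a union-type / Khatri--Sidak-type argument. The cleanest route: apply the Gaussian correlation inequality (Lemma~\ref{lem: KS}, valid for the full Gaussian vector $(h(j))_j$ together with the suprema, or more simply work coordinatewise) to get a product lower bound. Concretely one can use $\Pro(A\cap B)\ge \Pro(A) - \Pro(B^c)$ is too weak for a product bound; instead I would use that the event $\{M_j\le \ell/2\}$ has probability $\ge 1 - e^{-c\ell^2}$ for $\ell > \ell_0$, and then absorb the factor $\prod_j (1-e^{-c\ell^2}) \ge (1 - e^{-c\ell^2})^N$ into the final product by choosing $c\ge 1$ large enough so that $(1-e^{-c\ell^2}) \cdot \Pro(|h(0)|\le \ell/2) \ge \Pro(c|h(0)|\le \ell)$ for all $\ell>\ell_0$; this last inequality is a routine comparison using Lemma~\ref{lem: tail}(b), since $\Pro(|h(0)|\le \ell/2)$ and $\Pro(c|h(0)|\le \ell)$ are comparable up to a constant factor while $1-e^{-c\ell^2}$ is bounded away from $0$.

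So the steps in order are: (1) introduce the increment suprema $M_j$ and the inclusion of events above; (2) use Observation~\ref{obs: short dist} and Dudley/Borell--TIS to get a uniform Gaussian tail bound $\Pro(M_0 > t)\le e^{-ct^2}$ for large $t$, with constants depending only on $\delta,m_\delta,m_0$; (3) apply the Gaussian correlation inequality (Lemma~\ref{lem: KS}) to the joint Gaussian structure to factor the probability into a product over $j$ of $\Pro(|h(j)|\le \ell/2)\cdot\Pro(M_j\le \ell/2)$ — or, if one prefers to avoid invoking correlation for the $M_j$'s, condition on $(h(j))_j$ and use Khatri--Sidak for the lattice values only, bounding the conditional probability of $\{M_j\le \ell/2 \ \forall j\}$ crudely from below after a further union bound that still yields an $N$-th power; (4) combine with stationarity so each factor equals $\Pro(|h(0)|\le \ell/2)(1-e^{-c_1\ell^2})$, and finally (5) choose $c$ and $\ell_0$ so that this product is bounded below by $\Pro(c|h(0)|\le \ell)^N$. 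The main obstacle is step (3): handling the lack of independence between $(h(j))_j$ and the increment suprema $M_j$ cleanly. I expect the resolution is either to invoke the full Gaussian correlation inequality on the enlarged (jointly Gaussian after conditioning, or symmetric convex) family, or — more in the spirit of the chaining method referenced in~\cite{Li-Shao} — to iterate the dyadic refinement of each unit interval, at each scale $2^{-m}$ picking up a factor $(1-e^{-c 4^m \ell^2})^{2^m N}$ whose infinite product over $m$ converges and stays bounded below for $\ell>\ell_0$, thereby never needing independence at all.
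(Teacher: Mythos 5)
Your last sentence is the paper's actual proof: it uses exactly the dyadic chaining you describe there, not the unit-interval decomposition of your main steps. Concretely, the paper writes the target event as containing $\bigcap_{k\ge 0}A_k$, where $A_0$ bounds $|h|$ at the integers by $\ell/\al$ and $A_k$ bounds the increments $|h(j2^{-k})-h((j-1)2^{-k})|$ by $\ell/(\al k^2)$ with $\al=1+\sum_k k^{-2}$ (so that the thresholds over all scales sum to $\ell$ --- a budget allocation your sketch omits; you cannot spend $\ell/2$ at every scale), applies Khatri--Sidak once to this single jointly Gaussian family of increments, bounds $\var\big(h(2^{-k})-h(0)\big)\lesssim 2^{-dk}$ with $d=\min(\delta,2)$ via Observation~\ref{obs: short dist}, and verifies $\sum_k 2^kN e^{-c\ell^2 2^{dk}/k^4}\le 2Ne^{-\ell^2/c^2}$ for $\ell>\ell_0$. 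Note the per-scale gain is $2^{dk}$, not $4^k$ as you wrote; your exponent is only correct when $\delta\ge 2$, and for small $\delta$ the convergence of the sum is exactly where the hypothesis $m_\delta<\infty$ enters.

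Two remarks on your primary route. First, the version of step (3) that conditions on the lattice values and then applies a union bound over $j$ to the fluctuation events cannot work: it yields a lower bound of the form $1-Ne^{-c\ell^2}$, which is negative for fixed $\ell>\ell_0$ and large $N$, whereas the lemma requires $\ell_0$ independent of $N$. The version invoking the full Gaussian correlation inequality on the symmetric convex events $\{M_j\le \ell/2\}$ does work (the paper itself notes that Lemma~\ref{lem: KS} is a special case of Royen's inequality, and a finite-dimensional approximation handles the suprema), and together with Borell--TIS it gives a legitimate alternative proof that trades the multi-scale bookkeeping for a stronger correlation input. Second, your justification of the per-factor comparison in step (5) is not right as stated: all three probabilities tend to $1$, so ``comparable up to a constant factor'' is vacuous. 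What you must compare are the complements, i.e.\ show $2e^{-c_2\ell^2}+e^{-c_1\ell^2}\le \Pro\big(|h(0)|>\ell/c\big)$, which holds for $c$ large and $\ell>\ell_0$ by Lemma~\ref{lem: tail}(a); this is the same kind of calibration of $c$ against the various decay rates that occupies the second half of the paper's proof.
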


We remark that the dependence of the constant $c$ on properties of the spectral measure is essential, as can be seen by 
scaling arguments. This is a major difference between continuous-time (i.e. Lemma \ref{lem: prod}) and discrete-time (i.e. Lemma \ref{lem: KS}).

\begin{proof}

Assume, without loss of generality, that $m_0=\var h(0) =1$. Fix a number $c\geq 1$ (to be specified later) and let $\ell_0\in \N$ be large enough such that, in particular,
\begin{equation}\label{lnot}
e^{-\left(\frac{\ell_0}{c}\right)^2}\leq \tfrac{1}{12}.
\end{equation}
By part (a) of Lemma \ref{lem: tail}, for any $\ell\ge \ell_0$ we have:
\begin{equation}\label{eq: single}
\Pro\left( |h(0)|\le \frac {\ell}{c} \right)^N \le (1- 2e^{-\frac{\ell^2}{c^2}})^N \le \exp(- 2N e^{- \frac{\ell^2}{c^2} }),
\end{equation}
where in the right inequality we used the fact that $\log(1-x)\leq -x$ for all $0<x<1$.
Let $\al=1+\sum_{k=1}^\infty \frac 1 {k^2}$. For $k\in\N$, define the event
\[
A_k ={ \bigcap_{j=1}^{N2^k} } \left\{ \big|h(j 2^{-k})-h ( (j-1)2^{-k}) \big|  \leq\frac {\ell}{\al k^2} \right\},
\]
while $A_0 = \bigcap_{j=0}^N \big\{|h(j)|\le \frac \ell \al \big\}.$
Since every real number equals $n + \sum_{k=1}^\infty \ep_k 2^{-k}$ for some $n\in\Z$ and $\ep_k\in\{0,1\}$, the almost-sure continuity of $h$ implies that,
\[
\bigcap_{k\ge 0} A_k \subset \{|h(t)|\le \ell, \;\; \forall t\in[0,N]\}.
\]
 Therefore,
\begin{align*}
\Pro\Big( |h(t)|\le \ell, \;\; \forall t\in[0,N]\Big)& \ge \Pro\Big( \bigcap_{k\ge 0} A_k \Big) =\lim_{K\to\infty} \Pro\Big(\bigcap_{0\le k\le K} A_k\Big).
\end{align*}
Now, by Observation \ref{obs: short dist} we have $\text{var}\big(h(2^{-k})-h(0)\big) =2(1-r(2^{-k}))\leq \beta 2^{-d k}$, where $d=\min(\delta,2)$ and $\beta=\beta(d,m_d)$ is a given constant.
By Lemma \ref{lem: KS}, stationarity and Lemma \ref{lem: tail}(a), we have:
\begin{align*}
\Pro(\cap_{0\le k\le K} A_k) &\ge \Pro\left(|h(0)| \leq \frac{\ell}{\al} \right)^{N} \prod_{1\le k\le K} \Pro\left(|h(2^{-k})-h(0)|  \leq \frac{\ell}{\al k^2} \right)^{N2^k}
\\ & \ge {\left(1- 2e^{-\frac{\ell^2}{2\al^2}} \right)^{N}}\prod_{1\le k\le K} \left(1- 2e^{-\frac{\ell^2 2^{d k}}{ 2\beta \al^2 k^4}} \right)^{N2^k}.
\end{align*}
Since $-2x\leq \log(1-x)$ for all $0<x<\frac{1}{2}$, we find that if $\ell_0$ is large enough then all $\ell \geq \ell_0$ satisfy
\[
 \Pro\Big( |h(t)|\le \ell, \;\; \forall t\in[0,N]\Big)\ge \exp\left(-4N  \left( e^{-\frac {\ell^2}{2\al^2}} +\sum_{k=1}^K 2^{k} e^{-\frac{\ell^2 2^{d k}}
{2\beta \al^2 k^4}} \right) \right).
\]
In light of~\eqref{eq: single}, we need only check that there is a choice of $\ell_0$ and $c$ so that for any $\ell>\ell_0$ one has:
\begin{equation*}
\exp\left(-{4}N  \left( e^{-\frac {\ell^2}{2\al^2}} +\sum_{k=1}^\infty 2^{k} e^{-\frac{ 2^{d k}}
{2\beta \al^2 k^4} \ell^2} \right) \right) {\ge} \exp(-{2}Ne^{-\frac {\ell^2}{c^2}} ),
\end{equation*}
which is equivalent to
\[
\sum_{k=1}^\infty 2^{k}  e^{-\frac{2^{d k}}{2\beta \al^2 k^4} \ell^2 } \le \frac 1{2}e^{-\left(\frac{\ell}{c}\right)^2 } - e^{-\frac 1 2\left(\frac {\ell}{\al}\right)^2}.
\]
{If $c^2\ge 4 \al^2$, it is enough to show that}
\[
{\sum_{k=1}^\infty 2^{k}  e^{-\frac{2^{d k}}{2\beta \al^2 k^4} \ell^2 }  \le \frac 1{2}e^{-\left(\frac{\ell}{c}\right)^2 } - e^{-  2\left(\frac {\ell}{c}\right)^2}},
\]
which by applying (\ref{lnot}) reduces to
\[
\sum_{k=1}^\infty 2^{k}  e^{-\frac{2^{d k}}{2\beta \al^2 k^4} \ell^2 }  \le \frac 1{3}e^{-\frac{\ell^2}{c^2} } .
\]
Setting $\ds \frac 2{c^2} = \min \left(\min_{k\in\N} \frac{2^{d k}}{2\beta \al^2 k^5}, \frac 1{2\al^2}\right)$ and using the inequality $\sum_{k=1}^\infty q^k \le 2q$ for $q<\frac 1 2$, we have:
\[
\sum_{k=1}^\infty 2^{k}  e^{-\frac{2^{d k}}{2\beta \al^2 k^4} \ell^2 }  \le \sum_{k=1}^\infty (2e^{-2\ell^2/c^2})^k \le 4 e^{-2\ell^2/c^2} \le \frac 1{3}e^{-\frac{\ell^2}{c^2} },
\]
where in each of the last two inequalities we used the estimate (\ref{lnot}). This completes the proof.
\end{proof}

\subsection{Two famous Gaussian inequalities}
We end with two famous Gaussian bounds. The first is a comparison between ball probabilities due to Anderson \cite{ineq}*{Ch. 2.3}.

\begin{lem}[Anderson]\label{lem: anderson}
Let $X, Y$ be two independent, centered Gaussian processes on $I$. Then for any $\ell>0$,
\[
\Pro\Big(\sup_{I} |X \oplus Y| \le \ell\Big) \le \Pro\Big(\sup_{I} |X|\le \ell\Big).
\]
\end{lem}

The second lemma is due independently to Borell and Tsirelson-Ibragimov-Sudakov, see \cite{AT}*{Thm. 2.1.1}.
\begin{lem}[Borell-TIS]\label{lem: BTIS}
Let $X$ be a centered Gaussian process on $I$ which is almost surely bounded. Then for all $u>0$ we have:
\[
\Pro\left(\sup_I X  - \E \sup_I X >u \right) \le \exp\left(-\frac{u^2}{2\si_I}\right),
\]
where $\si_I = \sup_{t\in I} \var X(t)$.
\end{lem}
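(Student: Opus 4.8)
The plan is to prove the Borell--TIS concentration inequality, which states that for an almost surely bounded centered Gaussian process $X$ on $I$, the supremum $M := \sup_I X$ concentrates around its mean with sub-Gaussian tails governed by $\si_I = \sup_{t\in I}\var X(t)$. Since this is a classical result quoted from \cite{AT}, I would give the standard argument rather than reproving every analytic subtlety.

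\textbf{Reduction to a finite-dimensional statement.} First I would reduce to the case where $I$ is a finite set. By separability (which follows from a.s. boundedness after passing to a separable modification), one can choose a countable dense subset $\{t_1,t_2,\dots\}\subset I$ such that $\sup_I X = \sup_n X(t_n)$ almost surely; then $M_n := \max\{X(t_1),\dots,X(t_n)\}\uparrow M$ pointwise. If the inequality is established for each $M_n$ with the uniform bound $\sup_i \var X(t_i)\le \si_I$, then monotone convergence for $\E M_n\to\E M$ (finite by a.s. boundedness and Fernique's theorem) together with Fatou gives the inequality for $M$. So it suffices to treat $X=(X_1,\dots,X_n)$ a centered Gaussian vector in $\R^n$ and $F(x)=\max_i x_i$.

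\textbf{The finite-dimensional core.} Write $X = A g$ where $g\sim\calN(0,I_n)$ is standard Gaussian and $A$ is a square root of the covariance; then $M = F(Ag)$ with $F(x)=\max_i x_i$. The function $x\mapsto F(Ax)$ is Lipschitz with constant $L = \sup_{|u|=1}|A^T e_{i^*}|$-type bound; more carefully, $|F(Ax)-F(Ay)|\le \max_i |(A(x-y))_i| \le \max_i \|A^T e_i\| \cdot |x-y| = \sqrt{\si_I}\,|x-y|$, since $\|A^T e_i\|^2 = (AA^T)_{ii} = \var X_i \le \si_I$. Now invoke the Gaussian concentration (Lipschitz) inequality: for any $1$-Lipschitz $\phi:\R^n\to\R$ and $g\sim\calN(0,I_n)$, $\Pro(\phi(g)-\E\phi(g)>u)\le e^{-u^2/2}$. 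Applying it to $\phi = F(A\cdot)/\sqrt{\si_I}$ yields $\Pro(M - \E M > u) \le \exp(-u^2/(2\si_I))$, which is exactly the claimed bound.

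\textbf{Main obstacle and how to handle it.} The genuine analytic content is the Gaussian concentration inequality for Lipschitz functions; I would either cite it directly (it is standard, e.g. via the Gaussian isoperimetric inequality or via Ehrhard's inequality) or sketch the semigroup/interpolation proof: set $u(x)=\phi(x)-\E\phi$, consider $P_s u$ under the Ornstein--Uhlenbeck semigroup, and bound $\frac{d}{ds}\E[e^{\lambda P_s u}]$ using the commutation $\nabla P_s = e^{-s}P_s\nabla$ and $\|\nabla\phi\|_\infty\le 1$, obtaining $\E[e^{\lambda(\phi-\E\phi)}]\le e^{\lambda^2/2}$ and then optimizing over $\lambda$ in Markov's inequality. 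The only other point requiring care is the separability/measurability of $\sup_I X$ and the finiteness of $\E\sup_I X$, both of which are part of the standard setup for a.s.\ bounded Gaussian processes and are covered by the references; I would state these as known and proceed. Since the paper merely records this lemma for later use with a citation to \cite[Thm. 2.1.1]{AT}, a brief indication of this argument — or simply the citation — suffices.
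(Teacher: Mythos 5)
Your sketch is the standard proof of Borell--TIS (reduction to finite index sets by separability, then Gaussian Lipschitz concentration applied to $x\mapsto\max_i(Ax)_i$ with Lipschitz constant $\sqrt{\si_I}$), and it is correct, including the identification $\|A^Te_i\|^2=(AA^T)_{ii}=\var X_i\le\si_I$ that produces the right constant in the exponent. The paper does not prove this lemma at all --- it simply cites \cite[Thm.\ 2.1.1]{AT} --- so your citation-plus-sketch is entirely in line with what the paper does.
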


\section{Lower bounds}\label{sec: low}

{The main result of this section is the following general inequality. }

\begin{thm}[general lower bound]\label{thm: balance low}
Let $f$ be a GSP with spectral measure $\rho$ obeying \eqref{eq: basic cond}, and let $\delta>0$ be such that
$m_\delta=m_\delta(\rho)<\infty$. Then there exist
\[
\beta=
\begin{cases}
\beta(\delta,m_\delta), & T=\R \\
2\sqrt{2}, & T=\Z
\end{cases}
\quad \text{and} \quad
\ell_0 = \begin{cases}
\ell_0(\delta,m_\delta), & T=\R \\
0, & T=\Z
\end{cases}
\]
such that, for all $\ell>\ell_0$ and $N>0$, we have:
\[
P_f(N) \ge \Pro\big( \si_N Z > \ell\big) \cdot \Pro\big( \beta|Z| <  \ell\big)^{ N}.
\]
\end{thm}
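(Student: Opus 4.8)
The plan is to build an explicit "candidate" decomposition of $f$ that isolates a slowly-varying, positive piece carrying the low-frequency mass $\si_N^2$, and to force the remainder to be uniformly small on $[0,N]$ at a cost that is exponential in $N$. Concretely, write the spectral measure as $\rho = \rho|_{[-1/N,1/N]} + \rho|_{\{|\lm|>1/N\}}$, and use the construction of Lemma~\ref{lem: basis} to realize $f = g + h$ as a sum of two independent GSPs whose spectral measures are the two pieces (extending by zero to $T^*$). The process $g$ has total spectral mass $\si_N^2$ and spectrum concentrated in $[-1/N,1/N]$, so $g(0) \sim \calN(0,\si_N^2)$ and $g$ varies slowly: on the event $\{g(0) > \ell\}$ one expects $g(t) > 0$ throughout $[0,N]$, since the spectral localization controls increments of $g$ over a window of length $N$. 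The complementary process $h$ has unit-order mass but its spectrum avoids a neighborhood of $0$; the idea is that asking $h$ to stay close to $0$ on $[0,N]$ — so that $f = g + h$ inherits positivity from $g$ — costs a probability that is exponentially small in $N$, and this is exactly where the factor $\Pro(\beta|Z| < \ell)^N$ comes from.

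The key steps, in order, would be: (1) Decompose $f = g+h$ with $g,h$ independent as above; by independence, $P_f(N) \ge \Pro\big(g(t) > 0 \text{ and } |h(t)| < g(t)\ \forall t\big)$, and it suffices to lower bound $\Pro(g(0) > \ell) \cdot \Pro(g(t) > 0\ \forall t \mid g(0) > \ell) \cdot \Pro(\sup_{[0,N]} |h| < \ell)$ after replacing the coupled condition by the cruder $\{g \equiv \text{positive and} \ge \ell \text{ somewhere}\}\cap\{\sup|h|<\ell\}$ — more carefully, by the sub-event $\{g(0)>\ell\}\cap\{g \text{ doesn't drop by } \ell \text{ on } [0,N]\}\cap\{\sup_{[0,N]}|h|<\ell\}$. (2) Show that spectral localization of $g$ in $[-1/N,1/N]$ forces, with probability $1$, that $g$ cannot oscillate much on $[0,N]$: estimate $\E|g(t)-g(0)|^2 = 2\int_{[-1/N,1/N]}(1-\cos\lm t)\,d\rho \le \int \lm^2 t^2\,d\rho \le \si_N^2$ for $t\le N$, and combine with a maximal/chaining bound (or, over $\Z$, a union bound over $N$ points) to control $\sup_{[0,N]}|g(t)-g(0)|$. (3) Bound $\Pro(\sup_{[0,N]}|h|<\ell)$ from below by $\Pro(\beta|Z|<\ell)^N$: over $\Z$ this is a slicing of $\{|h(1)|<\ell,\dots,|h(N)|<\ell\}$ using Gaussian correlation-type inequalities (Šidák / Gaussian correlation inequality) together with the variance bound $\E h(n)^2 \le 1$, which gives $\beta = 2\sqrt 2$ after accounting for conditioning; over $\R$ one discretizes $[0,N]$ into unit intervals and pays an extra constant (absorbed into $\beta(\delta,m_\delta)$ and the threshold $\ell_0(\delta,m_\delta)$) to pass from the values at integers to the supremum, using the moment hypothesis $m_\delta<\infty$ to control the modulus of continuity of $h$.

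I expect the main obstacle to be step (3) in the continuous case: converting the sup over a continuum to a product over $N$ unit blocks while keeping the per-block factor of the clean form $\Pro(\beta|Z|<\ell)$ with a universal-shaped constant. This requires a Gaussian-process small-deviation estimate for $h$ on an interval of length $1$ — bounding $\Pro(\sup_{[n,n+1]}|h|<\ell)$ below by $\Pro(|h(n)|<\ell/2)\cdot\Pro(\sup_{[n,n+1]}|h(t)-h(n)|<\ell/2\mid h(n))$ — and then invoking a correlation inequality (Pitt/Gaussian correlation inequality) to chain the $N$ block-events into a product; the finite moment $m_\delta$ enters precisely to guarantee the oscillation term is controlled uniformly, which is why $\beta$ and $\ell_0$ depend on $(\delta,m_\delta)$ only over $\R$. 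A secondary technical point is making rigorous the replacement of the coupled event $\{|h|<g\}$ by $\{\sup|h|<\ell\}\cap\{g>0 \text{ on }[0,N]\}$ while retaining independence of the two factors — this is handled by noting $\{g(0)>\ell\}\cap\{\sup_{[0,N]}|g-g(0)|\le \ell\}$... wait, that only gives $g\ge 0$; one should instead use $\{g(0)>2\ell\}\cap\{\sup|g-g(0)|<\ell\}$ so that $g>\ell>|h|$, which merely rescales $\ell$ and is absorbed into the stated form $\Pro(\si_N Z>\ell)$ by adjusting constants (and explains why the threshold $\ell_0$ is needed).
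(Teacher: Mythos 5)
Your decomposition is genuinely different from the paper's, and the difference is exactly where your argument has a gap. The paper does not split $\rho$ into $\rho|_{[-1/N,1/N]}$ plus the rest; instead it uses the rank-one Hilbert decomposition (Claim~\ref{clm: one}) with the single unit vector $\p_N=\tfrac{1}{\sqrt2\si_N}\ind_{[-1/N,1/N]\cap\,\sprt\rho}$, writing $f(t)\overset{d}{=}\zeta\,\psi_N(t)\oplus R(t)$ where $\psi_N(t)=\int e^{-i\lm t}\p_N\,d\rho$ is a \emph{deterministic} function. The point is that $\psi_N(t)=\tfrac{1}{\sqrt2\si_N}\int_0^{1/N}\cos(\lm t)\,d\rho\ge\tfrac{1}{2\sqrt2}\si_N$ for all $t\in[0,N]$, so the event that the low-frequency piece exceeds $\ell$ on the whole interval collapses to the one-dimensional event $\{\si_N\zeta\ge2\sqrt2\,\ell\}$ — no oscillation control, no dependence issues, and the constant $2\sqrt2$ falls out. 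The remainder is then handled exactly as you propose: Anderson's inequality to replace $R$ by $f$, then Khatri--Sidak over $\Z$ and the chaining Lemma~\ref{lem: prod} over $\R$; your step (3) is essentially the paper's Lemma~\ref{lem: prod} and is fine in spirit.

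The gap in your version is step (2). With $g$ the GSP with spectral measure $\rho|_{[-1/N,1/N]}$, you need a lower bound of the form $\Pro\big(g(0)>2\ell,\ \sup_{[0,N]}|g-g(0)|<\ell\big)\gtrsim\Pro(\si_N Z>\ell)$. Your increment estimate gives $\E|g(t)-g(0)|^2\lesssim(t/N)^2\si_N^2\le\si_N^2$ for $t\le N$, i.e.\ the increments are of the \emph{same order} as $g(0)$ itself, so the containment event is not automatic; worse, $\{g(0)>2\ell\}$ and $\{\sup|g-g(0)|<\ell\}$ are dependent Gaussian events of opposite type (a tail event and a ball event), and neither Khatri--Sidak nor the Gaussian correlation inequality gives a product lower bound here because the tail event is not a symmetric convex set. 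Making this work requires a separate "atom-like positivity" lemma (one can do it, e.g.\ by further decomposing $g$ along $g(0)$ and its orthogonal complement, but that is essentially re-deriving the rank-one decomposition the paper uses). As written, "one expects $g(t)>0$ throughout" and "combine with a maximal/chaining bound" do not close this step, and this is the one place your proof is incomplete.
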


Theorem \ref{thm: balance low} gives a recipe for estimating $P_f(N)$ from below: given $N$ and $\rho$, one should choose a level $\ell=\ell(N,\rho)$ so that
the factors in the theorem's estimate
are of the same order. This recipe is used to derive Propositions \ref{cor: low exp} and \ref{cor: low}. We start by proving Theorem \ref{thm: balance low}.

\subsection
{General lower bound: proof of Theorem \ref{thm: balance low} }

\begin{proof}
We use the Hilbert decomposition discussed in Subsection \ref{fourdec}. Fix $N>0$ and define
\[
\p_N := \frac{1}{\sqrt{2}\sigma_N}1\!\!1_{[-\frac{1}{N},\frac{1}{N}]\cap \textrm{sprt}\rho },
\]
where, as in the introduction, $\sigma^2_N=\rho([0,1/N])$.
Then, $\p_N\in \mathcal{L}^2_\rho$ and $\|\p_N\|_{\mathcal{L}^2_\rho}=1$.
Write $\psi_N(t) =\int_{T^*} e^{-i\lm t}\p(\lm) d\rho(\lm)$.
Claim \ref{clm: one} implies that
\[
f(t)\overset d=\zeta \psi_N (t)\oplus R(t)
\]
where $\zeta\sim N(0,1)$ and $R$ is a centered Gaussian (not necessarily stationary) process.
Thus, for any $\ell>0$ we have:
\begin{equation}\label{eq: plug low}
P_f(N)\geq \Pro\Big(\zeta \psi_N(t)\geq \ell, \ \forall t\in [0,N]\Big)\cdot \mathbb{P}\Big(|R(t)|\leq \ell,  \ \forall t\in [0,N]\Big).
\end{equation}

To estimate the first term, we note that for $t\in[0,N]$ the function $\psi_N$ satisfies
\[
\psi_N(t)=\frac{1}{\sqrt 2 \sigma_N}\int_{0}^{\frac{1}{N}}\cos (t\lm) d\rho(\lm)
\geq \frac{1}{\sqrt{2} \sigma_N}\cos\left(\frac{t}{N}\right) \sigma^2_N\geq \frac{1}{2\sqrt{2}}\sigma_N.
\]
Therefore,
\[
\mathbb{P}(\zeta \psi_N(t)\geq \ell, \forall t\in [0,N])\geq \mathbb{P}\Big(\sigma_N\zeta \geq 2\sqrt{2} \ell\Big).
\]

For the second term, we use Lemma \ref{lem: anderson} to get:
\[
\mathbb{P}(|R(t)|\leq \ell, \ t\in [0,N])\geq \mathbb{P}(|f(t)|\leq \ell, \ t\in [0,N]).
\]
We now apply ball estimates -- Lemma \ref{lem: KS} over $\Z$, and Lemma \ref{lem: prod} over $\R$ --  to obtain:
\[
\mathbb{P}\Big(|f(t)|\leq \ell, \ t\in [0,N]\Big)\geq \mathbb{P}\Big(\beta|f(0)|\leq \ell\Big)^N,
\]
where over $\Z$ the above is valid for all $\ell>0$ and with $\beta=1$, while over $\R$, as $\var f(0)=1$, it is valid with $\ell>\ell_0(m_\delta, \delta)$ and a certain $\beta=\beta(\delta,m_\delta)$.
By plugging our estimates back into \eqref{eq: plug low} and substituting $\tilde{\ell}=2\sqrt{2}\ell$, the result follows.
\end{proof}

\subsection{Explicit lower bounds: proof of Proposition \ref{cor: low exp} }\label{sec: explicit low1}

Let $\ell_0=0$ and $\beta=1$ over $\Z$, while $\ell_0=\ell_0(m_\delta, \delta)$ and $\beta=\beta(\delta,m_\delta)$ over $\R$ as are given in Theorem~\ref{thm: balance low}. Applying Lemma \ref{lem: tail}(a) to Theorem \ref{thm: balance low} gives the following estimate for all $\ell>\max\{\ell_0, 2\si_N\}$ and $N>0$:
\begin{equation}\label{eq: low balance}
 P_f(N)\geq \exp\left(- {\ell^2}/{ \si_N^2}\right)\cdot \mathbb{P}(\beta|Z|\leq \ell)^N.
\end{equation}
By our premise, there are some $b,\g>0$ such that $\si_N^2 \ge b N^{-\g}$ along a subsequence of $N$. Let $N$ be a member of that subsequence.
We will choose the level $\ell=\ell(N)$ and estimate the terms in \eqref{eq: low balance} in each of three cases.

\medskip
{\bf Case 1: spectrum exploding at $0$ ($\g<1$).} Put $\ell=\ell(N)=\beta\sqrt{2\log N}$, then there exists $N_0$ such that for $N>N_0$ we have $\ell(N)>\ell_0$. This yields for the first term
\[
\exp\left(- { \ell^2}/{ \si_N^2}\right)\geq e^{-CN^{\gamma}\log N},
\]
while by Lemma \ref{lem: tail}(a) we have for the second term
\[
\Pro\Big(\beta|Z|\leq \ell\Big)^N\geq \Big(1-2e^{-\log N}\Big)^N= \left(1-\tfrac{2}{ N}\right)^N\ge e^{-2}.
\]

\medskip
{\bf Case 2: spectrum bounded near $0$ ($\g=1$).}
Fix an arbitrary $\ell>\max\{\ell_0,\beta\}$. Then
\[
\exp\left(- { \ell^2}/{ \si_N^2}\right)\geq e^{- \ell^2 N/b},
\]
while
\[
\mathbb{P}(\beta|Z|\leq \ell)^N\geq \mathbb{P}(|Z|\leq 1)^N=e^{-cN}.
\]

\medskip
{\bf Case 3: spectrum vanishing at $0$ ($\gamma>1$) over $\Z$.} Put $\ell^2=8 N\si^2_{N} \ge 8bN^{1-\g}$. The first term is
\[
\exp\left(- {\ell^2}/{ \si_N^2}\right)= e^{- 8 N}.
\]
Using Lemma \ref{lem: tail}(b) we bound the second term:
\[
\Pro\Big(2\sqrt{2}|Z|\leq \ell\Big)^N\geq\Pro\Big(|Z|\leq \sqrt{b} N^{(1-\g)/2} \Big)^N \ge
\Big(\tfrac{ \sqrt b }{ 4 } N^{(1-\g)/2} \Big)^N
\geq e^{-C N\log N}.
\]
In all cases, the estimate stated in Proposition \ref{cor: low exp} follows.

\subsection{Vanishing spectrum over \texorpdfstring{$\Z$}{Z}: proof of Proposition \ref{cor: low} }\label{sec: explicit low2}

Over $T=\Z$, assume that $N\si_N^2 <1$.
We imitate the proof of the case $\g>1$ of Proposition \ref{cor: low exp},
and set $\ell^2= 8 N \si_N^2$. We estimate both parts of \eqref{eq: low balance}.
As above, the first term is estimated by
$ e^{- 8 N}. $
By Lemma~\ref{lem: tail}(b)
the second term satisfies
 \[
\Pro\Big(2\sqrt{2}|Z|\leq \ell\Big)^N =\Pro\Big(|Z|\leq \si_N\sqrt{N} \Big)^N \ge
e^{ C N \left( \log(N\si_N^2)-1\right) }.
\]
The estimate follows.

\section{Upper bounds }\label{sec: up}

The main result of this section is a general upper bound on the persistence probability.

Assume that $f$ is a GSP whose spectral measure $\rho$ satisfies $\rho_{ac}\ne 0$.
This implies that there exist $\nu>0$ and $E\subseteq \R$ s.t. $E$ is a bounded set of positive measure on which $d\rho_{ac} \ge \nu\, dx$. We let $q$ be such that $\frac{1}{q}E\subseteq [-\pi,\pi]$.
Further, assume that $\g\ge 0$ is such that
$m_{-\g}(\rho)<\infty$ and let $k\in\N\cup \{0\}$ and $0\leq s<2$ be such that $\g=2k+s$. Put $r=\max\{k, s/2\}$.
We have the following.

\begin{thm}[general upper bound]\label{thm: balance up}
There exist universal positive constants $c_0$, $c_1$, and a constant $c(s)$ depending only on $s$, such that the following holds; let $f$ be a GSP whose spectral measure $\rho$ obeys \eqref{eq: basic cond} and has a nontrivial absolutely continuous component. Let $E,\nu,q,\g, k, s, r $ be as described above. Set
\begin{equation*}
\alpha =c_0 |E|, \quad
\beta =
\begin{cases}
(c_1 k)^{-k}\sqrt{ \frac{\nu|E|}{m_{-2k}} }, & k> 0 \\
c(s)\sqrt{ \frac{\nu|E|}{m_{-s} }}, & k=0
\end{cases}
\quad \text{and}  \quad
\theta = \begin{cases}
 \sqrt{\frac{m_{-2k+2}{(\rho)}}{4 m_{-2k}{(\rho)}}}, & k>0 \\
 1, & k=0.
 \end{cases}
\end{equation*}
Then there exists $N_0(E)>0$ such that for every $N>\max\{N_0,2k\}$ and
\[\ell_0(N)=
2N^{-r} \max\{\sqrt{\log (\theta N)},1\}
\]
the following holds for all $\ell > \ell_0(N)$:
\[
P_f(N) \le 2\, \Pro\big( N^{-r} Z >  \ell\big) + {2qN}\, \Pro\big( \beta |Z| <  \ell\big)^{\al N}.
\]

\end{thm}

We make two additional remarks which we establish at the end of the proof of Theorem~\ref{thm: balance up}.
\begin{rmk}\label{rmk: N0}
Over $\R$, if $E$ contains an interval $J$, then one can take $N_0 = \frac{2\pi }{|J|}$.
\end{rmk}

\begin{rmk}\label{rmk: thm relax}
In case $\g<2$, one may replace the condition $m_{-\g}<\infty$ with $\si_N^2 \le bN^{-\g}$ for all $N>0$ and some $b>0$.
In this case $\beta = c(s) \sqrt{\frac{\nu|E|}{2b}}$, while other constants remain unchanged.
\end{rmk}

Theorem \ref{thm: balance up} gives a recipe for estimating $P_f(N)$: given $N$ and $\rho$, one should choose a level $\ell=\ell(N,\rho)$ so that
the factors in the theorem's estimate are of the same order.
This recipe is used to derive Propositions \ref{cor: up}, \ref{cor: up inf} and \ref{cor: small}.

We will use the following reduction.

\begin{clm}\label{clm: E in pi}
It is enough to prove Theorem \ref{thm: balance up} assuming that condition \eqref{eq: AC cond} holds for $E \subseteq [-\pi,\pi]$.
\end{clm}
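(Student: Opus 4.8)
The statement to be proven is Claim~\ref{clm: E in pi}: that it suffices to prove Theorem~\ref{thm: balance up} under the extra assumption that the set $E$ in condition~\eqref{eq: AC cond} lies inside $[-\pi,\pi]$. The natural approach is a rescaling argument. Suppose $f$ is a GSP over $T$ with spectral measure $\rho$ satisfying \eqref{eq: basic cond} and \eqref{eq: AC cond}, where $d\rho = \nu\,\ind_E\,d\lm + d\mu$ and $E$ is merely a bounded measurable set (over $\R$; over $\Z$ we automatically have $T^*=[-\pi,\pi]$ so there is nothing to prove, and I would dispatch that case in one sentence). First I would pick $q>0$ so that $\tfrac1q E \subseteq [-\pi,\pi]$ — this is exactly the $q$ introduced just before Theorem~\ref{thm: balance up} — and consider the time-scaled process $\tilde f(t) := f(t/q)$. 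Its covariance is $\tilde r(t) = r(t/q) = \int e^{-i\lm t/q}\,d\rho(\lm)$, so by the change of variables $\eta = \lm/q$ the spectral measure of $\tilde f$ is the pushforward $\tilde\rho$ of $\rho$ under $\lm\mapsto\lm/q$. Then $\tilde\rho$ has absolutely continuous component $q\nu\,\ind_{E/q}\,d\eta$ with $E/q\subseteq[-\pi,\pi]$, so $\tilde f$ falls under the restricted version of the theorem.

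The second step is to track how all the quantities appearing in Theorem~\ref{thm: balance up} transform under this rescaling, and to check that the conclusion for $\tilde f$ translates back into the claimed conclusion for $f$. Persistence scales cleanly: over $\R$,
\[
P_{\tilde f}(N) = \Pro\big(\tilde f(t)>0,\ \forall t\in(0,N]\big) = \Pro\big(f(s)>0,\ \forall s\in(0,N/q]\big) = P_f(N/q),
\]
so a bound of the stated form for $\tilde f$ at scale $qN$ gives the bound for $f$ at scale $N$ (one should be slightly careful replacing $N$ by $qN$ and absorbing the factor $q$ into the constants $c_0$, $N_0$, etc.). The moments transform by $m_{-\g}(\tilde\rho) = q^{\g} m_{-\g}(\rho)$ and similarly for the positive moment, the new "$|E|$" is $|E|/q$, and the new "$\nu$" is $q\nu$, so the product $\nu|E|$ that enters $\beta$ is invariant, while the various powers of $N$ and logarithms in $\ell_0(N)$ and in the final estimate are only affected by constants depending on $q$ — which is itself determined by $E$, consistent with the theorem's assertion that the constants depend on $E$. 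I would verify that $\al = c_0|E|$, $\beta$, $r=\max\{k,s/2\}$ and the shape of the bound $\Pro(N^{-r}Z>\ell) + 2qN\,\Pro(\beta|Z|<\ell)^{\al N}$ are all reproduced (with adjusted universal constants) after undoing the scaling; in particular the spurious factor $q$ in the term $2qN$ is exactly what one expects from covering an interval of length $N/q$ in the original time by $\asymp N$ unit steps after rescaling.

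Since there is essentially no genuine obstacle here, the "hard part" is merely bookkeeping: making sure the dependence of the final constants on $q$ (hence on $E$) is harmless — which it is, because the theorem already allows all constants except the universal $c_0,c_1,c(s)$ to depend on $E$ — and handling the edge case where $\mu$ itself may have an absolutely continuous piece overlapping $E$, which does not matter since \eqref{eq: AC cond} only asserts a lower bound $\nu\ind_E$ on $d\rho$ with the remainder $\mu$ an arbitrary nonnegative measure. I would therefore present the proof as: (i) reduce to $T=\R$; (ii) rescale time by $q$; (iii) note $\tilde\rho$ satisfies \eqref{eq: AC cond} with $\tilde E = E/q\subseteq[-\pi,\pi]$; (iv) apply the restricted theorem to $\tilde f$ and rewrite the conclusion in terms of $f$, absorbing $q$-dependent factors into the $E$-dependent constants. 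This is short and self-contained given everything stated earlier.
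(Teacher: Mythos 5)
Your proposal is correct and follows essentially the same route as the paper: reduce to $T=\R$, rescale time via $\tilde f(x)=f(x/q)$ with $\tfrac1q E\subseteq[-\pi,\pi]$, track how $m_\g$, $\nu$, $|E|$, $\al$, $\beta$, $\ell_0$ transform (in particular $\nu|E|$ is invariant and $\tilde\beta=q^{-r}\beta$), and apply the restricted theorem to $\tilde f$ at scale $qN$ with level $q^{-r}\ell$. The bookkeeping you outline matches the paper's computation.
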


\begin{proof}
Over $\Z$ the claim is trivial. Let $T=\R$.
Suppose that Theorem \ref{thm: balance up} holds when $E\subseteq[-\pi,\pi]$. In the general case, let $\rho= \nu\ind_E + \mu$ for some ${\nu}>0$ and measurable $E\subset \R$. Set the constants $\alpha$ and $\beta$, as well as the function $\ell_0(N)$, as defined in Theorem \ref{thm: balance up}.
Let $q>1$ be such that $\tfrac 1 q E\subseteq [-\pi,\pi]$ {and let} $\tilde{f}$ be the GSP defined by $\tilde{f}(x)=f(\tfrac{x}{q})$. Then $\tilde{f}$ has spectral measure $\tilde\rho = q \nu \ind_{\tfrac 1 q E} + \tilde \mu$
whose moments satisfy $m_{\g}(\tilde \rho)={q^{-\g}} m_{\g}(\rho)$ for any {$\g\in\R$}.
Thus the corresponding {values of}
$\tilde{\al}$, $\tilde{\beta}$ and
 $\tilde{\ell_0}(N)$ satisfy: $\tilde{\alpha}=q^{-1}\al$,
$\tilde{\beta}=q^{-r}\beta$ and
$\tilde{\ell_0}(qN) = q^{-r}\ell_0(N)$, where as above, $r=\max\{k,s/2\}$.  Set $N_0(\tfrac 1 q E)$ as defined in Theorem \ref{thm: balance up} and let $N_0(E)=N_0(\tfrac 1 q E)$. Note that since $q>1$, if $N>\max\{N_0(\tfrac 1 q E), k\}$ then $qN$ satisfies the same condition. So, given $N>\max\{N_0(\tfrac 1 q E), k\}$ and
$\ell>\ell_0(N)$
we may apply Theorem \ref{thm: balance up} for $\tilde{f}$, with $\tilde{N}=qN$ and $\tilde{\ell}=q^{-r}\ell > \tilde{\ell_0}(\tilde{N})$, to get:
\begin{align*}
P_f(N) &= \Pro\Big(f(t)>0 \ \forall t\in [0,N] \Big)
= \Pro\Big(\tilde{f}(t)>0 \ \forall t\in [0,qN]\Big)
\\ & \le \Pro \left( (qN)^{-r} Z > q^{-r}\ell \right) +{2qN}\, \Pro\left(q^{-r}\beta |Z| \le q^{-r}\ell \right)^{\tfrac \alpha  q\cdot qN}
\\ & = \Pro \left( N^{-r} Z > \ell
 \right) + {2qN}\,\Pro\left(\beta|Z| \le \ell \right)^{\alpha N}.
\end{align*}
\end{proof}

\subsection{General upper bound: proof of Theorem \ref{thm: balance up} }\label{sec: vanish up}

We turn to prove Theorem \ref{thm: balance up}.
We will give full details for the case $T=\R$, as the proof for $T=\Z$ is almost identical. Throughout the proof we denote by $C$ universal constants which may change from line to line, and by $C(s)$ constants depending only on $s$, which again, may change from line to line. We divide the proof into several steps.

\medskip
\underline{\textbf{Step I:}} Let $\g>0$ be such that $m_{-\g}<\infty$ and write $\g = 2k+s$, where $k\in \N\cup \{0\}$ and $0\le s<2$.
If $k\neq 0$ then, by applying Observation~\ref{obs: integ} $k$ times, we find that there exists a GSP $F_{k}$ which satisfies $F_{k}^{(k)}\overset{d}{=}f$. We denote the spectral measure of $F_{k}$ by $\mu_{k}$ and note that, by Observation~\ref{obs: rate},
$d\mu_k= \frac{d\rho(\lm)}{\lm^{2k}}$. We therefore have, due to stationarity,
\[
\var(F_k(t)) =m_{-2k}(\rho) \qquad \forall{t\in\R},
\]
and, due to Lemma \ref{lem: sup stat},
\[
\quad \E\Big(\sup_{[0,N]} F_k \Big) \le  C\sqrt{m_{-2k}{(\rho)}\max\{\log \left(\theta_k N\right),1\}},
\]
where $\theta_k :=\sqrt{\frac{m_{-2k+2}{(\rho)}}{4 m_{-2k}{(\rho)}}}$.

If $k=0$, we integrate one time to get the process $F_{\frac{s}{2}}(t) :=
 \int_0^t f(\tau) d\tau$, so that $F_{\frac{s}{2}}'\overset{d}{=}f$. Notice that in this case, $F_{\frac{s}{2}}$ is a Gaussian process, but not necessarily stationary.
From Observation \ref{obs: IBP} we deduce that $\rho([0,\lm]) \le \frac{1}{2}m_{-s} {(\rho)}\lm^{s}$ for all $\lm>0$.
 Since we assume that $m_0=1$, Lemma \ref{lem: average} implies that, for every $N>0$,
\[
\sup_{t\in [0,N]}\var (F_{\frac{s}{2}}(t)) \le C(s) m_{-s}{(\rho)}N^{2-s}.
\]
while Lemma \ref{lem: sup H} implies that, for every $N>1$, we have
\[
\E \Big(\sup_{[0,N]} F_{\frac{s}{2}}(t) \Big)\le C(s) \sqrt{m_{-s}{(\rho)}} N^{1-\tfrac s 2}.
\]
Write $\theta_0=1$ and recall that $r=\max\{k,s/2\}$. For $\ell >2N^{-r} \max\{\sqrt{\log (\theta_k N)},1\}$ set
\[
M(N)= \E \Big(\sup_{[0,N]} F_r\Big)+\sqrt{2\sup_{t\in[0,N]}\var (F_r(t))}N^r \ell.
\]
Our estimates yield
\[
M(N)\le \begin{cases}
 C\sqrt{m_{-2k}}(\max\{\sqrt{\log (\theta_k N)},1\} + N^k \ell) , & k>0 \\
C(s) \sqrt{m_{-s}} N^{1-\tfrac s 2}(1+N^{s/2} \ell)  , &k=0
\end{cases},
\]
or rather, due to the condition on $\ell$,
\begin{equation}\label{eq: bounds on M}
M(N)\le \begin{cases}
  C\sqrt{m_{-2k}} N^k \ell , & k>0 \\
C(s) \sqrt{m_{-s}} N \ell, &k=0.
\end{cases}
\end{equation}

\medskip
\underline{\textbf{Step II:}} Consider the event
\begin{equation*}
G =\Big\{ \sup_{[0,N]} \left|F_r\right| \le M(N) \Big\}. 
\end{equation*}
We will estimate the persistence probability through:
\begin{equation}\label{eq: p12}
P_f(N) \le \Pro(G^c) + \Pro( \{f> 0 \text{ on } [0,N]\} \cap G )=:P_1 + P_2.
\end{equation}

To estimate $P_1$, we apply the Borell-TIS inequality (Lemma \ref{lem: BTIS}). Using also the fact that $F_r$ and $-F_r$ are identically distributed, we get 
\begin{align}\label{eq: Gc}
P_1&= {2} \,\Pro\left( \sup_{[0,N]} F_{r} >
 \E \big(\sup_{[0,N]} F_r\big)+\sqrt{2\sup_{[0,N]}\var (F_r)}N^r \ell
 \right)  \notag
 \\ &\le {2}\, e^{-N^{2r}\ell^2} \le 2\, \Pro\left(N^{-r} |Z| > \ell\right),
\end{align}
where in the last step we use Lemma \ref{lem: tail}(a) and the fact that $\ell N^r>2$.

\medskip

\underline{\textbf{Step III:}} We turn to the estimate of $P_2$. A simple translation yields
\[
P_2 =  \Pro\left( \Big\{\widetilde{f}> 0 \text{ on } [-\tfrac N 2,\tfrac N 2 ] \Big\} \cap
\Big\{\sup_{\left[- N /2, N/ 2 \right]} \widetilde{F}_r \le M(N) \Big\}  \right),
\]
 where $\widetilde{f}(x) = f(x+\tfrac N 2)$ has the same distribution as $f$, and $\widetilde{F}_r := F_r(x+\tfrac N 2)$ obeys the same differential relation as $F_r$ (namely, $F_r^{(r)} \overset{d}{=} f$ if $r=k\ge 1$ and $F_{r}' \overset{d}{=} f$ if $r<1$).
Since $k\le \frac{N}{2}$ we may apply Theorem~\ref{thm: anal} to get
\begin{align} \label{eq: sum on R}
P_2\le \Pro \left( \Big\{f> 0 \text{ on }  [-\tfrac N 2,\tfrac N 2 ]\Big\} \cap
 \Big\{\frac 1 N \int_{-\tfrac 9{40} N}^{\tfrac 9{40} N } f \le  L\Big\}  \right) ,
\end{align}
where, by \eqref{eq: bounds on M},
\begin{equation} \label{eq: L}
L =
\begin{cases}
(Ck)^k  \sqrt{ m_{-2k}}  \, \ell & k> 0 \\
C(s)\sqrt{m_{-s}}\, \ell, & k=0.
\end{cases}
\end{equation}
Set $I_N=[-\tfrac 9{40} N , \tfrac 9{40} N-1 ]\cap \Z$. Observe that if $f>0$ and $\frac 1 N \int_{-9N/40}^{9N/40}f \le  L$, then
\[
\Big|\Big\{ v\in [0,1]:  \tfrac{1}{N} \sum_{n\in I_N} f(n+v) <2L\Big\}\Big|\ge \frac 1 2.
\]
Let $v\sim \text{Unif}([0,1])$ be a uniform random variable which is independent of $f$. On the product of the probability spaces of $f$ and $v$ define the events
\[
V = \Big\{f(n+v)> 0, n\in I_N\Big\}\cap \Big\{\frac{1}{N} \sum_{n\in I_N} f(n+v) <2L\Big\},
\]
and
\[
U = \Big\{f(t)> 0, t\in [0,N] \Big\}\cap \Big\{ \frac 1 N\int_{-\tfrac 9{40} N}^{\tfrac 9{40} N } f \le  L\Big\}.
\]
Then
\[
\Pro_{f,v}\big(V \big) \ge \Pro_{f,v}\big(V \mid U\big) \Pro_{f}\big(U\big) \ge \frac 1 2 \Pro_{f}\big(U\big).
\]
The estimate in \eqref{eq: sum on R} and the stationarity of $f$ now imply that
\begin{equation}\label{eq: split}
P_2 \le 2\Pro_{v,f} \big(V \big) \le
2\Pro_{f}\Big(\Big\{f(n)> 0, n\in I_N\Big\}\cap \Big\{\tfrac{1}{|I_N|} \sum_{n\in I_N} f(n) <6L\Big\}\Big).
\end{equation}

\medskip

\underline{\textbf{Step IV:}} By Claim \ref{clm: E in pi} we assume, without loss of generality, that $d\rho(\lm)\ge \nu\ind_E(\lm)d\lm$ for $\nu>0$ and $E\subseteq [-\pi,\pi]$.
By Claim \ref{clm: Riesz} used with $\ep=\tfrac 1 2$, there exist $\Lambda=\{\lm_n\}\subseteq\Z$  of density $A=\tfrac 1 {4\pi} |E|$ and a number $B= \sqrt{C \nu |E|}$ such that
\begin{equation}\label{decompazition}
f(\lm_n)\overset d = B Z_n \oplus g_n, \quad \text{ where } \{Z_n\} \text{ are i.i.d. }\calN (0,1).
\end{equation}

 By the definition of the density $D^{-}(\Lambda)$, there exists $N_0$, depending only on $E$, such that for all $N>N_0$ we have $|\Lambda\cap I_N|>A|I_N|/2$. From this point we assume $N$ to satisfy this condition. Denote
 \begin{equation}\label{eq: d}
 d=\Big\lfloor  \frac{A|I_N|}{4}\Big\rfloor,
 \end{equation}
 where $\lfloor a\rfloor$ is the integer value of $a$. Let $\Lambda_N$ be the set containing the smallest $2d-1$ elements of $\Lambda\cap I_N$.

For $\tau\in [0,|I_N|-1]\cap\Z$  consider the two disjoint sets $(\Lambda_N+\tau)\cap I_N$ and $(\Lambda_N+\tau-|I_N|)\cap I_N$.  One of these sets has more elements then the other and, in particular, at least $d$ elements. Let $\widetilde{S}(\tau)$ be the first $d$ elements of that set. Further, let $S(\tau)$ be the corresponding translate of $\widetilde{S}(\tau)$ by either $\tau$ or $\tau-|I_N|$ so that $S(\tau)\subseteq \Lambda_N$.

If $\frac{1}{|I_N|} \sum_{j\in I_N} f(j) <6L$ then, by Claim \ref{average}, there exists $\tau\in [0,|I_N|-1]$ so that,
\[
\frac{1}{|\Lambda_N|} \sum_{j\in (\Lambda_N+\tau)\cap I_N} f(j)+\frac{1}{|\Lambda_N|} \sum_{j\in (\Lambda_N+\tau-|I_N|)\cap I_N} f(j) <6L.
\]
Recalling that $|\Lambda_N|= 2d-1<2d$, this implies, in particular, that
\[
\frac{1}{d} \sum_{j\in \widetilde{S}(\tau)} f(j)<12 L.
\]
This allows us to apply a simple union bound to the expression in \eqref{eq: split} and find that
\[
\begin{aligned}
P_2  &\le 2\sum_{\tau= 0}^{|I_N|-1}\Pro \Big(\Big\{f(j)> 0, j\in \widetilde{S}(\tau)\Big\}\cap \Big\{\frac{1}{d} \sum_{j\in \widetilde{S}(\tau)} f(j) < 12 L\Big\}\Big)\\
&= 2\sum_{\tau= 0}^{|I_N|-1} \Pro \Big(\Big\{f(j)> 0, j\in S(\tau)\Big\}\cap \Big\{\frac{1}{d} \sum_{j\in S(\tau)} f(j) <12 L\Big\}\Big),
\end{aligned}
\]
where the last step is due to the stationarity of the process $f$.

Let
\[
 \Sigma := \Big\{\underline{x}:=(x_1,\dots,x_d): \:  x_1,\dots,x_d \geq 0, \ \frac 1 d \sum_{n=1}^d x_n \leq
12 L\Big\}\subset \R^d,
\]
and denote $\underline{Z}:=(Z_1,...,Z_d)$  where the $Z_i$ are i.i.d. random variables with distribution $\calN (0,1)$. Since {for every $\tau$}, $S{(\tau)}\subset \Lambda$ and $|S(\tau)|=d$, the decomposition in \eqref{decompazition}, combined with the estimate $|I_N|<N$, implies that
\begin{equation}\label{sigma-ineq-1}
P_2 \leq 2N \sup_{\underline{g}\in\R^d}\Pro_{Z} \Big(B\underline{Z}\in  \Sigma-\underline{g}\Big),
\end{equation}

\medskip

\underline{\textbf{Step V:}}
We claim that,
\begin{equation}\label{sigma-ineq-2}
\sup_{\underline{g}\in\R^d}\Pro_{Z} \Big(B\underline{Z}\in  \Sigma-\underline{g}\Big) \le   \Pro\left(B |Z_1| \le C L\right)^{d}
 \end{equation}
 where $C>0$ is a universal constant. To this end we consider two cases.

\smallskip
{\bf Case 1: ${144}L B^{-1}< 1$}. For a fixed $\underline{g}\in\R^d$ we have
\begin{align*}
\Pro_{Z} \Big(B\underline{Z}\in  \Sigma-\underline{g}\Big)  &\le B^{-d} \left|\Sigma-\underline{g}\right| = B^{-d}|\Sigma| =  \frac {(12 \,LdB^{-1})^d }{d!}
\\&\le (36 \,L B^{-1})^d
 \le \Pro\left(B |Z_1| \le 144 \,L \right)^{d},
\end{align*}
where the last step holds by Lemma \ref{lem: tail}(b).

\smallskip
{\bf Case 2: $144 \, LB^{-1}\geq 1$}.
 For a fixed $\underline{g}\in\R^d$ we first note that if $ \frac 1 d\sum_{n=1}^d g_n >12 \, L$, then the shifted simplex $\Sigma-\underline{g}$ does not contain $\underline{0}$. Therefore, in that case, there exists another shift $\underline{h}$, with $ \frac 1 d\sum_{n=1}^d h_n\leq 12 \, L$, such that $ \Pro_Z( B \underline{Z} \in \Sigma-\underline{g})\leq \Pro_Z( B \underline{Z} \in \Sigma-\underline{h}) $ (one may take $h$ to be the point where $\norm{x}$ attains its minimum on $\Sigma-g$, noticing that the density of $Z$ is monotone decreasing in $\norm x$).
We can therefore assume, without loss of generality, that $ \frac{1}{d}\sum_{n=1}^d g_n \leq 12 \, L$. By Claim~\ref{clm: iid} we have
\[
\Pro\left( B \underline{Z}\in \Sigma -\underline{g} \right)  \le  \Pro\left(B Z_n + g_n {\geq}0, n=1,...,d \right) \le \Pro\left( B Z_1  {\leq} 12 \,L \right)^d.
\]
Now, Applying Claim~\ref{clm: tails comp} with $ \delta=1/ 12 $ we get, for some (universal) $\eta>0$, that
$\Pro\left( B Z_1  {\leq 12 \,L} \right) < \Pro\left (B |Z_1|\le 12 \,\eta L\right)$. This establishes \eqref{sigma-ineq-2} with $C=\max(144, 12\, \eta)$.

Inserting \eqref{sigma-ineq-2} into \eqref{sigma-ineq-1} we find that
\begin{equation}\label{sigma-ineq-3}
P_2\leq 2N\Pro\left (B |Z_1|\le CL\right)^d.
\end{equation}

\medskip
\underline{\textbf{Step VI:}} We insert the estimates \eqref{eq: Gc} and \eqref{sigma-ineq-3} into \eqref{eq: p12}, to find that under the conditions of the theorem,
\begin{equation}
\begin{aligned}
P_f(N)\leq  &2\, \Pro\left(N^{-r}Z>\ell \right)+ 2N\,\Pro\left (B |Z_1|\le CL\right)^d\\
=&{2}\, \Pro\left(N^{-r}Z>\ell \right)+ 2N\, \Pro\left(\beta |Z_1|\le \ell\right)^{\alpha N},
\end{aligned}
\end{equation}
where due to \eqref{eq: L} and the fact that $B=\sqrt{C\nu|E|}$,
\[
\beta=
\begin{cases}
(Ck)^{-k}\sqrt{\tfrac{\nu|E|}{m_{-2k}}}, & k> 0 \\
C(s)\sqrt{\tfrac{\nu|E|}{m_{-s}}}, & k=0,
\end{cases}
\]
while due to \eqref{eq: d},
$\alpha=C|E|$, for some universal constant $C$. This completes the proof of the theorem.

\medskip
We end this section by proving Remarks \ref{rmk: N0} and \ref{rmk: thm relax}.

\smallskip
\emph{Proof of remark \ref{rmk: N0}:}
Recall that $N_0$ was such that for $N>N_0$ and any interval {$I=[-N/2,N/2]$} we have $|\Lambda\cap I|>D^{-}(\Lambda) \, |I|/2$, where $\Lambda$ is the set from Theorem \ref{thm: ver}.
Now, if $E$ is {contains} an interval $J$, one can directly check that the lattice $\Lambda_E = \tfrac{2\pi}{|J|}\Z$ obeys the properties in Theorem \ref{thm: ver}. Therefore, indeed, we may take $N_0 = \frac{2\pi}{|J|}$.

\smallskip
\emph{Proof of remark \ref{rmk: thm relax}:
We note that the condition $m_{-\g}<\infty$ was used only twice: first, for $k\ge 1$, in order to apply Observation \ref{obs: integ} and obtain stationarity of $F_k$, and second, for all $k$,
in order to apply Observation \ref{obs: IBP} and obtain $\rho([0,\lm]) \le \frac{1}{2}m_{-s} {(\rho)}\lm^{s}$ for all $\lm>0$.
Therefore, in case $k=0$ (that is, $\g<2$), we may directly assume that $\rho([0,\lm]) \le b \lm^{s}$ for all $\lm>0$ and some $b>0$.
The proof goes through, with $m_{-s}$ replaced by $2b$ everywhere.
}

\subsection{Explicit upper bounds: proof of Proposition \ref{cor: up} }
In this section we prove Proposition~\ref{cor: up}
by applying Theorem \ref{thm: balance up}. 
Firstly, we note that we may apply Theorem \ref{thm: balance up}, even though our assumption 
$\si_N^2 < b N^{-\g}$ 
is formally slightly weaker than the assumption $m_{-\g}<\infty$ of the theorem (see \eqref{eq: IBP items}). 
Indeed, for $\g\le 1$, this is possible, by Remark~\ref{rmk: thm relax};
while for $\g>1$, 
we use \eqref{eq: IBP items} to deduce that
$m_{-\g'}<\infty$ for all $\g' \in (1,\g)$, and then apply Theorem~\ref{thm: balance up} with $\g$ replaced by some $\g'\in(1,\g)$  (the form of the upper bound will not be effected by this change). For simplicity, we keep using the letter $\g$ (and not $\g'$) also in the latter case.

In each case, we must choose an appropriate $\ell=\ell(N,\beta,\g)$ in Theorem \ref{thm: balance up}. We leave it to the reader to verify that in all cases our choice satisfies
\begin{equation}\label{eq:cond verify} \ell \geq 2 N^{-r}\max\left(\sqrt{\log (\sqrt{\tfrac{m_{-2k+2}}{4 m_{-2k}}} N)},1\right)
\end{equation}
 for large enough $N$ (depending on $\beta,\g$), thus fulfilling the requirements of the theorem.
 
\smallskip
{\bf Case 1: spectrum exploding at $0$ ($0<\g <1$)}.
Set $\ell>0$ so that
$e^{-\ell^2/\beta^2} = \frac{\log N}{N^{1-\g}} $.

{We note that, as \eqref{eq:cond verify} is satisfied, we may apply Lemma \ref{lem: tail}(a).} 
Using this and the inequality $\log(1-x) \le -x$
we obtain {for some positive constant $c_1$},
\begin{align*}
P_f(N) &\le
2\, \Pro(N^{-\g/2 } Z > \ell) + 2{q}N\, \Pro\left( \beta |Z|\le  \ell \right)^{\al N}
\\ & \le2  e^{- N^\g \ell^2 /2 } + 2{q}N\left( 1-2e^{- \ell^2/\beta^2  } \right)^{\al N}
\\ & \le 2 e^{- N^\g \ell^2 /2 }+ 2{q}N e^{-\al N \cdot 2 e^{-\ell^2/\beta^2}}
\\ & \le 2 e^{-c_1  N^\g \log N}  + 2{q}N e^{-2\al N^\g \log N} \le  \exp\left( - C N^\g \log N\right).
\end{align*}

\smallskip
{\bf Case 2: spectrum bounded near $0$ ($\g=1$).}
In this case we choose $\ell=1$ (or any other constant) to obtain the exponential bound:
\[
P_f(N) \le 2\, \Pro(Z>\sqrt N) + 2 qN \, \Pro \Big(\beta|Z|\le 1\Big)^{\al N} \le e^{-C N}.
\]

\smallskip
{\bf Case 3: spectrum vanishing at $0$ ($\g>1$).}
Again we note that, as \eqref{eq:cond verify} is satisfied, we may apply Lemma~\ref{lem: tail}.
Using both parts of this lemma together with Theorem \ref{thm: balance up} we get that
\begin{equation}\label{eq: up to open}
P_f(N)  \le
2\, \Pro( N^{-r } Z > \ell) + 2qN\,\Pro\left( \beta |Z|\le  \ell \right)^{\al N} \le 2e^{-N^{2r} \ell^2/2 }+ 2q N \, \left( \frac{\ell^2}{\beta^2} \right)^{\al N/2}.
\end{equation}

Setting
$\ell^2= \beta^2 (2r-1) N^{1-2r} \log N$ we get {for some constant $c_1$,}

\begin{align*}
P_f(N) &
\le 2e^{-\tfrac 1 2 \beta^2 (2r-1) N \log N } + 2qN \, e^{-{c_1} (2r-1) N \log N}
\\ &  \le \exp(-C N\log N).
\end{align*}

In all cases, the estimate holds for $N>N_0(E, \al,\beta,\g)$ and $C(\al,\beta,\g)$.
Moreover, $C$ is proportional either to $\al$ or $\beta^2$, thus is linear in $|E|$.

\subsection{Vanishing of infinite order: proof of Proposition \ref{cor: up inf}}\label{sec: inf order}

For a given $N>0$ we may apply Theorem \ref{thm: balance up} with $\g =2k$, so long as $m_{-2k}<\infty$, $k \le  \frac{N}{2}$ and our chosen level $\ell(N)>\ell_0(N)$. We will choose $k=k(N)$ later.
Denote $\tau :=\sup \{ \sqrt{ m_{-2k+2} / m_{-2k}} : \: k \in \N_0, m_{-2k}<\infty\}$,
and note that $\tau<\infty$ by Claim \ref{clm: finite tau}. Let
\begin{equation}\label{eq: ell inf}
\ell(N)=  N^{-k}\cdot \sqrt{c_0 |E| k N \log \frac{\tau N}{c_1 k}},
\end{equation}
where all the constants ($c_0, c_1, |E|$) are as in Theorem \ref{thm: balance up}.
Notice that indeed, this choice obeys $\ell(N)>\ell_0(N)$, for every large enough $N$ and $k \le \frac{N}{2}$.
By \eqref{eq: up to open} and the explicit forms of $\al, \beta$ and $\ell$ we get:
\begin{align}\label{eq: explicit up}
P_f(N) & \notag\le 2\exp(-N^{2k} \ell^2/2) + 2q N \exp\big(\al N (\log \ell-\log \beta) \big)
\\ & \le 2\exp\left(- \frac{c_0}{2}|E| k N \log \frac{\tau N}{c_1 k} \right)
\\ & \notag\: + 2q N \exp\left\{ c_0|E| N   \left( -k\log\frac{N }{c_1 k} +\frac 1 2 \log \frac{c_0 m_{-2k}}{\nu}  + \frac 1 2\log \left( k N \log \frac {\tau N}{c_1 k}\right) \right) \right\}
\end{align}
Now, we use $k=k(N)\in \N_0$ which satisfies
\begin{equation}\label{eq: k}
 c_1 k\left( \frac{c_0  m_{-2k} }{\nu  }   \right)^{1/k}\le  N , \quad \text{and}\quad  k\le \frac{N}{2}.
\end{equation}
For this $k$ we have $\displaystyle \frac 1 2 \log \frac{c_0 m_{-2k}}{\nu  } \le \frac k 2 \log \frac N {c_1 k}$, so \eqref{eq: explicit up} becomes

\[
\begin{aligned}\label{eq: inf end}
P_f(N)&\le 2\exp\left(- {\frac{c_0}{2}}|E| k N \log \frac{\tau N}{c_1 k} \right)\\&\hspace{10pt}
+ 2q N \exp\left\{ c_0|E| N   \left( -\frac{k}{2}\log\frac{N }{c_1 k}  + \frac 1 2\log \left( k N \log \frac {\tau N}{c_1 k}\right) \right) \right\}\\
&\le \exp\left(- C N k  \log \frac{cN}{ k} \right),
\end{aligned}
\]
for some constants $C$ and $c$ depending on $\rho$. 
Finally, we note that, if $\nu$ and $E$ do not vary with $N$, one may replace \eqref{eq: k} with the choice of any integer $k=k(N)$ satisfying
\[
1\le k\le \min\left(\tfrac 1 2,c\right) N, \quad k \,m_{-2k}^{1/k}\le  N.
\]
This will effect our bound only by a multiplicative constant in the exponent.
The specific examples follow easily.

\subsection{Tiny persistence: proof of Proposition \ref{cor: small}}\label{sec: small}

Here we prove Proposition \ref{cor: small} from Theorem \ref{thm: balance up}. Let $w(\lm)$ be the density of the
absolutely continuous component of the spectral measure. Condition \eqref{eq: AC cond} becomes $w \ge \nu \ind_E$ for some $E$ of positive measure and $\nu>0$. The proof will follow that of Proposition \ref{cor: up inf}, however this time choose $E=[1,x_N]$ and a suitable ${\nu_N}$, both of which depend on $N$.
We note that, by Remark \ref{rmk: N0}, {our} estimates are valid for {all} $N>1$ (provided that $x_N$ is big enough).
We shall choose $k=k(N)$ that satisfies \eqref{eq: k} and $\ell=\ell(N)$ as in \eqref{eq: ell inf}.
It remains to optimize the choice of $x_N$.

\smallskip
{\bf Part 1.}
Since $w(\lm)=0$ for $|\lm|\le 1$, we have $m_{-k} \le 1$ for all $k>0$.
Also, $w(\lm)\ge x_N^{-\al}$ for $\lm\in [1,x_N]$. Putting $\nu_N=x_N^{-\al}$, \eqref{eq: k} becomes
\[
c_1 c_0^{1/k}k x_N^{{\al}/k} \le N  \quad \text{and}\quad  k\le \tfrac{N}{2}.
\]
By choosing $x_N = e^{\frac{cN}{\al}} $ and $k= cN$,  for an appropriate universal constant $c>0$, we satisfy these inequalities. We may therefore conclude by \eqref{eq: explicit up}
that for large enough $N$,
\[
P_f(N) \le {3Nx_N}\exp(-C x_N \cdot N^2 ) \leq \exp(- e^{{C_1} N}).
\]

\smallskip
{\bf Part 2.}
In this case we have $k=2$ and $\nu=x_N^{-\al}$. Thus
\eqref{eq: k} becomes $c x_N^{\al/2} \le N$, for some constant $c>0$.
 This is satisfied by the choice $x_N = \left(\frac{N}{c}\right)^{1/\al}$, which yields the bound $P_f(N) \le \exp(-C N^{1+
 \frac 2 {\al}}\log N)$.

\subsection{Leading constant: proof of Proposition \ref{cor: leading}}
In this section we construct a family of examples which establish Proposition~\ref{cor: leading}.
Fix $w\in L^1([-\pi,\pi])$ 
obeying $a< w(\lm) < b$ for all $\lm \in [-\delta,\delta]$.  Denote by $f_1$ the GSP whose spectral measure has density $w$. By Theorem~\ref{Thm: main presentable}, there exist constants $C_1,C_2\in (0,\infty)$ such that
$
C_1 < - \frac 1 N \log P_{f_1}(N) < C_2, 
$
for all $N$ sufficiently  large.

Now, let $f_2$ be the GSP whose spectral measure has density $w(\lm) + \nu \ind_{\pm [\delta,\pi]}(\lm)$, where $\nu$ is to be defined shortly. 
Clearly, the spectral measures of $f_1$ and of $f_2$ coincide in $[-\delta, \delta]$.
Once again, by Theorem~\ref{Thm: main presentable}, we have
$C_4 < - \frac 1 N \log P_{f_2}(N) < C_3$, for some $C_3,C_4\in (0,\infty)$ and all large enough $N>0$. Our goal is to show that if $\nu$ is sufficiently large, then $C_2<C_4$. By Theorem \ref{thm: balance up} and Remark \ref{rmk: thm relax}, it holds that
\[
P_{f_1}(N) \le \Pro\left( Z > \ell \sqrt{N} \right)  +  2N \Pro\left( \nu \frac{\pi-\delta}{b}|Z|\le \ell  \right)^{c(\pi-\delta)N},
\]
where $\delta$, $b$ and $c$ are fixed. We proceed similarly to the proof of Proposition~\ref{cor: up}.
For $\ell=2 \sqrt{C_2}$, using Lemma~\ref{lem: tail}(a), we have
\[
\Pro\left( Z > \ell \sqrt{N} \right)  \le e^{-\frac 1 2 \ell^2 N}= e^{- 2C_2 N}.
\]
Next choose $\nu$ so large that, for sufficiently large $N$, 
\[
2N\, \Pro\left( \nu \frac{\pi-\delta}{b}|Z|\le \ell  \right)^{c(\pi-\delta)N} \le e^{- 2C_2 N}.
\]
This implies 
\[
C_1 < - \frac 1 N \log P_{f_1}(N) < C_2 < 2C_2 -\frac{\log 2}{N} \le  - \frac 1 N \log P_{f_2}(N) < C_3,
\]
as required.

\section{Proof of the analytic theorem}\label{sec: anal}
In this section we prove Theorem~\ref{thm: anal}, first for continuous and then for discrete time.

\subsection{Continuous time}
Here we always assume $I=[-1,1]$ (the general case follows by scaling).
We will prove the following slightly stronger version of Theorem \ref{thm: anal} in continuous time.
Notice that the constant $\pm \tfrac 9{20}$ in the integral limits is improved to be $\pm \tfrac 9{10}$.

\begin{prop}\label{thm: anal contra}
Let $f$ be $k$-times differentiable with $\inf_I f^{(k)}>0$. Suppose that
$ \int_{-0.9}^{0.9} f^{(k)}\ge k!$.
Then $\sup_{I} |f| \ge \frac 1{c^k}$, where $c>0$ is a universal constant.
\end{prop}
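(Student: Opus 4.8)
The plan is to prove the contrapositive-friendly statement directly by exhibiting a linear functional that detects a large $k$-th derivative but is bounded by $\sup_I|f|$. The key object is the Chebyshev polynomial $T_k$ of degree $k$ on $I=[-1,1]$, which is the extremal competitor suggested in the introduction: it oscillates between $\pm1$, so $\sup_I|T_k|=1$, and its leading coefficient is $2^{k-1}$, hence $T_k^{(k)}\equiv 2^{k-1}k!$. First I would build, from $T_k$, a test polynomial $p$ of degree $k-1$ whose relationship to $f$ via integration by parts isolates $\int f^{(k)}$. Concretely, one wants a function $\varphi$ supported (morally) on $[-0.9,0.9]$ against which $\int \varphi f^{(k)}$ can be integrated by parts $k$ times to produce $(-1)^k\int \varphi^{(k)} f$ plus boundary terms; choosing $\varphi$ so that $\varphi$ and its first $k-1$ derivatives vanish at the endpoints of $[-0.9,0.9]$ kills the boundary terms, and then $|\int \varphi^{(k)} f|\le \|\varphi^{(k)}\|_{L^1([-0.9,0.9])}\cdot \sup_I|f|$. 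So the whole game reduces to finding such a $\varphi$ with $\varphi^{(k)}\ge 0$ (so that $\int\varphi f^{(k)}\ge \inf\varphi\cdot\int_{-0.9}^{0.9}f^{(k)}$ can't be used directly — rather we want a lower bound on $\int\varphi f^{(k)}$) and with $\|\varphi^{(k)}\|_{L^1}$ as small as possible relative to $\int_{-0.9}^{0.9}\varphi\, f^{(k)}$'s guaranteed size.

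The cleaner route, and the one I would actually carry out, is to use the hypothesis $f^{(k)}>0$ on $I$ together with the assumption $\int_{-0.9}^{0.9}f^{(k)}\ge k!$ to say: there is a probability-like measure behavior of $f^{(k)}$ on $[-0.9,0.9]$ with total mass at least $k!$. Pick any nonnegative $\varphi\in C^\infty$ with $\varphi\equiv 1$ on $[-0.9,0.9]$ and $\mathrm{supp}\,\varphi\subset[-1,1]$, with all derivatives vanishing at $\pm1$. Then
\[
\int_{-1}^{1}\varphi(t) f^{(k)}(t)\,dt \ge \int_{-0.9}^{0.9} f^{(k)}(t)\,dt \ge k!,
\]
using $f^{(k)}>0$ and $\varphi\ge 0$, $\varphi\ge\mathbf 1_{[-0.9,0.9]}$. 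On the other hand, integrating by parts $k$ times (all boundary terms vanish since $\varphi$ and its derivatives vanish at $\pm1$),
\[
\int_{-1}^{1}\varphi(t) f^{(k)}(t)\,dt = (-1)^k\int_{-1}^{1}\varphi^{(k)}(t) f(t)\,dt \le \|\varphi^{(k)}\|_{L^1(I)}\cdot \sup_I|f|.
\]
Combining, $\sup_I|f|\ge k!/\|\varphi^{(k)}\|_{L^1(I)}$, so it remains to choose $\varphi$ so that $\|\varphi^{(k)}\|_{L^1(I)}\le k!\,c^k$ for a universal $c$, which would finish the proof.

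The main obstacle is precisely this last point: a fixed smooth cutoff $\varphi$ has $\|\varphi^{(k)}\|_{L^1}$ growing like $k!$ times a constant to the $k$ only if $\varphi$ is chosen with care — a generic bump will have much larger high derivatives. This is where Chebyshev/approximation theory genuinely enters. The right choice is to take $\varphi$ to be (a smoothed, or rescaled, version of) a polynomial built from $T_k$: one can take $\varphi$ to be a degree-$k$ polynomial that equals roughly $1$ on the bulk $[-0.9,0.9]$ and is small near the endpoints, exploiting that the extremal growth of derivatives of bounded polynomials is governed exactly by Chebyshev polynomials (Markov brothers' inequality), which gives $\|\varphi^{(k)}\|_{\infty}\asymp$ (leading coefficient of $T_k$) $\cdot k! = 2^{k-1}k!$ up to the rescaling factor $(1/0.9)^k$ from mapping $[-0.9,0.9]$ to $[-1,1]$. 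Thus I expect $\|\varphi^{(k)}\|_{L^1(I)}\le 2\cdot(10/9)^k\cdot 2^{k-1}k!$, giving $c=\frac{20}{9}<3$ or so. The delicate part is handling the polynomial $\varphi$ not being compactly supported — then the integration by parts produces boundary terms at $\pm1$ involving $f^{(j)}(\pm1)$, which are not controlled — so one must either smooth $\varphi$ near $\pm1$ (and check the smoothing does not inflate $\|\varphi^{(k)}\|_{L^1}$ beyond a constant factor) or, more elegantly, choose $\varphi$ so that $\varphi^{(j)}(\pm1)=0$ for $0\le j\le k-1$ while keeping $\varphi\ge\mathbf 1_{[-0.9,0.9]}$; a suitable product $\varphi(t)=q(t)\cdot(1-t^2)^{k}$ with $q$ a low-degree correction, or a rescaled Chebyshev construction, should achieve this. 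Verifying that such a $\varphi$ exists with the stated derivative bound — i.e. that the Chebyshev polynomial really is extremal for this one-sided problem — is the technical heart of the argument.
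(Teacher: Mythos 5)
Your route is genuinely different from the paper's. The paper interpolates $f$ at the $k+1$ Chebyshev extremal points, lower-bounds the leading coefficient $f[x_0,\dots,x_k]$ of the interpolant by $M^{-k}$ via the Hermite--Genocchi formula (the key point being that the pushforward of the simplex measure onto $[-1,1]$ has density at least $\tfrac{1}{k!L^k}$ on $[-0.9,0.9]$), and then invokes the extremal property of $2^{1-k}T_k$ among monic degree-$k$ polynomials evaluated at those points to conclude $\sup_I|f|\ge |a|\,2^{1-k}$. Your argument is the dual one: rather than interpolating $f$, you test $f^{(k)}$ against a fixed flat bump and integrate by parts $k$ times. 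Once the test function is in hand this is arguably more elementary (no divided differences, no simplex geometry), and the skeleton $k!\le\int\varphi f^{(k)}=(-1)^k\int\varphi^{(k)}f\le\|\varphi^{(k)}\|_{L^1(I)}\sup_I|f|$ is sound.

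However, you explicitly leave the decisive estimate unproven: the existence of $\varphi\ge\mathbf 1_{[-0.9,0.9]}$, nonnegative, with $\varphi^{(j)}(\pm1)=0$ for $0\le j\le k-1$ and $\|\varphi^{(k)}\|_{L^1(I)}\le c^k\,k!$. That is not a detail --- it is the entire content of the proposition in your formulation --- so as written the proof is incomplete. The gap is fillable, and more cheaply than via the Markov-type reasoning you sketch: take $\varphi(t)=(0.19)^{-k}(1-t^2)^k$. Then $\varphi\ge 0$ on $I$; $\varphi\ge 1$ on $[-0.9,0.9]$ since $1-t^2\ge 0.19$ there; $\varphi$ vanishes to order exactly $k$ at $\pm1$, killing all boundary terms; and by Rodrigues' formula $\frac{d^k}{dt^k}(1-t^2)^k=(-1)^k2^kk!\,P_k(t)$ with $|P_k|\le 1$ on $I$, so $\|\varphi^{(k)}\|_{L^1(I)}\le 2\,(2/0.19)^k\,k!$, giving $\sup_I|f|\ge\tfrac12(0.095)^k$ and hence the claim with, say, $c=22$. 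Two smaller remarks: a degree-$k$ polynomial cannot vanish to order $k-1$ at both endpoints (that forces divisibility by $(t^2-1)^{k-1}$, of degree $2k-2$), so your parenthetical ``degree-$k$ polynomial'' cannot work and the test function must have degree about $2k$, as your $q(t)(1-t^2)^k$ suggestion correctly anticipates; and your first paragraph's detour through $T_k$ itself is a red herring here --- the extremal object for your dual problem is the Legendre-type bump above, not the Chebyshev polynomial.
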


The proof will use Chebyshev's polynomials of the first kind, defined through
\[
T_k(x) = \cos (k \arccos x), \quad x\in[-1,1].
\]
Fix $k\in \N$, and let $x_j=\cos \left(\frac {(k-j)\pi}{k}\right)$, $j=0,1,\dots,k$ be the $k+1$ extremal points for $T_k(x)$ (hereafter called ``Chebyshev extrema of order $k$''). Notice that $-1=x_0<x_1<\dots < x_k=1$ and $T_k(x_j)=(-1)^{k-j}$.

Let the $L^\infty(\{x_j\})$-norm of a function $f:\{x_j\}\to\R$ be $\norm f = \max{|f(x_j)|}$.
A classical property of Chebyshev polynomials is the following.

\begin{clm}\label{clm: min norm}
For $k\in\N$, the polynomial $a 2^{1-k} T_k$ has minimal $L^\infty(\{x_j\})$-norm among all polynomials of degree $k$ and leading coefficient $a>0$.
The value of this norm is $a 2^{1-k}$.
\end{clm}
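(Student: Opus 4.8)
The plan is to run the classical Chebyshev ``equioscillation'' comparison argument, but on the finite set of nodes $\{x_j\}_{j=0}^k$. First I would record the normalization facts: the recurrence $T_{k+1}=2xT_k-T_{k-1}$ shows by induction that $T_k$ has leading coefficient $2^{k-1}$, so $a2^{1-k}T_k$ is a polynomial of degree $k$ with leading coefficient $a$; and since $T_k(x_j)=(-1)^{k-j}$, we have $|a2^{1-k}T_k(x_j)|=a2^{1-k}$ for every $j$, so its norm over $\{x_j\}$ equals exactly $a2^{1-k}$. This shows the claimed minimum value is attained.

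It remains to show that no competitor does better. Suppose, for contradiction, that $p$ is a polynomial of degree $k$ with leading coefficient $a>0$ satisfying $|p(x_j)|<a2^{1-k}$ for all $j=0,1,\dots,k$. Set $q:=a2^{1-k}T_k-p$. The degree-$k$ terms cancel, so $\deg q\le k-1$; moreover $q\not\equiv 0$, because $|p(x_0)|<a2^{1-k}=|a2^{1-k}T_k(x_0)|$. At each node, $a2^{1-k}T_k(x_j)=(-1)^{k-j}a2^{1-k}$ strictly dominates $p(x_j)$ in absolute value, so $\sgn q(x_j)=(-1)^{k-j}$; hence $q$ strictly alternates sign along $x_0<x_1<\dots<x_k$. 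By the intermediate value theorem $q$ has a zero in each of the $k$ open intervals $(x_{j-1},x_j)$, $j=1,\dots,k$, giving at least $k$ zeros and contradicting the fact that the nonzero polynomial $q$ has degree at most $k-1$. Thus every admissible $p$ has norm at least $a2^{1-k}$, which together with the first paragraph proves the claim.

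I do not expect a genuine obstacle here: the only points needing a little care are bookkeeping the leading coefficient of $T_k$ and noting that $q$ is not the zero polynomial (otherwise the zero count proves nothing). If one also wants uniqueness of the minimizer -- which the statement as phrased does not demand -- the same argument run under the weaker hypothesis $\max_j|p(x_j)|\le a2^{1-k}$ forces $q$ to vanish at $k$ interior points and, by a short look at the endpoints, to be identically zero, so $p=a2^{1-k}T_k$.
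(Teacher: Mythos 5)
Your proof is correct and follows essentially the same route as the paper: form the difference with $a2^{1-k}T_k$, observe it has degree at most $k-1$ yet alternates sign at the $k+1$ Chebyshev extrema, and derive a contradiction via the intermediate value theorem. Your extra care in noting $q\not\equiv 0$ and in verifying the leading coefficient of $T_k$ is sound bookkeeping that the paper leaves implicit.
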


\begin{proof}
Without loss of generality, we may assume that $a=1$.
Suppose $P_k$ was a monic polynomial of degree $k$ with {$\norm {P_k}< 2^{1-k}$}.
Recall that $2^{1-k}T_k(x_j)=(-1)^{k-j} 2^{1-k}$ on all points $x_j$.
Thus the difference $w(x) = 2^{1-k}T_k(x)-P_k(x)$ alternates signs when evaluated on the points $\{x_j\}$.
By the intermediate value theorem, $w$ has at least $k$ roots. But this is impossible, since $w$ is a polynomial of degree $\le k-1$.
\end{proof}

Denote by $f[x_0,x_1,\dots,x_k]$
the leading coefficient of the unique degree-$k$ polynomial which interpolates $f$ at the points $x_0,x_1,\dots, x_{k}$. The next claim states that, under similar conditions to those of Proposition \ref{thm: anal contra}, this leading coefficient cannot be too small.

\begin{clm}\label{clm: leading con}
Suppose that $f:I\to\R$ is $(k-1)$-times differentiable, and that $f^{(k-1)}$ is piecewise differentiable with $f^{(k)}>0$ and 
$ \int_{-0.9}^{0.9} f^{(k)}\ge k!$.
Then:
\[
f[x_0,x_1,\dots,x_k] \ge M^{-k},
\]
where $M>0$ is a universal constant.
\end{clm}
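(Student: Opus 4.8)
The plan is to connect the divided difference $f[x_0,\dots,x_k]$ to the integral of $f^{(k)}$ via the classical Hermite--Genocchi (integral) formula for divided differences. Recall that for a $k$-times (piecewise) differentiable function one has
\[
f[x_0,x_1,\dots,x_k] = \int_{\Delta_k} f^{(k)}\!\Big(x_0 + \sum_{i=1}^k t_i (x_i-x_{i-1})\Big)\, dt_1\cdots dt_k,
\]
where $\Delta_k = \{(t_1,\dots,t_k): t_i\ge 0,\ \sum t_i \le 1\}$ is the standard simplex. Since $f^{(k)}>0$ everywhere on $I$, the integrand is positive, so one gets a lower bound by restricting the region of integration. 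The key point is that the map $(t_1,\dots,t_k)\mapsto x_0 + \sum t_i(x_i-x_{i-1})$ pushes forward Lebesgue measure on $\Delta_k$ (times $k!$) to the convex combinations of the nodes, and its image covers $[x_0,x_k]=[-1,1]$; by pulling back the mass of $f^{(k)}$ on $[-0.9,0.9]$ one should recover a constant multiple of $\int_{-0.9}^{0.9} f^{(k)} \ge k!$, after accounting for the Jacobian.

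The cleanest route is probably to use the telescoping identity $\sum_{i=1}^k (x_i - x_{i-1}) = x_k - x_0 = 2$ together with the observation that the spacings of the Chebyshev extrema satisfy $x_i - x_{i-1} \le C/k$ for a universal $C$ (in fact the maximal gap is $\Theta(1/k)$, occurring near the center). Thus the linear map $t\mapsto x_0+\sum t_i(x_i-x_{i-1})$ has all ``partial velocities'' bounded by $C/k$, and one can argue as follows: choose a particular face or sub-region of $\Delta_k$ — for instance fix all but one coordinate — over which the image sweeps across a sub-interval of $[-0.9,0.9]$; integrating $f^{(k)}$ there gives something like $(k/C)^{-(k-1)}\cdot$(something controlled). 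Iterating this coordinate by coordinate, or more efficiently invoking Hermite--Genocchi directly with a change of variables $u_j = x_0 + \sum_{i\le j} t_i(x_i - x_{i-1})$ whose Jacobian is $\prod_i (x_i - x_{i-1}) \ge (c/k)^{?}$ — wait, the gaps are not uniformly bounded below, so one must be careful. Instead I would split $[-0.9,0.9]$ into the $O(k)$ intervals $[x_{i-1},x_i]$ it meets, use $\int_{-0.9}^{0.9} f^{(k)} = \sum_i \int_{x_{i-1}\vee(-0.9)}^{x_i\wedge 0.9} f^{(k)} \ge k!$, and on each such interval bound the corresponding contribution to the Hermite--Genocchi integral from below by (gap length) times a lower envelope of $f^{(k)}$ — this is where the $(C/k)^k$ type factor enters, since each of the $k$ integration directions contributes a factor comparable to a gap length $\asymp 1/k$.

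A more robust alternative, and the one I would actually write, is to avoid the measure-theoretic bookkeeping entirely and argue via Claim~\ref{clm: min norm} and a duality/extremality statement. Consider the monic polynomial $Q(x) = f[x_0,\dots,x_k]^{-1}\, \cdot\,(\text{the degree-}k\text{ interpolant of } f \text{ at the } x_j)$; by Claim~\ref{clm: min norm} its $L^\infty(\{x_j\})$-norm is at least $2^{1-k}$, so $\max_j |(\text{interpolant})(x_j)| \ge 2^{1-k} f[x_0,\dots,x_k]$. But the interpolant agrees with $f$ at the nodes, so $\sup_I|f| \ge \max_j|f(x_j)| \ge 2^{1-k} f[x_0,\dots,x_k]$ — which is the wrong direction for proving Claim~\ref{clm: leading con} but is exactly what Proposition~\ref{thm: anal contra} needs once Claim~\ref{clm: leading con} is in hand. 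So Claim~\ref{clm: leading con} genuinely must come from a lower bound on the divided difference, i.e.\ from the Hermite--Genocchi formula or from the representation $f[x_0,\dots,x_k] = \sum_j f(x_j)\big/\prod_{i\ne j}(x_j - x_i)$ combined with positivity of $f^{(k)}$ (which forces sign-alternation control on the $f(x_j)$ relative to the interpolant). I expect the main obstacle to be precisely the non-uniformity of the Chebyshev node spacing near $\pm 1$: the formula $f[x_0,\dots,x_k] = \int_{\Delta_k} f^{(k)}(\,\cdot\,)$ handles this automatically, so I would commit to the Hermite--Genocchi approach, restrict the simplex integral to the preimage of a fixed central sub-interval (say $[-1/2,1/2]$, where node gaps are all $\asymp 1/k$ both above and below), and extract the clean bound $f[x_0,\dots,x_k] \ge c\cdot (\text{vol of that preimage})\cdot \inf_{[-1/2,1/2]} f^{(k)} \ge M^{-k}$, using that $\int_{-0.9}^{0.9} f^{(k)} \ge k!$ forces $\inf f^{(k)}$ (or at least its integral over a central window, after a further covering argument) to be bounded below appropriately. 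Packaging the covering argument so that the constant is genuinely universal and independent of $k$ is the delicate bookkeeping step.
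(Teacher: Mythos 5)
You correctly identify the Hermite--Genocchi formula as the engine, and the general shape of the argument --- lower-bound the divided difference by the volume of a suitable sub-region of the simplex on which $f^{(k)}$ is integrated --- is exactly the paper's. But the concrete plan you commit to at the end has a genuine gap. You propose to restrict the simplex integral to the preimage of a central window $[-\tfrac12,\tfrac12]$ and then bound the result below by $(\text{volume of preimage})\cdot\inf_{[-1/2,1/2]}f^{(k)}$, ``using that $\int_{-0.9}^{0.9}f^{(k)}\ge k!$ forces $\inf f^{(k)}$ (or its integral over a central window) to be bounded below.'' It does not: the hypothesis gives no information about where in $[-0.9,0.9]$ the mass of $f^{(k)}$ sits. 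The function $f^{(k)}$ could equal an arbitrarily small $\ep>0$ on $[-0.8,0.8]$ and carry all of its mass on $[0.8,0.9]$, making both $\inf_{[-1/2,1/2]}f^{(k)}$ and $\int_{-1/2}^{1/2}f^{(k)}$ arbitrarily small. No covering argument confined to a central window can repair this; you must retain control over the entire interval $[-0.9,0.9]$.

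The paper's fix is to prove a \emph{pointwise} lower bound on the pushforward density: writing the Hermite--Genocchi integral as $\int_{-1}^{1}f^{(k)}(s)\,g_k(s)\,ds$ where $g_k(s)$ is the $(k-1)$-volume of the slice $\{t\in\Sigma_k:\sum_j t_jx_j=s\}$, it shows $g_k(s)\ge (k!\,L^k)^{-1}$ for \emph{every} $|s|\le 0.9$, whence $f[x_0,\dots,x_k]\ge (k!L^k)^{-1}\int_{-0.9}^{0.9}f^{(k)}\ge L^{-k}$ regardless of where the mass lies. The slice bound comes from exhibiting, inside each slice, the explicit set $A_s$ where the interior coordinates $t_1,\dots,t_{k-1}$ range over the box $[0,\tfrac{1}{20(k-1)}]^{k-1}$ and $t_0,t_k$ are solved for from the two linear constraints; since $x_0=-1$ and $x_k=1$ and the interior coordinates perturb the weighted sum by at most $\tfrac1{10}$, the solution has $t_0,t_k\ge 0$ exactly when $|s|\le 0.9$, and $\mathrm{Vol}_{k-1}(A_s)\ge (20(k-1))^{-(k-1)}$. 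Note this also shows your worry about the non-uniform spacing of the Chebyshev extrema is a red herring: the argument uses only that the two extreme nodes are $\pm 1$, and squeezes all interior nodes into a negligible role.
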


First let us see how {Claim \ref{clm: leading con} may be used} to conclude the proof of Proposition \ref{thm: anal contra}.
\begin{proof}[Proof of Proposition~\ref{thm: anal contra}]
Let $f:I\to\R$ be as in the {premise}, that is, $f^{(k)}>0$ for all $x\in I$ and $\int_{-0.9}^{0.9} f^{(k)}\ge k!$.
Let $P_k$ be the interpolation polynomial of $f$ at the Chebyshev extremal points $\{x_j\}_{j=0}^k$, and let
$a=f[x_0,\dots,x_k]$ denote the leading coefficient of $P_k$.
By Claim~\ref{clm: leading con}, we have $|a|\ge M^{-k}$ (where $M>0$ is universal), so that by Claim~\ref{clm: min norm} we deduce that
\begin{align*}
\sup_I |f| &\ge \max_{0\le j\le k} |f(x_j)| =\max_{0\le j\le k} |P_k(x_j)|\ge |a| 2^{1-k}\ge (2M)^{-k}.
\end{align*}
\end{proof}

It remains to prove Claim \ref{clm: leading con}.  To do so, we apply the following standard result from interpolation theory, known as the \emph{Hermite-Genocchi formula} (see e.g. \cite{Sch}*{Thm 4.2.3}): 

\begin{lem}[The Hermite-Genocchi formula]\label{lem: HG}
Let $\{x_j\}_{j=0}^{k}\subset I$ be distinct points, and let $f: I\to \R$ be {$(k-1)$-times differentiable with $f^{(k-1)}$ being piecewise differentiable}. Then:
\[
f[x_0,x_1,\dots,x_{k}] = \int_{\Sigma_{k}} f^{(k)}\Big( t_0 x_0 +t_1x_1 +\dots t_{k} x_{k}\Big) d\sigma,
\]
where
\begin{equation}\label{eq: simplex}
\Sigma_{k} = \{(t_0,\dots,t_{k}): \; \forall j\ t_j\ge 0  \text{ and } \sum_{j=0}^{k} t_j=1 \},
\end{equation}
and $d\sigma$ is the induced volume form on $\Sigma_k$.
\end{lem}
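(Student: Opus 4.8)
The plan is to reduce to a standard model simplex and then prove the identity by induction on $k$. Observe that $\Sigma_k$ is the image of the standard simplex $S_k := \{(s_1,\dots,s_k)\in\R^k : s_i\ge 0,\ \sum_{i=1}^k s_i \le 1\}$ under the affine bijection $\phi(s_1,\dots,s_k) = (1-\sum_{i=1}^k s_i,\, s_1,\dots,s_k)$, and that if $(t_0,\dots,t_k)=\phi(s)$ then $t_0x_0+\dots+t_kx_k = x_0 + \sum_{i=1}^k s_i(x_i-x_0)$. Taking $d\sigma$ on $\Sigma_k$ to be the pushforward of Lebesgue measure on $S_k$ (so that $\sigma(\Sigma_k)=1/k!$), the asserted formula is equivalent to
\[
f[x_0,\dots,x_k] \;=\; \int_{S_k} f^{(k)}\Big(x_0 + \sum_{i=1}^k s_i(x_i-x_0)\Big)\, ds_1\cdots ds_k \;=:\; D(x_0,\dots,x_k),
\]
which I would prove by induction on $k$ for all distinct $x_0,\dots,x_k\in I$. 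For $k=1$, the right-hand side is $\int_0^1 f'\big(x_0+s_1(x_1-x_0)\big)\,ds_1$, and the substitution $u=x_0+s_1(x_1-x_0)$ together with the fundamental theorem of calculus gives $\frac{f(x_1)-f(x_0)}{x_1-x_0}=f[x_0,x_1]$.

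For the step $k-1\to k$ I would integrate out the last variable first, using Fubini. Fix $(s_1,\dots,s_{k-1})\in S_{k-1}$, write $\sigma=\sum_{i=1}^{k-1}s_i$ and $u=x_0+\sum_{i=1}^{k-1}s_i(x_i-x_0)$; then $s_k$ ranges over $[0,1-\sigma]$ and
\[
\int_0^{1-\sigma} f^{(k)}\big(u+s_k(x_k-x_0)\big)\,ds_k \;=\; \frac{f^{(k-1)}\big(u+(1-\sigma)(x_k-x_0)\big) - f^{(k-1)}(u)}{x_k-x_0}
\]
by the fundamental theorem of calculus applied to $f^{(k-1)}$. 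A short computation shows that the $x_0$-dependence cancels in the first argument: $u+(1-\sigma)(x_k-x_0) = x_k + \sum_{i=1}^{k-1}s_i(x_i-x_k)$. Integrating both resulting terms over $S_{k-1}$ and applying the inductive hypothesis yields $D(x_0,\dots,x_k) = \big(D(x_k,x_1,\dots,x_{k-1}) - D(x_0,x_1,\dots,x_{k-1})\big)/(x_k-x_0) = \big(f[x_k,x_1,\dots,x_{k-1}] - f[x_0,\dots,x_{k-1}]\big)/(x_k-x_0)$. By the symmetry of divided differences in their arguments and the standard recursion $f[x_0,\dots,x_k] = \big(f[x_1,\dots,x_k] - f[x_0,\dots,x_{k-1}]\big)/(x_k-x_0)$, this equals $f[x_0,\dots,x_k]$, completing the induction.

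The issues requiring care are bookkeeping rather than conceptual. One must check that the stated regularity (namely $f^{(k-1)}$ continuous and piecewise differentiable, hence locally absolutely continuous) is exactly what makes the fundamental theorem of calculus available in the form $\int_a^b g' = g(b)-g(a)$ for $g = f^{(j)}$, $j\le k-1$, and that $f^{(k)}$ is locally integrable so Fubini applies; one must correctly re-index after the cancellation of $x_0$ so that the inductive hypothesis — which singles out a "base point" — can be invoked with base point $x_k$; and one must fix the normalization of $d\sigma$ as above, since using instead the unnormalized $k$-dimensional Hausdorff measure on $\Sigma_k\subset\R^{k+1}$ would introduce a factor of $\sqrt{k+1}$. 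I expect the normalization to be the only genuinely delicate point — it is the easiest place to introduce a stray constant — while the confluent case of coinciding nodes does not arise here since the $x_j$ are assumed distinct.
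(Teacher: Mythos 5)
Your proof is correct. Note that the paper does not actually prove this lemma: it is quoted as a standard result from interpolation theory with a reference to Schatzmann's textbook, so there is no in-paper argument to compare against. Your induction on $k$ --- integrating out the last barycentric coordinate via the fundamental theorem of calculus, observing that the base point $x_0$ cancels so the upper endpoint becomes the node $x_k$, and then invoking the symmetry and recursion of divided differences --- is the standard textbook proof of the Hermite--Genocchi formula, and every step checks out (the algebraic identity $u+(1-\sigma)(x_k-x_0)=x_k+\sum_{i=1}^{k-1}s_i(x_i-x_k)$ is verified correctly, and the regularity hypothesis, namely $f^{(k-1)}$ continuous and piecewise differentiable, is exactly what licenses the one-dimensional FTC step in each slice). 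You are also right that the normalization of $d\sigma$ is the one delicate point: the paper's subsequent use of the formula relies on $\mathrm{Vol}_k(\Sigma_k)=1/k!$ (stated immediately after the lemma), which is the pushforward-of-Lebesgue convention you adopt, not the $(k+1)$-dimensional-Hausdorff convention that would introduce a factor of $\sqrt{k+1}$; your choice is therefore consistent with how the lemma is applied in Claim 5.5 and Lemma 5.4.
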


Notice that $\Sigma_{k}$ is {a} $k$-dimensional simplex (embedded in $\R^{k+1}$), thus $\text{Vol}_{k}(\Sigma_{k})=1/k!$. The following lemma enables us to bound efficiently the {integral} which appears in the Hermite-Genocchi formula.

\begin{lem}\label{lem: bound}
Fix $k\in\N$, and let $\{x_j\}_{j=0}^{k}\subset I$ be the Chebyshev extrema points. Write $\Sigma_{k}$ for the simplex in \eqref{eq: simplex}.
Then there exists a non-negative continuous function $g_{k}\in C(I)$ so that for any $F\in L^1(I)$ we have
\begin{equation}\label{eq: for g}
 \int_{\Sigma_{k}}F \Big( t_0 x_0 +t_1x_1 +\dots t_{k} x_{k}\Big) d\sigma = \int_{-1}^{1} F(s)g_{k}(s) ds.
\end{equation}
Moreover, there exists $L>1$ such that for all $k\in\N$ and all $|s|\le 0.9$, we have $g_{k}(s)\ge \frac 1 {k! L^{k}}$.
\end{lem}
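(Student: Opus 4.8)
Lemma~\ref{lem: bound} asks for a density $g_k$ on $I$ pushing the simplex measure forward under the linear map $(t_0,\dots,t_k)\mapsto \sum_j t_j x_j$, together with a uniform lower bound $g_k(s)\ge \tfrac{1}{k!L^k}$ on $|s|\le 0.9$. The plan is to first establish existence and a formula for $g_k$ by a change-of-variables / disintegration argument, and then extract the lower bound by a direct geometric estimate on the fibers of the linear map.

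\textbf{Existence of $g_k$.} Since the Chebyshev extrema satisfy $x_0=-1<x_1<\dots<x_k=1$, the convex hull of $\{x_j\}$ is exactly $[-1,1]$, so the image of the simplex $\Sigma_k$ under $s(\underline t)=\sum_j t_j x_j$ is $[-1,1]$. Equation \eqref{eq: for g} just says that $g_k\,ds$ is the pushforward of the surface measure $d\sigma$ on $\Sigma_k$ under this linear functional; such a pushforward of an absolutely continuous (w.r.t.\ the $k$-dimensional Hausdorff measure on the affine hyperplane $\sum t_j=1$) finite measure under a non-constant linear map is absolutely continuous on $\R$, hence has an $L^1$ density $g_k\ge 0$, with $\int g_k = \mathrm{Vol}_k(\Sigma_k)=1/k!$. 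Concretely, by the coarea formula $g_k(s)$ is (a constant multiple of) the $(k-1)$-dimensional volume of the slice $\Sigma_k\cap\{s(\underline t)=s\}$ divided by $\|\nabla s\|$ restricted to the hyperplane, and this slice volume is a continuous, piecewise-polynomial function of $s$ (a ``B-spline''-type object). I would present this via the coarea formula to get the clean formula, then note continuity.

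\textbf{The uniform lower bound.} This is the main obstacle: one needs a bound that does \emph{not} degrade worse than geometrically in $k$. The key observation is that one can restrict attention to a favorable sub-simplex. Given $|s|\le 0.9$, pick three consecutive Chebyshev extrema $x_{j-1}<x_j<x_{j+1}$ whose convex hull contains a fixed-length interval around $s$; since consecutive Chebyshev extrema are spaced at least $\gtrsim 1/k^2$ apart near the endpoints but one only needs \emph{some} pair straddling $s$ within distance, say, $\tfrac12(1-0.9)=0.05$ of each other in spacing is false near $\pm1$ — so instead I'd choose two extrema $x_a<x_b$ with $x_a\le s\le x_b$ and $x_b-x_a$ bounded below by a universal constant (possible since on $[-0.9,0.9]$ the extrema, though possibly far apart, always have a pair with separation $\gtrsim 1$ available by taking $x_a,x_b$ not adjacent). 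Then restrict the integral in \eqref{eq: for g} to the sub-face $\{t_j=0 \text{ for } j\notin\{a,b\}\}$ fattened slightly: write $t_a=u$, $t_b=v$, and insert a factor supported where $u+v$ is close to $1$ and the remaining mass $1-u-v$ is distributed over a fixed small sub-simplex of the other coordinates. The integral over that fixed low-dimensional piece contributes a factor $\ge c_0^{k}/k!$ from the volume of a standard $(k-2)$-dimensional corner simplex of sidelength $c_0$, while the $u,v$-integral over the line segment $\{u x_a+v x_b=s,\ u,v\ge0\}$ contributes a factor bounded below by a universal constant because $x_b-x_a$ is bounded below. Multiplying gives $g_k(s)\ge \tfrac{1}{k!L^k}$ for a universal $L>1$.

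\textbf{Summary of steps, in order.} (1) Identify $g_k\,ds$ as the pushforward of $d\sigma$ under $\underline t\mapsto\sum t_j x_j$; invoke the coarea formula to get existence of a continuous density $g_k\in L^1(I)$ with total mass $1/k!$. (2) For fixed $|s|\le 0.9$, select two Chebyshev extrema $x_a<x_b$ with $x_a\le s\le x_b$ and $x_b-x_a\ge c_1$ for a universal $c_1>0$. (3) Lower-bound $g_k(s)$ by restricting the fiber-volume computation to configurations where almost all mass sits on the $u,v$ (i.e.\ $t_a,t_b$) coordinates, reducing to the product of a one-dimensional segment-length bound ($\ge$ universal constant, using $x_b-x_a\ge c_1$) and a $(k-2)$-dimensional fixed corner-simplex volume ($\ge c_0^{k-2}/(k-2)!$). (4) Combine to obtain $g_k(s)\ge 1/(k!L^k)$. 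The delicate point in (2)--(3) is ensuring the separation $x_b-x_a$ stays bounded below by a constant independent of $k$ and of $s\in[-0.9,0.9]$ — this holds because the Chebyshev extrema always include points within $O(1/k)$ of any target in $[-1,1]$, so in particular one can find $x_a\in[-1,-0.95]$-ish below $s$... more simply, since the extrema are symmetric and dense, for any $s\in[-0.9,0.9]$ both $[s-0.05,s]$ and $[s,s+0.05]$ contain extrema once $k$ is large, and for small $k$ the claim is a finite check. I expect step (3), the careful bookkeeping of which fixed low-dimensional face of $\Sigma_k$ to fatten so that both the combinatorial volume factor and the segment factor come out right, to be the part requiring the most care.
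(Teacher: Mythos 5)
Your overall architecture matches the paper's: identify $g_k(s)$ with the $(k-1)$-dimensional volume of the fiber $\{\underline t\in\Sigma_k:\sum_j t_jx_j=s\}$, and lower-bound that volume by exhibiting a piece of the fiber that is a graph of a linear map over a small corner region in all but two of the coordinates, whose base volume is $\ge c^k/k!$. The existence part is fine. The gap is in your step (2)--(3), precisely at the point where the hypothesis $|s|\le 0.9$ must enter: once you fix the remaining coordinates $(t_j)_{j\ne a,b}$ in your fattened corner region, the two linear constraints determine $(t_a,t_b)$ uniquely (so there is no ``segment integral''; that factor is really the statement that a linear graph has volume at least its base), and you must verify that this unique solution satisfies $t_a\ge0$ and $t_b\ge0$. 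You never do this, and with your selection scheme it can genuinely fail: if $x_a,x_b$ are both chosen within $0.05$ of $s$, then writing $\delta=\sum_{j\ne a,b}t_j$ and $\epsilon=\sum_{j\ne a,b}t_jx_j$, nonnegativity requires $(1-\delta)x_a\le s-\epsilon\le(1-\delta)x_b$, and when $s=x_a$ the left inequality reads $\sum_{j\ne a,b}t_j(x_j-x_a)\le 0$, which is false whenever any $x_j>x_a$ carries mass. So the condition ``$x_a\le s\le x_b$ with $x_b-x_a$ bounded below'' is not sufficient; you need $s$ to sit in the interior of $[x_a,x_b]$ with a margin comparable to the total mass you allow on the other coordinates.

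The paper sidesteps all of this by always taking the two solved-for coordinates to be $t_0$ and $t_k$, i.e.\ the nodes $x_0=-1$ and $x_k=1$ (so the pair-selection discussion, which is the most confused part of your write-up, is unnecessary). Restricting $(t_1,\dots,t_{k-1})$ to $[0,\tfrac{1}{20(k-1)}]^{k-1}$, one solves explicitly $t_0=\tfrac12\bigl(1-s-\sum_{j=1}^{k-1}t_j(1-x_j)\bigr)$, and since $\sum_{j=1}^{k-1}t_j(1-x_j)\le\tfrac{2(k-1)}{20(k-1)}=\tfrac1{10}\le 1-s$ for $|s|\le0.9$ (and symmetrically for $t_k$), nonnegativity holds; the graph over the box then gives $g_k(s)\ge(20(k-1))^{-(k-1)}\ge 1/(k!L^k)$. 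If you replace your pair $(x_a,x_b)$ by $(x_0,x_k)=(-1,1)$ and add this nonnegativity verification, your argument closes; as written, the key inequality is unproved. (A minor further point: the base region has dimension $k-1$, not $k-2$ as in your step (3), though this does not affect the final form of the bound.)
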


\begin{proof}
If $k=1$ the statement is true with $g_1(s)\equiv\tfrac {1}{2}$. For $k>1$, the function
\[
g_{k}(s) = \text{Vol}_{k-1} \Big(\Big\{(t_0,t_1,\dots,t_{k}): \: \sum_j t_j x_j=s,\, \sum_j t_j=1,\, \forall i \ t_i\ge 0 \Big\}\Big)
\]
satisfies \eqref{eq: for g} with any $F\in L^1 (I)$, due to a change of variables (and Fubini's theorem).
It is clear that $g_k$ is non-negative and continuous on $I$.

For $s\in I=[-1,1]$ define the set
\[
A_s = \left\{(t_0,\dots, t_{k}): \;  \sum_{j=0}^{k} t_j=1, \ \sum_{j=0}^{k} t_j x_j =s,\  (t_1,\dots, t_{k-1})\in\left[0,\frac 1 {20(k-1)}\right]^{k-1} \right\}
\]
Recalling that $x_0=-1$ and $x_k=1$ we notice that, given $(t_1,\dots,t_{k-1}) \in [0,1/(20(k-1))]^{k-1}$, there is a unique pair $(t_0,t_{k})\in\R^2$ that satisfies the independent linear equations
\begin{equation}\label{eq: sys}
\begin{cases}
\sum_{j=0}^{k} t_j=1,   \\
\sum_{j=0}^{k} t_j x_j =s,
\end{cases}
\iff
\begin{cases}
t_0+t_{k}=a\\
{-}t_0 {+} t_{k}=b,
\end{cases}
\end{equation}
where $a,b$ are numbers (depending on $t_1,\dots t_{k-1}$).
Therefore, $A_s$ is a $(k-1)$-dimensional manifold (embedded in $\R^{k+1}$), which may be viewed as the graph of a linear function on the domain
$\left[0,\tfrac 1 {20(k-1)}\right]^{k-1}$. This implies that $\text{Vol}_{k-1}(A_s) \ge \left(\frac 1{20(k-1)}\right)^{k-1} \ge  \frac 1 {k! L^{k}}$ for some $L>0$.

It remains to check that if $|s|\le 0.9$, for any $(t_1,\dots,t_{k-1}) \in\left[0,\frac 1 {20(k-1)}\right]^{k-1}$ the solution to \eqref{eq: sys} obeys $t_0\ge 0, \ t_{k}\ge 0$ (allowing us to conclude that $g_{k}(s) \ge \text{Vol}_{k-1}(A_s)$). Solving explicitly we have

\[
t_0 =
\frac{1}{2} \Big(1-s-\sum_{j=1}^{k-1}t_j (1-x_j)\Big) ,
\]
so that $t_0\ge 0$ if and only if $\sum_{j=1}^{k-1}t_j (1-x_j) \le 1-s$. We verify this as follows:
\[
\sum_{j=1}^{k-1}t_j (1-x_j) \le \frac {2} {20(k-1)}\cdot(k-1) = \frac 1{10} \le 1-s ,
\]
where we used that $|s|\le 0.9$. The verification of $t_{k}\ge 0$ is obtained in a symmetric way.
\end{proof}

Now we return to prove Claim \ref{clm: leading con}.

\begin{proof}[Proof of Claim \ref{clm: leading con}]
By using Lemma~\ref{lem: HG} and then applying Lemma~\ref{lem: bound} with $F=f^{(k)}$, we get that:
\begin{align*}
f[x_0,x_1,\dots,x_k] &=\int_{\Sigma_k} f^{(k)}\Big( t_0 x_0 +t_1x_1 +\dots t_k x_k\Big) dt
\\& \ge \frac 1 {k! L^k} \int_{-0.9}^{0.9} f^{(k)}(s) ds
\\ & \ge \frac 1{L^k}.
\end{align*}
where $L>0$ is a universal constant.
\end{proof}

\subsection{Discrete time}

In this section we prove Theorem \ref{thm: anal} in discrete time using an interpolation method.
We denote the discrete derivative operator by $\Delta$ that is,
\begin{equation}\label{eq: Delta}
(\Delta f)(n) = f(n+1)-f(n).
\end{equation}
When applied iteratively $k$ times we write $\Delta^k$.

A somewhat stronger version of Theorem \ref{thm: anal} over $\Z$ (similar to Proposition \ref{thm: anal contra}) is the following.
\begin{prop}\label{thm: anal contra dis}
Let $k,N\in \N$ be two numbers which satisfy $N\ge k$. Let $f: \Z \to \R$ be such that {$\Delta^{(k)}f(n) >0$} for every $n\in {[-N,N]}$.
Denote $I_N=[-\tfrac 9{10}N, \tfrac{9}{10}N]\cap\Z$ and suppose that
$ \frac 1 {N} \sum_{{n\in I_N} } f^{(k)}(n) \ge{\frac{k!}{N^k}}$.
Then $\displaystyle\sup_{[-2N,2N]} |f| \ge \frac 1{c^k}$, where $c>0$ is a universal constant.
\end{prop}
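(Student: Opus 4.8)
\section*{Proof proposal}

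The plan is to follow the architecture of the continuous Proposition~\ref{thm: anal contra} --- bounding $\sup|f|$ below by the leading coefficient of a polynomial interpolating $f$ at well-chosen integer nodes --- after first disposing, by a crude estimate, of the range in which $N$ is comparable to $k$. For that range, expanding $\Delta^k f(n)=\sum_{i=0}^k\binom{k}{i}(-1)^{k-i}f(n+i)$ and noting that $n,\dots,n+k\in[-N,N+k]\subseteq[-2N,2N]$ gives $|\Delta^k f(n)|\le 2^k\sup_{[-2N,2N]}|f|$ for every $n\in I_N$; together with $|I_N|\le 3N$, the hypothesis $\tfrac1N\sum_{n\in I_N}\Delta^k f(n)\ge k!/N^k$, and $k!\ge(k/e)^k$, this yields $\sup_{[-2N,2N]}|f|\ge\tfrac13(2eN/k)^{-k}$, which is $\ge c^{-k}$ as soon as $N\le C_0k$. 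From now on I assume $N\ge C_0 k$ with $C_0$ a large universal constant.

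The heart of the argument is a lower bound on a divided difference of $f$ at equally-spaced integer nodes --- one should \emph{not} use the scaled Chebyshev extrema $N\cos(j\pi/k)$, since these cluster near $\pm N$ at scale $N/k^2$ and so are not separated on $\Z$ unless $N\gtrsim k^2$. Put $h=\lfloor 2N/k\rfloor$ and $y_j=-N+jh$ for $j=0,\dots,k$, so $y_0=-N$, $y_k\in[N-k,N]$, and consecutive nodes differ by $h\ge C_0$. With $E$ the unit shift and $I$ the identity, the step-$h$ difference factors as $\Delta_h:=E^h-I=(I+E+\cdots+E^{h-1})\Delta$, whence $\Delta_h^k=\bigl(\sum_{i=0}^{h-1}E^i\bigr)^k\Delta^k$, i.e.\ $\Delta_h^k g(a)=\sum_{j\ge0}w_j\,\Delta^k g(a+j)$ where the $w_j\ge0$ are the coefficients of $(1+x+\cdots+x^{h-1})^k$ --- the (log-concave, unimodal) distribution of a sum of $k$ i.i.d.\ uniform variables on $\{0,\dots,h-1\}$, so that $\sum_j w_j=h^k$, the $w_j$ are supported on $\{0,\dots,k(h-1)\}$, and their standard deviation is $\asymp h\sqrt k$. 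Taking $a=-N$, each $-N+j$ with $j\le k(h-1)\le 2N-k$ lies in $[-N,N]$, so all terms $\Delta^k f(-N+j)$ are positive; and since $N\ge C_0 k$, the image of $I_N$ under $n\mapsto n+N$ lies inside the support and within $O(\sqrt k)$ standard deviations of the mode, so a large-deviation (Cram\'er-type) lower bound on these coefficients gives $w_{n+N}\ge h^{k-1}c_1^{-k}$ for every $n\in I_N$. Therefore
\[
\Delta_h^k f(-N)\ \ge\ \Bigl(\min_{n\in I_N}w_{n+N}\Bigr)\sum_{n\in I_N}\Delta^k f(n)\ \ge\ h^{k-1}c_1^{-k}\,\frac{k!}{N^{k-1}},
\]
and, using the divided-difference identity $f[y_0,\dots,y_k]=\Delta_h^k f(-N)/(h^k k!)$ together with $h\le 2N/k$, this gives $a:=f[y_0,\dots,y_k]\ge(c_1N)^{-k}$.

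To convert this into an estimate of $\sup|f|$ I would record the discrete analogue of Claim~\ref{clm: min norm}: if a monic degree-$k$ polynomial $R$ alternates in sign on integers $y_0<\cdots<y_k$, then every monic degree-$k$ polynomial $Q$ satisfies $\max_j|Q(y_j)|\ge\min_j|R(y_j)|$ --- the same sign-counting argument via the intermediate value theorem. I take $R(x)=\prod_{i=1}^k(x-m_i)$ with $m_i$ the midpoint of $(y_{i-1},y_i)$; then $\sgn R(y_j)=(-1)^{k-j}$ and $|y_j-m_i|=\tfrac h2|2(j-i)+1|$, so $\min_j|R(y_j)|=\min_j(h/2)^k(2j-1)!!\,(2(k-j)-1)!!\ge(N/(4e))^k$ by Stirling (the minimum occurring near $j=k/2$). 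Applying the extremal lemma with $Q=P/a$, where $P$ interpolates $f$ at the $y_j$ and $a>0$ is its leading coefficient, yields
\[
\sup_{[-2N,2N]}|f|\ \ge\ \max_j|f(y_j)|\ =\ \max_j|P(y_j)|\ \ge\ a\min_j|R(y_j)|\ \ge\ (c_1N)^{-k}(N/(4e))^k\ =\ (4ec_1)^{-k},
\]
since every $y_j\in[-N,N]$. Taking for $c$ the larger of the two constants so produced finishes the proof.

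The step I expect to be the main obstacle is the large-deviation lower bound on the ``bulk'' coefficients $w_{n+N}$, $n\in I_N$, together with the bookkeeping that keeps the image of $I_N$ inside the support of the $w_j$ while all the relevant $\Delta^k f(n)$ remain positive: this is exactly where the buffer between $I_N=[-\tfrac9{10}N,\tfrac9{10}N]\cap\Z$ and the edge of $[-N,N]$ is consumed, and where the hypothesis $N\ge k$ (strengthened to $N\ge C_0 k$) is genuinely needed. A secondary point, worth isolating as a lemma, is that the Chebyshev extrema of the continuous argument cannot be transported to $\Z$ directly and must be replaced by the equally-spaced family above.
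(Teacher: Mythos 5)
Your outline is sound and it takes a genuinely different route from the paper. The paper does not attempt any discrete interpolation at all: it smooths $f$ into a continuous function $F(x)=\sum_n f(n)B_{k+1}(x-n)$ using B-splines $B_k=\ind_{[-1/2,1/2]}^{*k}$, uses the identity $F^{(k)}(x)=\sum_n \Delta^k f(n)B_1(x-n)$ to transfer both the positivity of $\Delta^k f$ and the averaged lower bound on it to $F^{(k)}$ on $[-N+1,N-1]$, applies the continuous Proposition~\ref{thm: anal contra} to $F$, and finishes with the partition-of-unity property $\sum_n B_{k+1}(x-n)=1$, which makes $F(x)$ a convex combination of the $f(n)$ with $|n-x|\le (k+1)/2$ and hence gives $\sup F\le \sup_{[-N-k,N+k]}f\le\sup_{[-2N,2N]}f$ (this is where $N\ge k$ is used). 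That reduction is a few lines. Your argument is instead self-contained in discrete time: equally spaced nodes $y_j=-N+jh$, the factorization $\Delta_h^k=(I+E+\cdots+E^{h-1})^k\Delta^k$, a lower bound on the divided difference $f[y_0,\dots,y_k]=\Delta_h^kf(-N)/(h^kk!)$, and a midpoint-product substitute for the Chebyshev extremal property. Your observation that scaled Chebyshev extrema cannot be used on $\Z$ is correct and is precisely the difficulty the paper's B-spline device sidesteps.

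The one place where your proposal is not yet a proof is the step you yourself flag: the pointwise lower bound $w_{n+N}\ge h^{k-1}c_1^{-k}$ for all $n\in I_N$ on the coefficients of $(1+x+\cdots+x^{h-1})^k$. The claim is true (the relevant indices sit at a mean-per-summand bounded between $0.05(h-1)$ and $0.95(h-1)$, where the Cram\'er rate function of the uniform law on $\{0,\dots,h-1\}$ is bounded by a universal constant, and a tilted local CLT supplies the $h^{-1}$ prefactor uniformly in $h$), but it is a genuine quantitative local limit theorem that must be proved or precisely cited, with uniformity in both $k$ and $h$; as written it is asserted rather than established. The remaining steps check out: the crude bound disposing of $N\le C_0k$, the positivity of all terms in $\Delta_h^kf(-N)$ (the support of the $w_j$ lands in $[-N,N-k]$), the sign alternation and Stirling estimate for $R(x)=\prod_i(x-m_i)$, and the intermediate-value extremal lemma are all correct. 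So the architecture works, at the price of a nontrivial probabilistic lemma that the paper's approach avoids entirely.
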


\begin{proof}

{Let $B_0(x) =\ind_{\big[-\tfrac 1 2,\tfrac 1 2\big)}(x)$ and  $B_k(x) = (B_0) ^{*k}(x)$, where $(B_0) ^{*k}$ is the convolution of $B_0$ with itself $k$ times (e.g. $B_1 = B_0*B_0$).} Define the continuous time function:
\[
F(x) = \sum_{n\in\Z} f(n) B_{k+1}(x-n).
\]
 Observe that $B_k$ takes values in $[0,1]$, is finitely supported, is a piecewise degree-$k$ polynomial, and $\sum_{n\in \Z} B_k(x-n)=1$ for all $x\in\R$.
The derivative of $F$ is given by the following relation (see \cite{dB}*{eq. (10.3)}):
\[
F'(x)=\frac{d}{dx}\left( \sum_{n\in\Z} f(n) B_{k+1}(x-n)\right) = \sum_{n\in \Z} (\Delta f)(n) B_k (x-n),
\]
where $\Delta$ is the discrete derivative (recall \eqref{eq: Delta}).
By repeating this $k$ times we get:
\[
F^{(k)}(x) =\sum_{n\in \Z} \D^{k} f(n) B_1 (x-n).
\]

Since $\D^k f(n)>0$ for all $n\in{[-N,N]}\cap \Z$, {we have} $F^{(k)}(x) > 0$ for all $x\in [-N+1,N-1]$.
Moreover, we have,
\[ \frac 1 N \int_{{-0.9N}}^{{0.9N}} F^{(k)}\ge  \frac 1 {2N} \sum_{ n\in [-0.9N,0.9N]\cap\Z} \D^k f(n) \ge  \frac 1 2 \frac{k!}{N^k}.\]
Therefore, by Proposition \ref{thm: anal contra} we deduce that
\begin{align*}
\sup_{n\in [-2N,2N]\cap\Z} f(n)\ge
 \sup_{n\in {[-N-k,N+k]}\cap \Z} f(n)\geq \sup_{x\in[-N+1,N-1] } F(x) \ge \frac{1}{c^k},
\end{align*}
for some universal number $c$, as required. This concludes the proof.
\end{proof}

\vspace{15pt}
{\sc{Acknowledgments:}} 

N.F. acknowledges the supports of NSF postdoctoral fellowship MSPRF-1503094 held at Stanford.
O.F. acknowledges the support of NSF grant DMS-1613091 while holding a postdoctoral position at Stanford.
S.N. acknowledges the support of NSF grant DMS-1600726.

We are grateful to Mikhail Sodin, for introducing us to the project and encouraging us to work together.
We thank Amir Dembo, for useful discussions, advice and support.
We thank Sasha Sodin for pointing out that Chebyshev polynomials may be useful for proving Theorem \ref{thm: anal}, and for referring us to \cite{Bern} and \cite{Ganz}.
Finally, we thank the reviewers for their useful suggestions and comments and especially for suggesting us to include Proposition~\ref{cor: leading}.


\begin{center}
  {\sc References}
\end{center}
\begin{biblist}[\normalsize]

\bib{AT}{book}{
	author={Adler, R.J.},
	author={Taylor, J.E.},
	title={Random Fields and Geometry},
	publisher={Springer},
	series={Monographs in Mathematics},
	year={2007}
}

\bib{ABMO}{article}{
	author={Antezana, J.},
	author={Buckley, J.},
	author={Marzo, J.},
	author={Olsen, J.},
	title={Gap probabilities for the cardinal sine},
	journal={Journ. Math. Anal. Appl.},
	volume={396},
	number={2},
	date={2012},
	pages={466--472}
}

\bib{AS}{article}{
	author={Aurzada, F.},
	author={Simon, T.},
	title=	{Persistence probabilities and exponents},
	journal={In: L\'evy Matters V. Functionals of L\'evy processes. Lect. Notes Math.},
	volume={2149},
	date={2015},
	pages={183--224}
}

\bib{Bern}{article}{
	author={Bernstein, S.N.},
	title={Extremal properties of polynomials and the best approximation of continuous functions of a single real variable},
	journal={State United Scientific and Technical Publishing House, Moscow},
	date={1937},
	note={(in Russian.)}
}

\bib{BT}{article}{
	author={Bourgain, J.},
	author={Tzafriri, L.},
	title={Invertibility of ``large'' submatrices with applications to the geometry of Banach spaces and harmonic analysis},
	journal={Israel J. Math.},
	volume={57},
	date={1987},
	pages={137–224}
}

\bib{dB}{article}{
	author={C. de Boor},
	title={B(asic)-Spline Basics},
	journal={MRC Technical Summary Report \# 2952},
	date = {1986},
	note={Available at: http://ftp.cs.wisc.edu/Approx/bsplbasic.pdf}
}

\bib{BSW}{article}{
	author= {Borichev, A.},
	author={Sodin, M.},
	author={Weiss, B.},
 	title={Spectra of stationary processes on $\Z$},
 	book={
 		title={50 Years with Hardy Spaces},
 		editor={Baranov A., Kisliakov S., Nikolski N.},
 		year={2018},
 		series={Operator Theory: Advances and Applications},
 		volume={261}
 		},
	pages={141--157}
 	}

\bib{BMS}{article}{
	author={Bray, A.J.},
	author={Majumdar, S N.},
	author={Schehr, G.},
	title={Persistence and First-Passage Properties in Non-equilibrium Systems},
	journal={Advances in physics},
	volume={62 (3)},
	date={2013},
	pages={225--361}
}

\bib{DM1}{article}{
	author={Dembo, A.},
	author={Mukherjee, S.},
	title={No zero-crossings for random polynomials and the heat equation},
	journal={Ann. Probab.},
	date={2015},
	volume={43 (1)},
	pages={ 85--118}
}

\bib{DM2}{article}{
	author={Dembo, A.},
	author={Mukherjee, S.},
	title ={Persistence of Gaussian processes: non-summable correlations},
	journal = {Probability Theory and Related Fields},
	volume={169},
	date={2017},
	pages={1007--1039}
}

\bib{EN}{article}{
	author={Eremenko, A.},
	author={Novikov, D.},
	title={Oscillation of Fourier integrals with a spectral gap},
	journal={J. Math. Pures Appl.},
	volume={83},
	date={2004},
	pages={ 313--365}
	}

\bib{FF}{article}{
    	author={Feldheim, N.D.},
	author={Feldheim, O.N.},
	title={Long gaps between sign-changes of Gaussian Stationary Processes},
	journal={Inter. Math. Res. Notices},
	date= {2015},
	volume={11},
	pages={3012--3034}
}

\bib{FFJNN}{article}{
    	author={Feldheim, N.},
	author={Feldheim, O.},
	author={Jaye, B.},
	author={Nazarov, F.},
	author={Nitzan, S.},
	title={On the probability that a Stationary Gaussian Process with spectral gap remains non-negative on a long interval},
	journal={Inter. Math. Res. Notices},
	date= {2018},
	pages={rny248}
}

\bib{Ganz}{article}{
	author={Ganzburg, M.I.},
	title={Polynomial inequalities on measurable sets and their applications},
	journal={Constructive Approxiamtion},
	date={2001},
	volume={17},
	pages={275--306}
	}

\bib{GAFbook}{book}{
	author={Hough, J.B.},
	author={Krishnapur, M.},
	author={Peres, Y.},
	author={Virag, B.},
	title={Zeroes of Gaussian analytic functions and determinantal processes},
	series={University Lecture Series},
	volume={51},
	publisher={American Mathematical Society},
	year={2009}
}

\bib{Kah}{book}{
	author={Kahane, J.-P.},
	title={Some random series of functions},
	series={Cambridge Studies in Advanced Mathematics},
	volume={5},
	publisher = {Cambridge University Press},
	year = {1993},
	edition={second edition}
}

\bib{Katz}{book}{
	author= {Katzenlson, Y.},
	title={An Introduction to Harmonical Analysis},
	year={2004},
	publisher={Cambridge University Press},
	edition={third edition}
}

\bib{KK}{article}{
	author={Krishna, M.},
	author={Krishnapur, M.},
	title={Persistence probabilities in centered, stationary, Gaussian processes in discrete time},
	journal={Indian Journal of Pure and Applied Mathematics},
	date={2016},
	volume={47},
	number={2},
	pages={183--194}
}

\bib{LM}{article}{
	author={Latala, R.},
	author={Matlak, D.},
	title={Royen's proof of the Gaussian correlation inequality},
	book={
		Title={Geometric Aspects of Functional Analysis: Israel Seminar (GAFA) 2014--2016},
		publisher={Springer},
		address={Cham},
		editor={Klartag, B.},
		editor={Milman, E.},
		date={2017},
	},
	pages={265--275}
}

\bib{Li-Shao}{article}{
	author={Li, W. V.},
	author={Shao, Q.-M.},
	title={Gaussian processes: inequalities, small ball probabilities and applications},
	book={
		title={Stochastic processes: theory and methods},
		series={Handbook of Statistics},
		volume={19},
		editor={C.R. Rao and D. Shanbhag},
		publisher={Elsevier},
		address={New York},
		year={2001}
		},
	pages={533--598},
	}

\hbadness=10000
\bib{Lif}{book}{
author={Lifshits, M.A.},
title={Gaussian Random Functions},
publisher={Springer, Netherlands},
series = {Mathematics and Its Applications},
volume={322},
year={1995}
}

\bib{ineq}{book}{
	author={Lin, Z.},
	author={Bai, Z.},
	title={Probability Inequalities},
	edition={1st edition},
	publisher={Springer, Berlin},
	year={2011}
}

\bib{NR}{article}{
	author={Newell, G. F.},
	author={Rosenblatt , M.},
	title={Zero Crossing Probabilities for Gaussian Stationary Processes},
	journal={Annals Mathematical Statistics},
	volume={33 (4)},
	date={1962},
	pages={1306--1313}
	}

\bib{Roy}{article}{
	author={Royen, T.},
	title={A simple proof of the Gaussian correlation conjecture extended to multivariate gamma distributions},
	journal={Far East J. Theor. Stat.},
	volume={48},
	date={2014},
	pages={139--145}
}

\bib{Sch}{book}{
	author={Schatzmann, M.},
	title={Numerical Analysis: a mathematical introduction},
	publisher={Oxford - Clarendon Press},
	date={2002}
}

\bib{ScMa}{article}{
	author={Schehr, G.},
	author={Majumdar, S.N.},
	title={Real roots of random polynomials and zero crossing properties of diffusion equation},
	journal={Journal of Statistical Physics},
	volume={132},
	year={2008},
	pages={235--273}
	}

\bib{Slep}{article}{
	author={Slepian, D.},
	title={The one-sided barrier problem for Gaussian noise},
	journal={Bell Systems Technical Journal},
	volume={41},
	date={1962},
	pages={463\ndash 501}
}

\bib{Sodin}{article}{
	author={Sodin, M.},
	title={private communication}
	}

\bib{Ver}{article}{
	author = {Vershynin, R.},
	title={Coordinate restrictions of linear operators in $\ell^n_2$},
	date={2000},
	note={arXiv preprint. arXiv:math/0011232}
}

\end{biblist}

\bigskip

\bigskip


  N.~Feldheim, \textsc{
Department of Mathematics, Bar-Ilan University 
}\par\nopagebreak
  \textit{E-mail address}: \texttt{
naomi.feldheim@biu.ac.il
}

\medskip

O.~Feldheim, \textsc{Einstein Institute for Mathematics, the Hebrew University}
\par\nopagebreak
  \textit{E-mail address}: \texttt{
ohad.feldheim@mail.huji.ac.il
}

\medskip

S.~Nitzan, \textsc{School of Mathematics, Georgia Institute of Technology}
\par\nopagebreak
  \textit{E-mail address}: \texttt{
shahaf.nitzan@math.gatech.edu
}

\end{document}